    \pgfmathsetlength{\pgf@xb}{\pgfkeysvalueof{/pgf/outer xsep}}%
    \pgfmathsetlength{\pgf@yb}{\pgfkeysvalueof{/pgf/outer ysep}}%
\colorlet{symbols}{blue!90!black}
\colorlet{testcolor}{green!60!black}
\colorlet{darkblue}{blue!60!black}
\colorlet{darkgreen}{green!60!black}
\def\symbol#1{\textcolor{symbols}{#1}}
\def\1{\mathbf{\symbol{1}}}
\def\X{\symbol{X}}
\def\drawx{\draw[-,solid] (-3pt,-3pt) -- (3pt,3pt);\draw[-,solid] (-3pt,3pt) -- (3pt,-3pt);}
\tikzset{
	root/.style={circle,fill=testcolor,inner sep=0pt, minimum size=2mm},
	dot/.style={circle,fill=black,inner sep=0pt,minimum size=1mm},
	int/.style={circle,fill=black,draw=black,inner sep=0pt,minimum size=0.7mm},
	circ/.style={circle,draw=black,inner sep=0pt, minimum size=1mm},
	var/.style={circle,fill=black!10,draw=black,inner sep=0pt, minimum size=2mm},
	dotred/.style={circle,fill=black!50,inner sep=0pt, minimum size=2mm},
	generic/.style={semithick,shorten >=1pt,shorten <=1pt},
	oddfunc/.style={generic, dotted},
	dist/.style={ultra thick,draw=testcolor,shorten >=1pt,shorten <=1pt},
	testfcn/.style={ultra thick,testcolor,shorten >=1pt,shorten <=1pt,<-},
	testfunction/.style={ultra thick,testcolor,shorten >=1pt,shorten <=1pt},
	testfcnx/.style={ultra thick,testcolor,shorten >=1pt,shorten <=1pt,<-,
		postaction={decorate,decoration={markings,mark=at position 0.6 with {\drawx}}}},
	kprime/.style={semithick,shorten >=1pt,shorten <=1pt,densely dashed,->},
	kprimex/.style={semithick,shorten >=1pt,shorten <=1pt,densely dashed,->,
		postaction={decorate,decoration={markings,mark=at position 0.4 with {\drawx}}}},
	kernel/.style={semithick,shorten >=1pt,shorten <=1pt,->,draw=black},
	Pkernel/.style={ultra thick,shorten >=1pt,shorten <=1pt,->,draw=blue},
	PkernelBig/.style={very thick,shorten >=1pt,shorten <=1pt,decorate, draw=blue, decoration={zigzag,amplitude=1.5pt,segment length = 3pt,pre length=2pt,post length=2pt}},
	multx/.style={shorten >=1pt,shorten <=1pt,
		postaction={decorate,decoration={markings,mark=at position 0.5 with {\drawx}}}},
	kernelx/.style={semithick,shorten >=1pt,shorten <=1pt,->,
		postaction={decorate,decoration={markings,mark=at position 0.4 with {\drawx}}}},
	kernel1/.style={->,semithick,shorten >=1pt,shorten <=1pt,postaction={decorate,decoration={markings,mark=at position 0.45 with {\draw[-] (0,-0.1) -- (0,0.1);}}}},
	kernel2/.style={->,semithick,shorten >=1pt,shorten <=1pt,postaction={decorate,decoration={markings,mark=at position 0.45 with {\draw[-] (0.05,-0.1) -- (0.05,0.1);\draw[-] (-0.05,-0.1) -- (-0.05,0.1);}}}},
	kernelBig/.style={semithick,shorten >=1pt,shorten <=1pt,decorate, decoration={zigzag,amplitude=1.5pt,segment length = 3pt,pre length=2pt,post length=2pt}},
	kernelBigg/.style={thick,shorten >=1pt,shorten <=1pt,decorate, decoration={zigzag,amplitude=3.5pt,segment length = 7pt,pre length=2pt,post length=2pt}},
	kernelBigg1/.style={thick,shorten >=1pt,shorten <=1pt,decorate, decoration={saw,amplitude=3.5pt,segment length = 7pt,pre length=2pt,post length=2pt}},
	kernelBigg2/.style={thick,shorten >=1pt,shorten <=1pt,decorate, decoration={bumps,amplitude=3.5pt,segment length = 7pt,pre length=2pt,post length=2pt}},
	rho/.style={dotted,semithick,shorten >=1pt,shorten <=1pt},
	renorm/.style={shape=circle,fill=white,inner sep=1pt},
	labl/.style={shape=rectangle,fill=white,inner sep=1pt},
cumu2n/.style={inner sep=3pt},
cumu2/.style={draw=red!80,fill=red!40},
cumu3/.style={regular polygon, regular polygon sides=3,draw=red!80,rounded corners=3pt,fill=red!40,minimum size=5mm},
cumu4/.style={regular polygon, regular polygon sides=4,draw=red!80,rounded corners=3pt,fill=red!40,minimum size=7mm},
cumu5/.style={regular polygon, regular polygon sides=5,draw=red!80,rounded corners=3pt,fill=red!40,minimum size=7mm},
bcumu2n/.style={inner sep=3pt},
bcumu2/.style={draw=blue!80,fill=blue!40},
bcumu3/.style={regular polygon, regular polygon sides=3,draw=blue!80,rounded corners=3pt,fill=blue!40,minimum size=5mm},
bcumu4/.style={regular polygon, regular polygon sides=4,draw=blue!80,rounded corners=3pt,fill=blue!40,minimum size=7mm},
bcumu5/.style={regular polygon, regular polygon sides=5,draw=blue!80,rounded corners=3pt,fill=blue!40,minimum size=7mm},
	xi/.style={circle,fill=symbols!10,draw=symbols,inner sep=0pt,minimum size=1.2mm},
	xix/.style={crosscircle,fill=symbols!10,draw=symbols,inner sep=0pt,minimum size=1.2mm},
	xib/.style={circle,fill=symbols!10,draw=symbols,inner sep=0pt,minimum size=1.6mm},
	xibx/.style={crosscircle,fill=symbols!10,draw=symbols,inner sep=0pt,minimum size=1.6mm},
	not/.style={circle,fill=symbols,draw=symbols,inner sep=0pt,minimum size=0.5mm},
	>=stealth,
	}
\def\DeclareSymbol#1#2#3{\expandafter\gdef\csname MH@symb@#1\endcsname{\tikz[baseline=#2,scale=0.13,draw=symbols]{#3}}\expandafter\gdef\csname MH@symb@#1s\endcsname{\scalebox{0.7}{\tikz[baseline=#2,scale=0.15,draw=symbols]{#3}}}}
\def\<#1>{\csname MH@symb@#1\endcsname}
\def\sXi{\symbol{\Xi}}
\newtheorem{example}[lemma]{Example}
\newtheorem{assumption}[lemma]{Assumption}
\let\I\CI
\def\s{\mathfrak{s}}
\def\KK{\mathfrak{K}}
\let\eref\eqref
\def\Ren{\mathscr{R}}
\def\MM{\mathscr{M}}
\def\TT{\mathscr{T}}
\def\${|\!|\!|}
\def\Wick#1{\,\colon\!\! #1 \colon}
\def\E{\mathbf{E}}
\def\emptyset{\mathop{\centernot\ocircle}}
\def\diam{\mathrm{diam}}
\begin{document}

\title{Moment bounds for SPDEs with non-Gaussian fields and application to the Wong-Zakai problem}
\author{Ajay Chandra$^1$ and Hao Shen$^2$}
\institute{University of Warwick, UK, \email{ajay.chandra@gmail.com}
\and Columbia University, US, \email{pkushenhao@gmail.com}}

\maketitle

\begin{abstract}
Upon its inception the theory of regularity structures \cite{Regularity} allowed for the treatment for many semilinear perturbations of the stochastic heat equation driven by space-time white noise. When the driving noise is non-Gaussian the machinery of the theory can still be used but must be combined with an infinite number of stochastic estimates in order to compensate for the loss of hypercontractivity, as was done in \cite{CLTKPZ}. In this paper we obtain a more streamlined and automatic set of criteria implying these estimates which facilitates the treatment of some other problems including non-Gaussian noise such as some general phase coexistence models \cite{Phi4Weijun}, \cite{Shen-Xu} - as an example we prove here a generalization of the Wong-Zakai Theorem found in \cite{WongZakai}.
\end{abstract}

\setcounter{tocdepth}{2}
\tableofcontents

\section{Introduction}
In the paper \cite{WongZakai} the main focus was the convergence of smooth approximations $u_{\epsilon}$ to the solution of the SPDE
\begin{equ}\label{eq: Main SPDE}
\partial_{t} u = \partial_{x}^{2} u + H(u) + G(u)\xi.
\end{equ}
Here $u(t,x)$ is a function from $\R_{+} \times S^{1}$ to $\R$, $H,G: \R \rightarrow \R$ are respectively twice and five-times continuously differentiable and $\xi$ denotes space-time white noise. One can immediately obtain a solution $u$ to \eqref{eq: Main SPDE} by viewing it as an infinite dimensional It\^o integral equation in time. 

The fundamental obstacle to interpreting \eqref{eq: Main SPDE} without stochastic calculus is the irregularity of $\xi$. The smooth approximations $u_{\epsilon}$ satisfy the above equation with $\xi$ replaced by $\xi_{\eps} := \xi \ast \rho_{\eps}$ where $\rho_{\eps}$ is a mollifier converging to a space-time delta function as $\eps \downarrow 0$. More concretely the authors of \cite{WongZakai} set
\[
\rho_{\eps}(t,x) := \eps^{-3} \rho( \eps^{-2}t,\eps^{-1}x),
\]
where $\rho:\R^{2} \mapsto \R$ is an even, smooth, compactly supported function which integrates to $1$. Let $u_{\eps}$ denote the classical solution to the equation driven by $\xi_{\eps}$.

Unsurprisingly, the $u_{\eps}$ \emph{do not} converge to $u$ in general. One already sees this in finite dimensions where the Wong-Zakai Theorem 
\cite{WongZakai-1,WongZakai-2} (for more recent progress c.f. \cite{KaratzasRuf2013} and references therein)
states that smooth approximations to an SDE converge to the \emph{Stratonovich} solution to the SDE which in general differs from the It\^o solution. 
Of course this discrepancy can be cured by ``renormalizing'' the SDE by inserting a Stratonovich-It\^o correction term into the mollified SDEs. The main result of \cite{WongZakai} is the corresponding result for the SPDE setting.
\begin{theorem} \label{theo:WZgaussian}
(Hairer-Pardoux \cite{WongZakai}) 
Assume that $H$ and $G$ are of classes $\CC^2$ and $\CC^5$ respectively, both with bounded first derivatives. Let $\rho$ and $\xi_{\eps}$ be as above. Let $u$ denote the It\^o solution to \eqref{eq: Main SPDE} with some initial condition $u(0,\cdot) \in \CC(S^{1})$.
Then there exist finite $\eps$ independent constants $C^{(1)}_{\rho},c_{\rho}^{(1)},$ and $c_{\rho}^{(2)}$ such that the following holds: If $u_\eps$ is the classical solution 
to the random PDE 
\begin{equ}[e:SPDEapprox]
\d_t u_\eps = \d_x^2 u_\eps + \bar H(u_\eps) - \frac{C^{(1)}_{\rho}}{\eps} G'(u_\eps)G(u_\eps) + 
G(u_\eps)\,\xi_\eps\;,
\end{equ}
where
\begin{equ}[e:defbarH]
\bar H(u) \eqdef H(u) - c_\rho^{(1)} G'(u)^3G(u) - c_\rho^{(2)}G''(u)G'(u)G^2(u)\;.
\end{equ}
with initial condition $u_\eps(0,\cdot) = u(0,\cdot)$ then for any $T > 0$, one has
\begin{equ}
\lim_{\eps \to 0} \sup_{(t,x) \in [0,T] \times S^1} |u(t,x) - u_\eps(t,x)| = 0\;,
\end{equ}
in probability. For any $\alpha \in (0,{1\over 2})$ and $t > 0$, the 
restriction of $u_\eps$ to
$[t,T] \times S^1$ converges to $u$ in probability for the topology
of $\CC^{\alpha/2,\alpha}([t,T]\times S^1)$ as $\eps \downarrow 0$.
\end{theorem}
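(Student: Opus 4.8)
The plan is to realize \eqref{eq: Main SPDE} inside a regularity structure, solve the renormalized mollified equations there, and pass to the limit using the continuity of the solution map. In the parabolic scaling on $\R\times S^1$ space-time white noise has H\"older regularity $-\tfrac32-\kappa$, so after one application of the heat kernel the solution sits at regularity $\tfrac12-\kappa$; consequently $G(u)\xi$ is a product of distributions of negative total regularity and is classically ill-defined, which forces both the restriction $\alpha\in(0,\tfrac12)$ in the statement and the need for renormalization. First I would build the regularity structure $\mathscr{T}$ generated by the noise symbol $\Xi$ and the abstract integration map $\CI$ under the rule dictated by the right-hand side $\bar H(u)+G(u)\Xi$, truncated compatibly with the regularity of $H$ (class $\CC^2$) and of $G$ (class $\CC^5$); only finitely many symbols then have negative degree, and all of them are built from $\Xi$ and $\CI$. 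I would recast the equation as the abstract fixed point
\begin{equ}
  U \;=\; \mathcal{P}\,\mathbf{1}_{+}\bigl(\hat H(U)+\hat G(U)\,\Xi\bigr)\;+\;\mathcal{P}u_0
\end{equ}
in a weighted space $\mathcal{D}^{\gamma,\eta}$ of modelled distributions, where $\hat H,\hat G$ are the lifts of $H,G$ to the modelled-distribution algebra, $\mathcal P$ the abstract heat-kernel convolution, and the weight $\eta$ absorbs the $t\downarrow 0$ behaviour dictated by $u_0\in\CC(S^1)$. The general fixed point theorem of the theory then gives a unique maximal solution, which the bounded-derivative hypotheses on $G$ (together with the model bounds below) upgrade to existence on all of $[0,T]$.

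Second, for the smooth noise $\xi_\eps$ I would form the canonical lift $Z_\eps=(\Pi^\eps,\Gamma^\eps)$ and then construct the finite-dimensional renormalization group acting on $\mathscr{T}$ generated by contractions of the negative-degree trees: the tree with symbol $\CI(\Xi)\,\Xi$, whose expectation produces the $\eps^{-1}$ divergence, together with the handful of marginally negative trees (degree $\to 0$) producing finite $\eps$-independent constants. I would choose $M_\eps$ in this group so that $\hat Z_\eps:=M_\eps Z_\eps$ converges in probability in the space of admissible models. This is the analytic heart of the argument: using the Gaussian nature of $\xi$ — the Wiener chaos decomposition to organize the moment computations tree by tree, hypercontractivity to promote the requisite second-moment bounds to bounds on all moments, and the graphical bounds of the theory for the kernel integrals — one verifies the uniform estimates and identifies the subtracted quantities, which are computed explicitly from the heat kernel and $\rho$; crucially the evenness of $\rho$ forces several otherwise-present constants to vanish, leaving exactly $C^{(1)}_\rho/\eps$ and the two finite constants $c^{(1)}_\rho,c^{(2)}_\rho$.

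Third, I would translate the action of $M_\eps$ on the model into a modification of the PDE: renormalizing by contracting a tree $\tau$ inserts into the right-hand side the constant $c_\tau$ times the elementary differential of the nonlinearity associated with $\tau$. The tree $\CI(\Xi)\,\Xi$ occurs in $\hat G(U)\Xi$ with coefficient $G'(u)G(u)$, so it produces precisely the term $-\tfrac{C^{(1)}_\rho}{\eps}G'(u)G(u)$ of \eqref{e:SPDEapprox}; the marginal trees occur with coefficients $G'(u)^3G(u)$ and $G''(u)G'(u)G^2(u)$ and produce the two finite counterterms in \eqref{e:defbarH}. Hence the reconstruction of the solution of the abstract fixed point driven by $\hat Z_\eps$ is exactly the classical solution $u_\eps$ of \eqref{e:SPDEapprox}. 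By continuity of the solution map (fixed point composed with reconstruction) with respect to the model, $u_\eps=\mathcal{R}_{\hat Z_\eps}U_\eps$ converges in probability to $\bar u:=\mathcal{R}_{\hat Z}U$ in $\CC^{\alpha/2,\alpha}([t,T]\times S^1)$ for every $\alpha<\tfrac12$ and $t>0$, with the convergence uniform on all of $[0,T]\times S^1$ thanks to the $\eta$-weighted topology near the initial time.

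It then remains to identify $\bar u$ with the It\^o solution $u$; I would do this by comparing $\hat Z_\eps$ with the model built from a mollification that keeps the equation solvable by It\^o calculus — for instance a purely spatial mollification, whose solution converges to the It\^o solution $u$ by classical SPDE theory — and checking that the two limiting models coincide, equivalently that the renormalization constant $C^{(1)}_\rho/\eps$ is exactly the It\^o--Stratonovich correction; uniqueness of the It\^o solution then closes the argument. The main obstacle is the second step: establishing the all-moment stochastic estimates for every negative-degree tree and extracting and simplifying the renormalization constants is by far the most delicate and computation-heavy part, with the clean identification of the limiting model with the one corresponding to the It\^o solution a close second.
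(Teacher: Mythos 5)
This theorem is not proved in the present paper at all---it is quoted from Hairer--Pardoux \cite{WongZakai}---and your sketch follows essentially the same regularity-structures route that \cite{WongZakai} takes (and that this paper adapts to non-Gaussian noise): the abstract fixed point in $\CD^{\gamma,\eta}$, the canonical lift of $\xi_\eps$, a renormalization map $M_\eps$ acting on the finitely many negative-degree trees, graphical second-moment bounds promoted to all moments by Gaussian hypercontractivity, the renormalized equation obtained by tracking how $M_\eps$ acts on the expansion of $\hat G(U)\Xi$, and continuity of the solution map in the model. The one point where you genuinely deviate is the final identification of the limit: you propose comparing with a purely spatial mollification solvable by classical It\^o calculus and ``checking that the two limiting models coincide'', whereas \cite{WongZakai} (and this paper, via the diagonal argument of Theorem~\ref{theo:diagonal}) constructs the It\^o model directly, characterized by adaptedness and the It\^o-integral identity \eqref{e:Itointegral}, proves convergence of the renormalized models to it, and then invokes \cite[Corollary~6.3]{WongZakai} to identify the reconstructed solution with the classical It\^o solution. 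Your alternative is plausible in spirit, but the coincidence of the two limiting models is precisely the nontrivial content that the It\^o-model construction supplies, so as written that step is asserted rather than argued and is the weakest link in an otherwise faithful outline.
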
 
We defer exact formulae for the renormalization constants; they can be explicitly written as integrals involving the heat kernel and $\rho$. The $\frac{1}{\eps}$ term in \eref{e:SPDEapprox} is exactly the It\^o-Stratonovich correction term which diverges as expected - there is no notion of infinite dimensional analog of Stratonovich integration. The definition of $\bar{H}(u)$ also involves two finite renormalizations which are chosen so that it is precisely the It\^o solution to which the $u_{\eps}$ are converging. In fact, along the proof \cite{WongZakai} obtains a natural notion of solution to \eqref{eq: Main SPDE} which is pathwise - an analogous situation as in \cite{MR2314753} (on rough paths) and \cite{MaxSam} (on evolution equations).

In \cite[Remark~1.7]{WongZakai} Hairer and Pardoux ask if an analogous statement can be proven if one replaces the mollified space-time white noise $\xi_{\eps}(t,x)$ with $\eps^{-3/2}\zeta(\eps^{-2}t,\eps^{-1}x)$ where $\zeta$ is a {\it non-Gaussian} random field which is supported on smooth functions and satisfies a central limit theorem.  
They conjectured that in addition to the renormalization seen in the Gaussian case one would see additional terms of order $\eps^{-\frac12}$. 

The question of \cite[Remark~1.7]{WongZakai} is our point of departure. 
Let $\zeta$ be stationary, centered, generically non-Gaussian random field on $\R^{2}$ which is almost surely continuous and for which all cumulants\footnote{See Appendix~\ref{sec:app} for definition of cumulants.} exist and are exponentially decaying\footnote{See Definition \ref{def: exp decay condition on cumulants}.}. We also assume that $\zeta$ is normalized $\int \zeta(0)\zeta(z)\,dz=1$.
Let $\zeta^{(\eps)}$ be a random field on $\R\times [-\frac{1}{2\eps},\frac{1}{2\eps}]$ which is a periodization of $\zeta$ (see Remark~\ref{rem:periodization}).
We then set
\begin{equ} \label{e:def-zeta-eps}
\zeta_{\eps}(t,x) := \eps^{-3/2}\zeta^{(\eps)}(\eps^{-2}t,\eps^{-1}x).
\end{equ}
Our result is then the following, which is proved using the theory 
of regularity structures developed in \cite{Regularity}.
\begin{theorem} \label{theo:main}
Let $\zeta$ be as above and $H$ and $G$ be as in Theorem~\ref{theo:WZgaussian} and as before $u$ be the It\^o solution to \eqref{eq: Main SPDE} started from some initial condition $ u(0,\cdot) \in \CC(S^{1})$.
Then there exist constants $\{C^{(i)}_{\zeta}\}_{i=1}^{3}$ and $\{c^{(j)}_{\zeta}\}_{j=1}^{4}$, all of which are independent of the parameter $\eps$, such that the following holds: Suppose that $u_\eps$ is the classical solution 
to the random PDE 
\begin{equs}[e:SPDEapprox-NG]
	\d_t u_\eps = \d_x^2 u_\eps +  &\bar H(u_\eps) 
	- \frac{C^{(1)}_{\zeta}}{\eps} G'(u_\eps)G(u_\eps) \\
	&- \frac{C^{(2)}_{\zeta}}{\eps^{1/2}} G'(u_\eps)^2 G(u_\eps) - \frac{C^{(3)}_{\zeta}}{\eps^{1/2}} G''(u_\eps) G^2(u_\eps) 
	+ G(u_\eps)\,\zeta_\eps\;,
\end{equs}
where
\begin{equs}[e:defbarH-NG]
	\bar H(u) \eqdef H(u)  &
	- {1\over 6} c^{(1)}_{\zeta} G'''(u) G^3(u)
- c^{(2)}_{\zeta} G'(u)^3G(u)  \\
&- \Big( {1\over 2} c^{(3)}_{\zeta}
+c^{(4)}_{\zeta}\Big) G''(u) G'(u) G^2(u) \;
\end{equs}
and $u_\eps$ is started with the initial condition $u(0,\cdot) \in \CC(S^1)$.

Then for every $T>0$,  the family of random functions $u_{\eps}$
converges in law to the It\^o solution $u$  as $\eps \downarrow 0$,
with initial data $u_0$ in the space $\CC^{\alpha/2,\alpha}([0,T]\times S^1)$,
for any $\alpha\in(0,{1\over 2})$.
\end{theorem}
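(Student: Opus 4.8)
The plan is to set this up as an application of the general convergence machinery of regularity structures, exactly as in \cite{WongZakai}, with the one genuinely new ingredient being the stochastic estimates for the non-Gaussian model built from $\zeta_\eps$. First I would identify the regularity structure and the model: the equation \eqref{eq: Main SPDE} has the same subcriticality exponents as the Gaussian Wong--Zakai problem (the noise $\zeta_\eps$ lives at the same parabolic regularity $-\frac32-\kappa$ as $\xi_\eps$ by the scaling \eqref{e:def-zeta-eps} and the normalization $\int\zeta(0)\zeta(z)\,dz=1$), so the fixed-point space, the set of trees $\mathcal{T}$, and the abstract solution map are \emph{identical} to those in \cite{WongZakai}. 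What changes is only the list of trees that must be renormalized: because $\zeta$ is not Gaussian, Wick-type cancellations no longer kill odd-order contributions, so trees of negative homogeneity that vanished for white noise — notably those producing the new $\eps^{-1/2}$ counterterms $G'(u)^2G(u)$ and $G''(u)G^2(u)$ and the finite $G'''(u)G^3(u)$ term — now survive. I would enumerate precisely this finite list of trees $\tau$ with $|\tau|<0$, read off the corresponding renormalization maps, and thereby pin down the constants $\{C^{(i)}_\zeta\}_{i=1}^3$ and $\{c^{(j)}_\zeta\}_{j=1}^4$ as the (divergent or finite) expectations of the associated stochastic objects; the recentering/renormalized-equation computation that translates the algebraic renormalization group action into the concrete counterterms in \eqref{e:SPDEapprox-NG}--\eqref{e:defbarH-NG} is then a routine, if lengthy, transcription of the analogous computation in \cite{WongZakai}, using that $H\in\CC^2$, $G\in\CC^5$.

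The analytic heart is then to prove that the renormalized canonical models $\hat Z_\eps$ (built from $\zeta_\eps$ with the above renormalization) converge in probability (hence in law) to the Gaussian Itô model $\hat Z$ associated with $\xi$. This is where the main results established earlier in the paper — the streamlined criteria for the infinite family of stochastic estimates needed in the absence of hypercontractivity — do the work: one verifies the moment-bound hypotheses (exponential decay of all cumulants of $\zeta$, Definition~\ref{def: exp decay condition on cumulants}, together with the normalization and stationarity) and concludes uniform-in-$\eps$ bounds $\E\,\|\Pi^\eps_z\tau\|^p\lesssim \lambda^{p(|\tau|+\kappa)}$ and the analogous bounds for differences, for \emph{all} $p$, for every tree $\tau$ in the (finite) basis of the structure. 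The convergence $\hat Z_\eps\to\hat Z$ then follows by showing that the $\eps\to 0$ limit of each renormalized stochastic object agrees with its Gaussian counterpart — the non-Gaussian cumulants of order $\ge 3$ contribute terms that are subleading in $\eps$ by the CLT scaling, so the limiting model is Gaussian and equals the Itô model precisely because the finite renormalizations in $\bar H$ were chosen for that purpose (again mirroring \cite{WongZakai}, where the two finite constants $c^{(1)}_\rho,c^{(2)}_\rho$ play exactly this role).

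With model convergence in hand, the conclusion is assembled from the abstract black boxes: the solution map of the regularity structure is continuous (jointly in the model and the initial data, on any bounded time interval, by the standard fixed-point argument, using that the nonlinearity built from $H,G$ is smooth enough and $G$ has bounded first derivative so the fixed point exists up to a deterministic time and then globally by the a priori control from the Itô theory), and the reconstruction of the modelled distribution solved against $\hat Z_\eps$ coincides with the classical PDE solution $u_\eps$ because $\hat Z_\eps$ is a (renormalized) \emph{canonical} model coming from the smooth field $\zeta_\eps$ — this identification is exactly the content of the analogue of the corresponding statement in \cite{WongZakai}, and it is where the counterterms in \eqref{e:SPDEapprox-NG} materialize as the classical renormalization of the mollified equation. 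Composing continuity of the solution map with $\hat Z_\eps\to\hat Z$ gives $u_\eps\to u$ in $\CC^{\alpha/2,\alpha}([0,T]\times S^1)$ in law for every $\alpha\in(0,\frac12)$.

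The step I expect to be the main obstacle is the model-convergence estimate: verifying that the new criteria apply to \emph{every} tree in the structure (including the higher-order trees that are analytically irrelevant for existence of the fixed point but still must be controlled for the model to converge), and in particular organizing the cumulant expansion of $\E\,\langle\Pi^\eps_z\tau,\psi^\lambda_z\rangle^p$ so that the exponential decay of cumulants yields integrable bounds uniformly in $\eps$, and so that all non-Gaussian ($\ge 3$rd order) cumulant contributions are shown to vanish in the limit — the bookkeeping here is the genuinely non-Gaussian part of the argument and the reason the general moment-bound framework of this paper is needed; by contrast the algebraic setup and the final assembly are essentially parallel to \cite{WongZakai}.
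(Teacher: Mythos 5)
Your overall skeleton coincides with the paper's: the regularity structure, abstract fixed point problem and solution theory are taken over verbatim from \cite{WongZakai}; the new divergent/finite trees are enumerated and the constants $\{C^{(i)}_{\zeta}\}$, $\{c^{(j)}_{\zeta}\}$ are read off from them; the uniform-in-$\eps$ moment bounds for all $p$ are supplied by the elementary-graph criteria of this paper (Assumption~\ref{ass:ele-graph} together with Theorems~\ref{theo:big-bd} and~\ref{theo:ele-graph}, applied symbol by symbol as in Proposition~\ref{prop:mombound}); the renormalized equation \eqref{e:SPDEapprox-NG}--\eqref{e:defbarH-NG} is obtained from the expansion \eqref{e:RHS} and the pointwise values of $\hat\Pi^{(\eps)}_z\tau$; and the conclusion is assembled from continuity of the solution map and the identification of the reconstruction under the It\^o model with the It\^o solution. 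Up to this point your proposal matches the paper.

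The genuine gap is in the step where you identify the limiting model. You assert that the renormalized models converge ``in probability (hence in law)'' to the It\^o model because the cumulants of order $\ge 3$ are ``subleading by the CLT scaling''. First, $\zeta_\eps$ and the white noise $\xi$ are not defined on a common probability space, so convergence in probability of $\hat Z_\eps$ to $\hat Z$ is not meaningful; this is precisely why Theorem~\ref{theo:main} is stated as convergence in law, in contrast to the convergence in probability of Theorem~\ref{theo:WZgaussian}. Second, and more substantially, the uniform moment bounds give uniform control of the models but do not by themselves identify the limit as the It\^o model, and in the non-Gaussian setting one cannot simply compute the $\eps\to 0$ limit of each renormalized object ``against'' the It\^o model (no chaos decomposition or hypercontractivity is available, and the It\^o model is characterized by a martingale/adaptedness property, cf.\ \eqref{e:Itointegral}). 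The paper closes this by a diagonal argument (Proposition~\ref{prop:mombound} and Theorem~\ref{theo:diagonal}): introduce the doubly mollified noise $\zeta_{\eps,\bar\eps}=\rho_{\bar\eps}*\zeta_\eps$ and the correspondingly renormalized models $\hat Z_{\eps,\bar\eps}$; prove the difference bound $\E|(\hat\Pi^{(\eps)}_x\tau-\hat\Pi^{(\eps,\bar\eps)}_x\tau)(\varphi^\lambda_x)|^p\lesssim\bar\eps^{\kappa}\lambda^{p(|\tau|+\eta)}$ uniformly for $\eps$ small; for fixed $\bar\eps$, use the central limit theorem for $\zeta_\eps$ (convergence in law to $\xi$ in a negative H\"older space, as in \cite{CLTKPZ}) together with continuity of the noise-to-model map on $\CC^1$ noise to get $\hat Z_{\eps,\bar\eps}\to\hat Z_{0,\bar\eps}$; and finally invoke the Gaussian result of \cite{WongZakai} for $\hat Z_{0,\bar\eps}\to\hat Z$ as $\bar\eps\to 0$. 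Without this intermediate Gaussian comparison (or an equivalent device, e.g.\ convergence of all joint moments of the model to those of the It\^o model together with a determinacy argument), your sentence ``the limiting model is Gaussian and equals the It\^o model'' is asserted rather than proved, and the choice of the finite constants in $\bar H$ cannot by itself substitute for this identification.
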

We now explictly specify the renormalization constants of appearing in the above theorem. We write
\begin{equs}  [e:def-consts]
C^{(1)}_{\zeta} &= C^{\<Xi2>}\qquad
C^{(2)}_{\zeta} = C^{\<Xi3>}\qquad
C^{(3)}_{\zeta} = C^{\<Xi3b>} \\
c^{(1)}_{\zeta} &= C^{\<Xi4b>} \qquad
c^{(2)}_{\zeta} = C_{1}^{\<Xi4>} +C_{2}^{\<Xi4>} +C_{3}^{\<Xi4>} \\
c^{(3)}_{\zeta} &= C^{\<Xi4c>}  \qquad
c^{(4)}_{\zeta} = C_{1}^{\<Xi4e>}+C_{2}^{\<Xi4e>}+C_{3}^{\<Xi4e>}
\end{equs}
where the new constants appearing above are defined by the diagrammatic formulae
\begin{equ}
C^{\<Xi2>}= \!\!\!\!
\begin{tikzpicture}  [baseline=4]
\node[cumu2n]	(a)  at (0,0) {};	
\draw[bcumu2] (a) ellipse (14pt and 7pt);
\node[dot] (left) at (-.2,0) {};
\node[root] (right) at (.2,0) {};
\draw[Pkernel]   (left)  .. controls (-1,1) and (1,1) ..   (right);
\end{tikzpicture} 
\!\!\!\! \;,  \qquad
\!\!\!
C^{\<Xi3>} = \!\!\!\!
\begin{tikzpicture}  [baseline=0]
\node[bcumu3,minimum size=15mm,rounded corners=4mm] at (0,0) {};
\node[dot] (up) at (0,.2) {};
\node[dot] (left) at (-.2,-.2) {};
\node[root] (right) at (.2,-.2) {};
\draw[Pkernel]   (left)  .. controls (-1.5,0) and (-1,1) ..  (up);
\draw[Pkernel]  (up)	.. controls (1,1) and (1.5,0) ..  (right);
\end{tikzpicture} 
\!\!\!\! \;,   
\qquad
C^{\<Xi3b>} = \!\!\!\!
\begin{tikzpicture}  [baseline=0]	
\node[bcumu3,minimum size=15mm,rounded corners=4mm] at (0,0) {};
\node[root] (up) at (0,.2) {};
\node[dot] (left) at (-.2,-.2) {};
\node[dot] (right) at (.2,-.2) {};
\draw[Pkernel]   (left)  .. controls (-1.5,0) and (-1,1) ..  (up);
\draw[Pkernel]  (right)	.. controls (1.5,0) and (1,1) ..  (up);
\end{tikzpicture} 
\!\!\!\! \;, 
\end{equ}
\begin{equ}
C_{1}^{\<Xi4>} = \;
\begin{tikzpicture}  [baseline=-2]	
\node[cumu2n]	(a)  at (0,0) {};	
\draw[bcumu2] (a) ellipse (7pt and 14pt);
\node[cumu2n]	(b)  at (1,0) {};	
\draw[bcumu2] (b) ellipse (7pt and 14pt);
\node[dot] (ul) at (0,.2) {};
\node[dot] (ur) at (1,.2) {};
\node[dot] (dl) at (0,-.2) {};
\node[root] (dr) at (1,-.2) {};
\draw[Pkernel,bend left =70]   (ul) to  (ur);
\draw[Pkernel]  (ur) to (dl);
\draw[Pkernel,bend right=70]  (dl) to  (dr);
\end{tikzpicture} 
\;\;\;,
 \qquad
C_{2}^{\<Xi4>} = \;
\begin{tikzpicture}  [baseline=-2]	
\node[cumu2n]	(b)  at (1,0) {};	
\draw[bcumu2] (b) ellipse (7pt and 14pt);
\node[dot] (ul) at (0,.4) {};
\node[dot] (ur) at (1,.2) {};
\node[dot] (dl) at (0,-.4) {};
\node[root] (dr) at (1,-.2) {};
\draw[Pkernel,bend right=70]  (ur) to (ul);
\draw[PkernelBig]   (ul)  to  (dl);
\draw[Pkernel,bend right=70]  (dl) to  (dr);
\end{tikzpicture} 
\;\;\;,
 \qquad
C_{3}^{\<Xi4>} = \!\!
\begin{tikzpicture}  [baseline=0]	
\node[bcumu4,minimum size=15mm,rounded corners=4mm] at (0,0) {};
\node[dot] (ul) at (-.2,.2) {};
\node[dot] (ur) at (.2,.2) {};
\node[dot] (dl) at (-.2,-.2) {};
\node[root] (dr) at (.2,-.2) {};
\draw[Pkernel]   (dl) .. controls (-1.2,-.5) and (-1.2,.5) ..  (ul);
\draw[Pkernel]  (ul) .. controls (-.5,1.2) and (.5,1.2) ..  (ur);
\draw[Pkernel]  (ur) .. controls (1.2,.5) and (1.2,-.5) ..  (dr);
\end{tikzpicture}  
\;\;,
\end{equ}
\begin{equ}
C_{1}^{\<Xi4e>} = \;
\begin{tikzpicture}  [baseline=-2]	
\node[cumu2n]	(a)  at (0,0) {};	
\draw[bcumu2] (a) ellipse (7pt and 14pt);
\node[cumu2n]	(b)  at (1,0) {};	
\draw[bcumu2] (b) ellipse (7pt and 14pt);
\node[dot] (ul) at (0,.2) {};
\node[dot] (ur) at (1,.2) {};
\node[dot] (dl) at (0,-.2) {};
\node[root] (dr) at (1,-.2) {};
\draw[Pkernel,bend left =70]   (ul) to  (ur);
\draw[Pkernel]  (ur) to (dl);
\draw[Pkernel,bend left=70]  (dr) to  (dl);
\end{tikzpicture} 
\;\;\;,
\qquad
C_{2}^{\<Xi4e>} = \;
\begin{tikzpicture}  [baseline=-2]	
\node[cumu2n]	(b)  at (1,0) {};	
\draw[bcumu2] (b) ellipse (7pt and 14pt);
\node[dot] (ul) at (0,.4) {};
\node[dot] (ur) at (1,.2) {};
\node[dot] (dl) at (0,-.4) {};
\node[root] (dr) at (1,-.2) {};
\draw[Pkernel,bend right=70]  (ur) to (ul);
\draw[PkernelBig]   (ul)  to  (dl);
\draw[Pkernel,bend left=70]  (dr) to  (dl);
\end{tikzpicture} 
\;\;\;,
\qquad
C_{3}^{\<Xi4e>} = \!\!
\begin{tikzpicture}  [baseline=0]	
\node[bcumu4,minimum size=15mm,rounded corners=4mm] at (0,0) {};
\node[dot] (ul) at (-.2,.2) {};
\node[root] (ur) at (.2,.2) {};
\node[dot] (dl) at (-.2,-.2) {};
\node[dot] (dr) at (.2,-.2) {};
\draw[Pkernel]   (dl) .. controls (-1.2,-.5) and (-1.2,.5) ..  (ul);
\draw[Pkernel]  (ul) .. controls (-.5,1.2) and (.5,1.2) ..  (ur);
\draw[Pkernel]  (dr) .. controls (1.2,-.5) and (1.2,.5) ..  (ur);
\end{tikzpicture}
\;\;,
\end{equ}
\begin{equ}
C^{\<Xi4b>} = \;
\begin{tikzpicture}  [baseline=0]	
\node[bcumu4,minimum size=15mm,rounded corners=4mm] at (0,0) {};
\node[dot] (ul) at (-.2,.2) {};
\node[root] (ur) at (.2,.2) {};
\node[dot] (dl) at (-.2,-.2) {};
\node[dot] (dr) at (.2,-.2) {};
\draw[Pkernel]   (dl) .. controls (2,-1.5) and (1.8, 2) ..  (ur);
\draw[Pkernel]  (ul) .. controls (-.5,1.2) and (.5,1.2) ..  (ur);
\draw[Pkernel]  (dr) .. controls (1,-.5) and (1,.5) ..  (ur);
\end{tikzpicture} 
\!\!\!\!\!\!  \;,
\qquad
C^{\<Xi4c>} = \;
\begin{tikzpicture}  [baseline=0]	
\node[bcumu4,minimum size=15mm,rounded corners=4mm] at (0,0) {};
\node[dot] (ul) at (-.2,.2) {};
\node[root] (ur) at (.2,.2) {};
\node[dot] (dl) at (-.2,-.2) {};
\node[dot] (dr) at (.2,-.2) {};
\draw[Pkernel]   (ur) .. controls (1.8, 2) and (2,-1.5) ..  (dl);
\draw[Pkernel]  (ul) .. controls (-.5,1.2) and (.5,1.2) ..  (ur);
\draw[Pkernel]  (dr) .. controls (1,-.5) and (1,.5) ..  (ur);
\end{tikzpicture}  
\end{equ}
These diagrams represent integrals of various kernels. The black vertices 
\tikz \node[dot] at (0,0) {};
 represent integrated variables in $\R^{2}$ while the single green vertex
\tikz[scale=0.7] \node[root] at (0,0) {}; 
 represents $0 \in \R^{2}$. 
A blue arrow \tikz[baseline=-4] \draw[Pkernel] (0,0) to (1,0);
 corresponds to the heat kernel $P$ evaluated at the difference of its terminal vertex and initial vertex. 
 The lightly blue shaded regions (e.g. 
  \begin{tikzpicture}[scale=0.4,baseline=-4] 
 \node[bcumu4] at (0,0) {}; 
 \node at (-0.35,0.35) [dot] {}; 
 \node at (-0.35,-0.35) [dot] {}; 
 \node at (0.35,0.35) [dot] {}; 
 \node at (0.35,-0.35) [dot] {}; 
 \end{tikzpicture}
 ) should be thought of as ``hyperedges'', they represent 
 a cumulant of $\zeta $ evaluated at the positions of the vertices within the region. 
(See Appendix~\ref{sec:app} for definition of the n-th cumulants, which will be written as $\fC_n$.)
For example we have 
\[
C^{\<Xi3>}
=\ 
\int_{\R^{2}} 
\int_{\R^{2}}
{\rm d}z_{1}\ 
{\rm d}z_{2}\ 
P(z_{2} - z_{1})
P(0 - z_{2})
\mathfrak{C}_{3}(z_{1},z_{2},0).
\]
%
Finally, the notation 
	\tikz[baseline=-0.1cm] \draw[PkernelBig] (0,0) to (1,0); 
stands for a renormalized kernel. If the variables corresponding to its endpoints are $z_{1}$ and $z_{2}$ then it represents the kernel
\[
P(z_{1} - z_{2})\mathfrak{C}_{2}(z_{1} - z_{2}) - 
C^{\<Xi2>}\delta(z_{1} - z_{2}).
\]
\begin{remark} \label{rem:periodization}
In \eqref{e:def-zeta-eps} the field $\zeta^{(\eps)}$ on $\R\times [-\frac{1}{2\eps},\frac{1}{2\eps}]$ where $[-\frac{1}{2\eps},\frac{1}{2\eps}]$ is identified with the circle of length $\frac{1}{\eps}$ is said to be a periodization of $\zeta$, in the same sense as in \cite[Assumption~2.1]{CLTKPZ}, i.e. for every sufficiently small $\eps>0$, there exists a coupling of $\zeta^{(\eps)}$ and $\zeta$ such that for every $T > 0$ and every $\delta > 0$,
\begin{equ}  [e:coupling]
\sup_{|t| \le T\eps^{-2}}\sup_{|x| \le (1 - \delta)/(2\eps)} \lim_{\eps \to 0} \eps^{-3} \E |\zeta(t,x) - \zeta^{(\eps)}(t,x)|^2 = 0\;.
\end{equ}
As an example, 
let $\mu$ be a Poisson point process on $\R^2 \times [0,1]$ with uniform intensity measure,
let $\phi(t,x,a)$ be a continuous function bounded by $e^{-|t|-|x|}$, and set
\begin{equ}[e:defzeta]
\zeta(t,x) = \int_{\R^2 \times [0,1]} \phi(t-s, x-y, a) \mu(ds,dy,da)
	-   \mu(\phi) \;.
\end{equ}
Let $\mu^{(\eps)}$ be the periodic extension to $\R^2 \times [0,1]$ of 
a Poisson point process on $\R \times [-1/(2\eps),1/(2\eps)] \times [0,1]$ 
with uniform intensity measure,
and 
\[
\phi^{(\eps)}(t,x,a) \eqdef \phi(t,x,a) \tilde \phi_{\eps^{-1/4}} (t,x)
\]
 where $\tilde \phi_{\eps^{-1/4}} (t,x)$ is a continuous function 
that is equal to $1$ 
when $|x|<\eps^{-1/4}$ and $0$ when $|x|>2\eps^{-1/4}$,
 and let $\zeta^{(\eps)}$ be as 
in \eqref{e:defzeta}, with $\mu$ replaced by $\mu^{(\eps)}$ and $\phi$ replaced by $\phi^{(\eps)}$.
Then one can verify that for $\eps$ small enough, \eref{e:coupling} is satisfied, where 
the natural coupling between $\zeta$ and $\zeta^{(\eps)}$ is such that
$\zeta = \zeta^{(\eps)}$ on $\R \times [-\frac12 \eps^{-1}+\eps^{-\frac14}, \frac12 \eps^{-1}-\eps^{-\frac14}]$.
Note that the cumulants of $\zeta^{(\eps)}$ are allowed to have infinite range (but exponential decaying) in $t$.
\end{remark}
\begin{remark}
The same results as Corollaries~1.10, 1.11, 1.13 in \cite{WongZakai} (on local continuity of the solution with respect to the initial condition in pathwise sense, sharp regularity result for the solution etc.) can also be proved. Since the proofs follow along the same lines, we refrain from redoing them here.
\end{remark}
\subsection{Moment estimates for SPDE with non-Gaussian fields}
Much of our paper is spent developing a set of criterion for estimating moments of certain non-Gaussian random variables which will be written as $(\Pi_{0}\tau)(\phi_0^{\lambda})$ and defined in Section~\ref{sec:RS}. These random variables can be represented as rooted trees. Each of these random variables is a multilinear functional of the driving noise $\zeta_{\eps}$ and the tree describes how to write this functional as an integral - the edges correspond to kernels, at each leaf we have an occurrence of the driving noise, and all variables other than the root correspond to integrated space-time variables.
  

Roughly speaking\footnote{See \cite[Section~5]{HairerPhi4} for a more in depth discussion.}, the $p$-th moments of this random variable can be written as a graphical sum where one takes $p$ copies of this tree, identifies their roots, and then sums over all the possible ways of grouping the leaves into cumulants. 
For Gaussian noise only second cumulants appear but in general we have higher order cumulants which we view as hyperedges (edges incident to more than two vertices). We refer to this graphical sum as a cumulant expansion.

Estimates on moments are needed to establish regularity/homogeneity of certain random space-time processes via a generalization of the Kolmogorov continuity criterion, the random variable we are estimating is the analog of an ``increment'' of the process with the size of the increment given by a parameter $\lambda$. Our primary goal is, for each $p$, a bound of the type
\begin{equation}\label{e: desired moment bound}
\left|
\E \left[\left(\Pi_{0}\tau\right)(\phi_{0}^{\lambda})^{p}\right]
\right| \lesssim \lambda^{|\tau|p}
\end{equation}
uniform in $\lambda \in (0,1]$ and $\eps$ small where $|\tau|$ is determined by the structure of the tree. Here, for any continuous function $\phi:\R^{2} \mapsto \R$, $z = (t,x) \in \R^{2}$, and $\lambda > 0$,
\[
\phi^{\lambda}_{z}(\bar{t},\bar{x}) \eqdef
\lambda^{-3} \phi(\lambda^{-2}( \bar{t} -t), \lambda^{-1}( \bar{x}-x)).
\]
For the bound \eqref{e: desired moment bound} we need to estimate, for each of the larger hypergraphs appearing in the cumulant expansion of the given moment, a complicated convolution of kernels and cumulants. 

The paper \cite{KPZJeremy} developed criteria for graphs which guarantee that they satisfy the desired upper bound. Moreover by hypercontractivity (e.g. \cite[Lemma~10.5]{Regularity}) one has
\[
\E \left[\left(\Pi_{0}\tau\right)(\phi_{0}^{\lambda})^{p}\right] 
\le
C_{p} 
\left|
\E \left[\left(\Pi_{0}\tau\right)(\phi_{0}^{\lambda})^{2}\right]
\right|^{p/2}.
\]
Thus in the Gaussian case one needs \eqref{e: desired moment bound} only for $p=2$ which is obtained by checking a small number of graphs. 

When hypercontractivity is not available establishing \eqref{e: desired moment bound} for every $p$ involves estimating infinitely many hypergraphs\footnote{One actually only needs \eqref{e: desired moment bound} bound for $p$ large enough to facilitate a Borel-Cantelli argument - but if $p > 10$ this is already daunting.}. This problem was tackled in \cite{CLTKPZ} where a criterion for individual trees was given which implies that any larger graph built by merging $p$ copies of this tree satisfies the necessary criterion in \cite{KPZJeremy}. However the criteria of \cite{CLTKPZ} requires one to do some manipulation on the trees and the larger graphs one got after the merging: (i) cumulants had to be replaced by collections of edges and good factors had to be distributed according to ``epsilon allocation rules'' and (ii) so called ``positively - renormalized'' edges had to be estimated by hand on a case by case basis leading to more trees to check. 

In this paper we provide streamlined criteria for these trees, proving the sufficiency of these criterion will also be easier. By working with hyperedges issue (i) is avoided - this makes proving very general results like \cite{Shen-Xu} much easier. Handling the positively-renormalized edges automatically deals with issue (ii) which makes the treatment of the Wong-Zakai problem much easier.

In Section~\ref{sec:RS} we fix our regularity structures and formulate the abstract fixed point problem for the Wong-Zakai equation in a space of modeled distributions. 
In Section~\ref{sec:Bounds} we prove Theorem~\ref{theo:ele-graph} which states that certain criteria on the graphs yield the desired moment bounds. 
In Section~\ref{sec:AppWZ} we apply the results obtained in Section~\ref{sec:Bounds} to the Wong-Zakai problem and prove Theorem~\ref{theo:main}.
\subsection{Acknowledgements} AC and HS both thank Martin Hairer for numerous helpful conversations. AC also thanks Martin Hairer for financial support from his Leverhulme Trust award. 
HS was partially supported by the
NSF through DMS-1712684.
We also thank the reviewer for their close reading and helpful remarks.
\section{Regularity Structures} \label{sec:RS}
The moment estimates we prove will be used as input for the theory of regularity structures developed in \cite{Regularity} (see also \cite{KPZ}). This machinery allows us to go from these estimates to the construction of an actual solution to the SPDE in question along with convergence of regularized and renormalized solutions to this limiting solution. 

We refer readers looking for a detailed exposition to \cite{IntroRegularity},  \cite{FrizHairer}[Ch. 15], and \cite{CW}; our description of the theory will be quite brief. The most basic object in the theory is a \emph{regularity structure} which consists of a triple $(A,\mathcal{T}, \mathcal{G})$. The set $A \subset \R$ is a list of the possible homogeneities we allow in our expansions; it is assumed to be locally finite and bounded below. $\mathcal{T}$ is a graded vector space $\mathcal{T} = \oplus_{\alpha \in A} \mathcal{T}_{\alpha}$ where each $\mathcal{T}_{\alpha}$ is a Banach space with a distinguished basis. $\CG$ is a group of continuous linear transformations on $\mathcal{T}$ with the property that for all $\alpha \in A,\ \tau \in \mathcal{T}_{\alpha},$ and $\Gamma \in G$ one has $(\Gamma \tau - \tau) \in \oplus_{\beta < \alpha} \mathcal{T}_{\beta}$. A regularity structure is used to describe ``jets of abstract Taylor expansions''; the vector space $\mathcal{T}$ is the target space for the jets and the \emph{structure group} $\CG$ includes transformations on the target space corresponding to change of base-point operations.

\subsection{The Wong-Zakai regularity structure} \label{sec:WZsymbols}
The specific regularity structure we use for our Wong-Zakai type model is exactly the same as the one used in \cite{WongZakai};
 in particular $\mathcal{T}$ is spanned by a set of indeterminants $\tau \in \mathcal{W}$, each carrying a homogeneity $|\tau|$. We first define a a larger class of indeterminants and then take $\CW$ as an appropriate subset.

We start with the indeterminants $\mathbf{1}$, $X_{0}$, and $X_{1}$ which are the abstract counterparts of $1$, $t$, and $x$ - since our scaling is parabolic we set $|\mathbf{1}| = 0$, $|X_{0}| = 2$, and $|X_{1}| = 1$. Given a multi-index $k = (k_{0},k_{1}) \in \N^{2}$ we write $X^{k}$ as a shorthand for $X_{0}^{k_{0}}X_{1}^{k_{1}}$. For such $k$ we set $|k|_{\s} \eqdef 2k_{0} + k_{1}$ so $|X^{k}| = |k|_{\s}$. We also write $\bar{\CT}$ for the commutative algebra generated by $\mathbf{1}$, $X_{0}$, and $X_{1}$, the set of \emph{abstract polynomials}.

This set also carries a commutative, multiplicative structure (for which $\mathbf{1}$ will act as a unit). Given two indeterminants $\tau, \bar{\tau} $ we enforce that $|\tau \bar{\tau}| = |\tau| + |\bar{\tau}|$.  

We also introduce the indeterminant $\Xi$ which represents the driving noise $\xi$, we set $|\Xi| = -3/2 - \kappa$ where $\kappa$ is a fixed, small positive parameter \footnote{fixing $\kappa \in (0,1/10)$ suffices}. We also introduce an operator $\mathcal{I}(\cdot)$ on indeterminants, enforcing that $\mathcal{I}(X^{k})$ vanish for any multi-index $k$; if $\mathcal{I}(\tau) \not = 0$ we enforce $|\mathcal{I}(\tau)| = |\tau| + 2$. 

Define $\mathcal{U}$ to be the smallest collection of indeterminants which contains $\mathbf{1}$, $X_{0}$, and $X_{1}$ and that satisfies the conditions 
(i) $\tau \in \mathcal{U} \Rightarrow \mathcal{I}(\tau) \in \mathcal{U}$, (ii) $\tau \in \mathcal{U} \Rightarrow \mathcal{I}(\Xi \tau) \in \mathcal{U}$, and (iii) $\tau, \bar{\tau} \in \mathcal{U} \rightarrow \tau \bar{\tau} \in \mathcal{U}$. Finally we set
\begin{equ} \label{e:def-CW}
\mathcal{W} := \left\{ 
\tau \in \mathcal{U}
\cup \{\ \tau \Xi:\ \tau \in \mathcal{U} \}:\ 
|\tau| \le \frac{5}{2}
\right\}.
\end{equ}
$A$ is given by the set of homogeneities that appear in $\mathcal{W}$ which by \cite[Lemma~8.10]{Regularity} is bounded below and locally finite, also for each $\alpha \in A$ the vector space $\mathcal{T}_{\alpha}$ spanned by indeterminants of homogeneity $\alpha$ in $\CW$ is finite dimensional. 

Often we write the elements of $\CW$ as blue symbolic trees with $\sXi = \<Xi>$. Each occurrence of the abstract integration
map $\CI$ is then denoted by a downward 
straight line. The product $\tau$ and $\bar{\tau}$ is represented by attaching the trees for $\tau$ and $\bar{\tau}$ at the root.
For example, we have $\symbol{\Xi\I^2(\Xi)} = \<Xi22>$. Note that we never see an expression of the form $\Xi^{2}$ in $\CW$. 
We also use the shorthand
$\symbol{\Xi X_1} = \<XiX>$. 
The elements in $\CW$ with 
negative homogeneities are:
\begin{equs}[e:symbols]
|\<Xi> | = -{3\over 2} - \kappa \;, &\quad 
|\<Xi2>| =-1 - 2\kappa  \;, \quad
|\<Xi3>| = | \<Xi3b> | = -{1\over 2} - 3\kappa \;,\\
|\<XiX> | = -{1\over 2} - \kappa \;, \quad
| \<Xi4>| &=|\<Xi4b>|=| \<Xi4c>|= |\<Xi4e> |=- 4\kappa \;, \quad
| \<Xi2X>| =|\<XXi2> |=- 2\kappa\;.
\end{equs} 
Having defined the $\mathcal{T}$ of the Wong-Zakai regularity structure, now we turn to defining a structure group $\mathcal{G}$. To do this we introduce another set of indeterminants denoted $\CW_{+}$ and denote by $\CT_{+}$ the commutative algebra they generate. The construction of the structure group can be summarized as follows: there will be a \emph{single} ``abstract'' matrix of indeterminants from $\CT_{+}$ which acts on $\mathcal{T}$ - all the individual elements of $\mathcal{G}$ arise by specifying an appropriate map $f \in \CT_{+}^{\star}$ where $\CT_{+}^{\star}$ is the set of algebra homomorphisms from $\CT_{+}$ to $\R$. 

Following \cite{WongZakai} we set 
\[
\CW_{+}
\eqdef
\{ X_{0}, X_{1}\} \cup 
\left\{
\CJ_{k}(\tau):\ 
\tau \in \CW \setminus \bar{\CT},\ k \in \N^{2} \textnormal{ with } 
|k|_{\s} < |\tau| + 2
\right\}.
\]
Here the operators $\CJ_{k}(\cdot)$ on $\CW_{+}$ are analogous to $\CI[ \cdot ]$ on $\CW$. We use the convention that $\CJ_{k}(\tau) \eqdef 0$ if $|\tau| \le - 2 + |k|_{\s}$. 

The abstract matrix described earlier will be map $\Delta: \CT \mapsto \CT \otimes \CT_{+}$ which we now define recursively on $\CT$. The base cases are given by
\[
\Delta \mathbf{1} \eqdef \mathbf{1} \otimes \mathbf{1},\quad
\Delta \Xi \eqdef \Xi \otimes \mathbf{1},\quad
\textnormal{ and }\quad
\Delta X_{i} \eqdef X_{i} \otimes \mathbf{1} + \mathbf{1} \otimes X_{i} \;.
\] 
We then recursively set
\[
\Delta (\tau \bar{\tau})
\eqdef \Delta(\tau) \cdot \Delta(\tau) \quad
\textnormal{ and } \quad
\Delta \CI(\tau) \eqdef
(\CI \otimes {\rm Id})
\Delta \tau
+
\sum_{
l,k \in \N^{2}
}
\frac{X^{l}}{l!} \otimes \frac{X^{k}}{k!}
\CJ_{l+k}(\tau) .
\]
The product on the RHS of the first definition is component-wise. Also note that the sum in the second definition only has finitely many non-vanishing terms.

Given any $f \in \CT_{+}^{*}$ we define a linear transformation $\Gamma_{f}: \CT \mapsto \CT$ by setting
\begin{equation}\label{structure group def}
\Gamma_{f}\tau \eqdef 
({\rm Id} \otimes f) \Delta \tau.
\end{equation}
We then define $\mathcal{G}$ to be the set of all linear transformations of the above form. The only non-trivial thing is to check is that $\mathcal{G}$ forms a group. This is done by equipping $\CT_{+}$ with a Hopf-algebra structure for which $\Delta$ serves as a comodule coproduct - we refer the curious reader to \cite[Section~8]{Regularity}. 
\subsection{Admissible models and renormalized models}
\label{sec:Adm-model}
It is convenient to replace the heat kernel $P$ with a truncation $K:\R^{2} \setminus \{0\} \mapsto \R$ with the following properties: (i) $K(z)$ vanishes for $|z| > 1$, (ii) for  $|z| < 1/2$, $K(z) = P(z)$, (iii) $K$ is smooth on $\R^{2} \setminus \{0\}$, 
and (iv) 
$\int_{\R^{2}}{\rm d}z\ K(z)z^{k} = 0$ for any multi-index $k$
 with $|k|_{\s} < 3$.

The existence of such a $K$ is not hard to show, see \cite[Section~5]{Regularity}, and we consider it fixed for the rest of the paper.
We now have everything in place to define the set of \emph{admissible} 
models $\mathscr{M}$ on the Wong-Zakai regularity structure. 

A \emph{model} is a pair of maps $(\Pi,\Gamma)$ with $\Pi:\R^{2} \mapsto \mathcal{L}(\CT,\CS'(\R^{d}))$ which we write $z \mapsto \Pi_{z}$ and $\Gamma: \R^{2} \times \R^{2} \mapsto \CG$ which we write $(z,\bar{z}) \mapsto \Gamma_{z \bar{z}}$. Here $\mathcal{L}(\CT,\CS'(\R^{2}))$ is the space of linear maps from $\CT$ into the space of tempered distributions $\CS'(\R^{2})$. These maps are required to satisfy the algebraic conditions
\begin{equation}\label{eqs: alg req for model}
\Pi_{z} \Gamma_{z\bar{z}} = \Pi_{\bar{z}} \quad \textnormal{and} \quad
\Gamma_{z\bar{z}} \Gamma_{\bar{z} z'} = \Gamma_{z z'}
\textnormal{  for any } z,\bar{z},z' \in \R^{2}.
\end{equation} 

Let $\mathcal{B}$ be the set of all $\phi \in \CS'(\R^{2})$ supported on the ball of radius $1$ and with $|D^{k}\phi| \le 1$ where $D$ denoted differentiation and $k \in \N^{2}$ with $|k|_{\s} \le 2$. We also require that models satisfy the analytic bounds


\begin{equation}\label{eqs: analytic req for model}
\sup_{z \in \mathfrak{K}} 
\left| 
(\Pi_{z}\tau)\left(\phi^{\lambda}_{z}\right) 
\right| 
\lesssim \lambda^{|\tau|} \quad \textnormal{ and } 
\quad
\sup_{
\substack{
z,\bar{z} \in \mathfrak{K}\\
z \not = \bar{z} 
}
} 
\| \Gamma_{z \bar{z}} \tau\|_{\alpha} \lesssim |z-\bar{z}|^{|\tau| - \alpha}
\end{equation}
for each compact set $\mathfrak{K} \subset \R^{2}$, uniformly over $\phi \in \mathcal{B}$, $\lambda \in (0,1]$, $\tau \in \CW$ and $\alpha \in A$. In the second bound $\| \tau \|_{\alpha}$ denotes the $\CT_{\alpha}$-norm of the $\CT_{\alpha}$ component of $\tau$.

The set of models can be equipped with a family of pseudometrics indexed by compact sets $\mathfrak{K} \subseteq \R^{2}$ - for two models $Z = (\Pi,\Gamma)$ and $\bar{Z} = (\bar{\Pi},\bar{\Gamma})$ one sets $\$ Z ; \bar{Z}\$_{\mathfrak{K}}$ to the maximum of two optimal constants for each of the the bounds of \eqref{eqs: analytic req for model} where in the first and second bound the individual objects are replaced by differences $(\Pi_{z} - \bar{\Pi}_{z})$ and $(\Gamma_{z\bar{z}} - \bar{\Gamma}_{z\bar{z}})$, respectively. 
Together the pseudometrics $\$\cdot ; \cdot \$_{\mathfrak{K}}$ make the space of models a \emph{non-linear} metric-space \footnote{the non-linearity coming from the constraint \eqref{eqs: alg req for model}}. 

To formulate the condition of admissibility it is convenient to switch to parameterizing models by pairs of maps $(\Pi,f)$ where $\Pi$ is as before and $f: \R^{2} \mapsto \mathcal{T}_{+}^{\ast}$ which we write $z \mapsto f_{z}$. The correspondance between $f$'s and $\Gamma$'s is given by  $\Gamma_{z \bar{z}} = \Gamma_{f_{z}}^{-1} \Gamma_{f_{\bar{z}}}$, here we use the notation of \eqref{structure group def}. It is straightforward to verify that $(\Pi,\Gamma)$ constructed in such a way automatically satisfies condition \eqref{eqs: alg req for model}. 

The notion of \emph{admissibility} can then be stated as follows. 
\begin{definition}\label{def: admissible}
 A pair of maps $(\Pi,f)$ as above is said to be \emph{admissible} on $(\CT,\CG)$ if the following conditions hold for all $z, \bar{z} \in \R^{d}$, and for any multi-index $k$, 
\[
(\Pi_{z}\mathbf{1})(\bar{z})\ = 1,
\quad
(\Pi_{z}X^{k}\tau)(\bar{z}) = (\bar{z} - z)^{k} (\Pi_{z}\tau)(\bar{z}), 
\quad
f_{z}(X^{k}) = (-z)^{k}
\]
and for every $\tau \in \CW$ with $\CI(\tau) \in \CW$,
\begin{equation*}
\begin{split}
f_{z}(\CJ_{k}\tau) 
=&
- \mathbf{1}_{\left\{ |k|_{\s} < |\tau| + 2 \right\}}
\times
\int_{\R^{2}}
D^{k}K(z-\bar{z})
(\Pi_{z}\tau)({\rm d}\bar{z}),\\ 
(\Pi_{z} \CI\tau)(\bar{z})
=&
\int_{\R^{2}}
K(\bar{z} - z')
(\Pi_{z}\tau)({\rm d}z')
+
\sum_{k}
\frac{(\bar{z} - z)^{k}}{k!}
f_{z}(\CJ_{k}\tau).
\end{split}
\end{equation*}
\end{definition}
We remark that if the pair $(\Pi,f)$  is admissible and $\Pi$ satisfies the first bound of \eqref{eqs: analytic req for model} then the $\Gamma$'s built from $f$ satisfy the second and $(\Pi,f)$ determines a model. We denote by $\MM$ the complete metric space of admissible models.

We now describe one way to lift a continuous space-time function $\psi$ to a corresponding admissible model $Z_{\psi} = (\Pi,f)$. The algebraic constraints placed on admissible models are quite strong - if we define 
\begin{equation}\label{def of canonical model}
(\Pi_{z}\Xi)(\bar{z}) \eqdef \psi(\bar{z})
\quad
\textnormal{  and  }
\quad
(\Pi_{z}\tau \bar{\tau} ) (\bar{z}) \eqdef (\Pi_{z}\tau)(\bar{z})
\cdot 
(\Pi_{z}\tau)(\bar{z})
\end{equation}
then one can use the identities of Definition \ref{def: admissible} to define the rest of the action of $(\Pi,f)$. We call the model $Z_{\psi}$ built this way the canonical model built from $\psi$ - we use the shorthand $Z_{\eps} = (\Pi^{\eps},f^{\eps}) \eqdef Z_{\zeta_{\eps}}$ where $\zeta_{\eps}$ is our rescaled random field. 

A defect of the family of models $Z_{\eps}$ is that they \emph{do not} converge to a limiting model in $\MM$ as $\eps \downarrow 0$, the key difficulties coming from symbols $\tau$ which correspond to products of insufficiently regular space-time processes. We will have to modify this family to get a new collection of renormalized models $\hat{Z}_{\eps} = (\hat{\Pi}^{\eps},\hat{f}^{\eps})$ - in general these new models will not satisfy the second identity of \eqref{def of canonical model} - as an example we will have $
\Pi_{z}^{\eps}(\CI[\Xi]\Xi)(\bar{z})
=
\Pi_{z}^{\eps}(\CI[\Xi])(\bar{z})
\Pi_{z}^{\eps}(\Xi)(\bar{z})
-
C^{\<Xi2>}\eps^{-1}$, without this subtraction the RHS would not converge to a meaningful object.  

It is a fairly non-trivial task to determine how to deform the product property of a canonical model and still be left with an admissible model. In the theory of regularity structures this type of deformation of the product property is encoded via the action of a linear map $M: \CT_{0} \mapsto \CT_{0}$ for an appropriate subset $\CT_{0} \subset \CT$. One then has the following theorem, which is combination of Prop. 8.36, Def. 8.41, Theorem 8.44 in \cite{Regularity} and Theorem B.1 of \cite{KPZJeremy}.
\begin{theorem}\label{thm: renormalization operator}
Let $\CT_{0} \subset \CT$ satisfy the properties that $\oplus_{\alpha \le 0} \CT_{\alpha} \subset \CT_{0}$ and
\[ 
\Delta \CT_{0} \subset 
\CT_{0} \otimes {\rm Alg}( \CT_{0} )
\]
where ${\rm Alg}( \CT_{0} )$ is the subalgebra of $\CT_{+}$ spanned by terms of the form $X^{k} \prod_{i} \CJ_{l_{i}}(\tau_{i})$ with $\tau_{i} \in \CT_{0}$. 

Let $M: \CT_{0} \mapsto \CT_{0}$ be a linear map that commutes with both the application of $\CI[ \cdot ]$ and multiplication by $X^{k}$. Then there exist a unique linear, multiplicative map $\hat{M}: \CT_{+} \mapsto \CT_{+}$ fixing abstract polynomials and a unique linear map $\Delta^{M}: \CT \mapsto \CT \otimes \CT_{+}$ such that one has
\begin{equation*} 
\hat{M} \CJ_{k} 
=
\mathcal{M}( \CJ_{k} \otimes 1)\Delta^{M} 
\quad \textnormal{ and } \quad
(1 \otimes \mathcal{M})(\Delta \otimes 1)\Delta^{M}
=
(M \otimes \hat{M}) \Delta.  
\end{equation*}
Suppose furthermore that the map $\Delta^{M}$ is \emph{upper triangular}, that is for every $\alpha \in A$ and $\bar{\tau} \in \CT_{\alpha}$ one has $\Delta^{M}\tau \in (\oplus_{\beta \ge \alpha} \CT_{\alpha} ) \otimes \CT_{+}$. Then if $(\Pi,f)$ is an admissible model then so is the renormalized model $(\Pi^{M},f^{M})$ given by
\[
\Pi^{M}_{z} \eqdef 
(\Pi_{z} \otimes f_{z}) \Delta^{M}\quad
\textnormal{ and }
\quad
f_{z}^{M} \eqdef f_{z} \circ \hat{M}.
\]
Furthemore, the family of $M$ satisfying the above properties form a group $\mathfrak{R}$ under composition. 
\end{theorem}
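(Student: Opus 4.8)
The plan is to treat this statement as an amalgam of Prop.~8.36, Def.~8.41 and Theorem~8.44 of \cite{Regularity} with Theorem~B.1 of \cite{KPZJeremy}, so I would recall the recursive construction of $\Delta^{M}$ and $\hat M$ and check the two displayed identities by induction rather than reprove everything. Recall that every element of $\CT$ is obtained from the generators $\1,\Xi,X_{0},X_{1}$ by multiplication and by the operations $\CI(\cdot)$ and $\CI(\Xi\,\cdot)$. First I would fix $\hat M$ to be the identity on abstract polynomials, and $\Delta^{M}$ on the generators so that the two identities hold there (this is forced, giving $\Delta^{M}\1 = \1\otimes\1$ and similarly on $\Xi$ and $X_{i}$). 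Then one propagates: if $\Delta^{M}$ is known on a symbol $\tau$, the first identity $\hat M\CJ_{k} = \mathcal{M}(\CJ_{k}\otimes 1)\Delta^{M}$ \emph{defines} $\hat M$ on each generator $\CJ_{k}(\tau)$ of $\CT_{+}$, and multiplicativity of $\hat M$ extends this to all of ${\rm Alg}(\CT_{0})$; for $\CI(\tau)$ one takes the natural deformation of the recursive formula for $\Delta\CI(\tau)$, namely $\Delta^{M}\CI(\tau)\eqdef (\CI\otimes{\rm Id})\Delta^{M}\tau + \sum_{l,k}\frac{X^{l}}{l!}\otimes\frac{X^{k}}{k!}\,\hat M\CJ_{l+k}(\tau)$; and the value on products is dictated by requiring the second identity to persist ($\Delta$ being multiplicative). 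One then checks, by the same induction, that $\Delta^{M}$ does take values in $\CT\otimes\CT_{+}$ and that the second identity holds on all of $\CT$; uniqueness is simultaneous, since the identities, read as a recursion, admit at most one solution.

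The passage from $(\Delta^{M},\hat M)$ to admissibility of $(\Pi^{M},f^{M})$ is where the upper-triangularity hypothesis enters. The algebraic relations of Definition~\ref{def: admissible} for $(\Pi^{M},f^{M})$ follow directly from the two identities together with the facts that each $f_{z}$ is an algebra morphism and that $\mathcal{M}$ is the product on $\CT_{+}$. The analytic bounds \eqref{eqs: analytic req for model} are then inherited from those of $(\Pi,f)$: upper triangularity forces $\Pi^{M}_{z}\tau = (\Pi_{z}\otimes f_{z})\Delta^{M}\tau$ to be a finite sum of terms built out of $\Pi_{z}\sigma$ with $|\sigma|\ge|\tau|$, so that testing against $\phi^{\lambda}_{z}$ loses no power of $\lambda$; the bound on the associated $\Gamma$'s follows from the relation between $f^{M}$ and the $\Gamma$'s recorded in Section~\ref{sec:Adm-model}. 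This is exactly the content of Theorem~8.44 of \cite{Regularity} and Theorem~B.1 of \cite{KPZJeremy}, the latter also supplying the precise subspace $\CT_{0}$ and the bookkeeping guaranteeing that $\Delta^{M}$ lands in $\CT_{0}\otimes{\rm Alg}(\CT_{0})$; I would invoke these.

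For the group structure of $\mathfrak{R}$: the identity element is $M = {\rm Id}$, for which $\Delta^{M}\tau = \tau\otimes\1$ and $\hat M = {\rm Id}$. Closure under composition and associativity follow by checking that $M\circ M'$ again commutes with $\CI$ and with multiplication by $X^{k}$ and has upper-triangular $\Delta^{M\circ M'}$, together with a cocycle-type identity expressing that renormalizing a model first by $M'$ and then by $M$ agrees with renormalizing it by $M\circ M'$ (up to the order convention). Invertibility of each $M\in\mathfrak{R}$ holds because, with respect to the locally finite and bounded-below grading $A$, such an $M$ differs from the identity by a map that strictly raises homogeneity, hence is invertible with inverse again in $\mathfrak{R}$.

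I expect the main obstacle to be the self-consistency of the recursion defining $\Delta^{M}$: one must verify that the ansatz for $\Delta^{M}\CI(\tau)$ and the values forced on products are mutually compatible with the second identity on \emph{all} of $\CT$, and do not over-determine the maps. This is precisely the place where the Hopf-algebraic compatibility of $\Delta$ with the product and with $\CI$ — the comodule coproduct structure on $\CT_{+}$ mentioned after \eqref{structure group def} — is indispensable. Everything else is routine algebra, a homogeneity count, or a direct appeal to \cite{Regularity,KPZJeremy}.
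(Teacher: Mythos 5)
Your proposal is correct and follows essentially the same route as the paper, which proves nothing new here but simply quotes Prop.~8.36, Def.~8.41 and Theorem~8.44 of \cite{Regularity} together with Theorem~B.1 of \cite{KPZJeremy}; your sketch of the recursive construction of $\hat M$ and $\Delta^{M}$ and of the role of upper triangularity is a faithful summary of what those references establish, and your ultimate appeal to them matches the paper's treatment.
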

Later, we will prescribe renormalization maps $M^{\eps}$, sketch how one checks the upper-triangle condition for $\Delta^{M^{\eps}}$, and set $\hat{\Pi}^{\eps} \eqdef (\Pi^{\eps}_{z} \otimes f^{\eps}_{z}) \Delta^{M^{\eps}}$ and $\hat{f}^{\eps} = f_{z} \circ \hat{M^{\eps}}$.
Returning to our previous example, one will have $\Delta^{M^{\eps}}\<Xi2> = M^{\eps}\<Xi2>\otimes 1$ and $M^{\eps} \<Xi2> = \<Xi2> - C^{\<Xi2>}\eps^{-1}$. 
\subsection{Modeled distributions and abstract fixed point problem}
Given an admissible model $Z$, one can then start formulating abstract fixed point problems in spaces of modelled distributions $\CD^{\gamma,\eta}$.
\begin{definition}\label{def: modelled distributions} Given an admissible model $Z \in \mathcal{M}$ and $\gamma, \eta \in \R$ we define the space of modelled distributions $\mathcal{D}^{\gamma,\eta}$ to be the set of all functions $U: \R^{2} \mapsto \oplus_{\alpha < \gamma} \CT_{\alpha}$ such that for every compact set $\mathfrak{K} \subset \R^{2}$ one has
\begin{equation}\label{eq: modelled dist norm}
\|U\|_{\gamma,\eta}
\eqdef
\sup_{z \in \kappa}
\sup_{\alpha < \gamma}
\frac{\|U(z)\|_{\alpha}}
{|t|^{(\eta-\alpha)/2}}
+
\sup_{(z,\bar{z}) \in \mathfrak{K}^{(2)}}
\sup_{\alpha < \gamma}
\frac{\|U(z) - \Gamma_{z \bar{z}} U(\bar z)\|_{\alpha}}
{
(|t| \wedge |\bar{t}|)^{(\eta - \gamma)/2}
|z - \bar{z}|^{\gamma - \alpha}
}
<
\infty.
\end{equation}
Here $\mathfrak{K}^{(2)} \eqdef  \{ (z,\bar{z})  \in \mathfrak{K}^{2}:\  |z - \bar{z}| \le 1 \wedge \frac{1}{2} \sqrt{|t| \wedge |\bar{t}|} \} $.
\end{definition}
One of the main theorems in \cite{Regularity} says that there exists a reconstruction operator $\CR$ mapping the elements of $\CD^{\gamma,\eta}$ to 
 actual functions or distributions.
In the space $\CD^{\gamma,\eta}$ one can define the notions of multiplication and composition with smooth functions. It is also possible to construct a linear operator $\CP$ on this space which represents the space-time convolution by the heat kernel, namely one has $\CR \CP U = P*\CR U$ where $*$ is space-time convolution.
These constructions allow us to formulate and solve abstract fixed point problems  in the space $\CD^{\gamma,\eta}$, and then apply the operator $\CR$ on the abstract solution, which yields an actual function or distribution.
For instance, regarding the equation \eqref{eq: Main SPDE}, the abstract fixed point problem in the space $\CD^{\gamma,\eta}$ is formulated
 as follows:
\begin{equ}[e:FP]
U = \CP \bigl((\hat H(U) + \hat G(U)\Xi)\one_{t > 0}\bigr) + P u_0\;,
\end{equ}
where $P u_0$ is understood as naturally lifted to the abstract polynomials $\bar\CT$. We actually consider this fixed point problem in a subspace 
$\CD_\CU^{\gamma,\eta} \subset \CD^{\gamma,\eta}$
consisting of functions that take values in the span of $\CU$ rather than $\CW$ (see \eqref{e:def-CW}).

The functions $\hat H,\hat G$ appearing in \eqref{e:FP} are understood as follows.
Given $U \in \CD_\CU^{\gamma,\eta}$ with $U(z) = u(z) \one + \tilde U(z)$
where
$\tilde U(z)$ takes values in $\bigoplus_{\alpha > 0}\CT_\alpha$
and a smooth function $G\colon \R \to \R$,
we write
\begin{equ}[e:FHat]
\bigl(\hat G(U)\bigr)(z) = G(u(z))\one + \sum_{k\ge 1} {D^k G(u(z))\over k!} \tilde U(z)^{k}\;,
\end{equ}
with the understanding that the product between any number of terms such that their
homogeneity adds up to $\gamma$ or more vanishes. 
Another property we will use is that $\CP U -\CI U \in \bar\CT$, so any solution $U$ to
\eref{e:FP} satisfies
\begin{equ}[e:propFP]
U(z) - \CI\bigl(\hat H(U(z)) + \hat G(U(z))\Xi\bigr) \in \bar \CT\;,
\end{equ}
for all points $z = (t,x)$ with $t \in (0,T)$.

It follows from \eref{e:propFP} and \eref{e:FHat} that if we consider it as an element of $\CD^{\gamma,\eta}$ with $\gamma$
greater than, but sufficiently close to, ${3\over 2}$, then $U$ is of the form
\begin{equs}
U&=u\,\1+G(u)\,\<IXi>+ G'(u)G(u)\,\<IXi2>  + u'\,\symbol{X_1} \label{e:formU}\\
&\quad+ G'(u)^2 G(u)\,\<IXi3>   + {1\over 2} G''(u) G^2(u) \<Xi4d> + G'(u)u'\,\<IXiX>\;,
\end{equs}
for some functions $u$ and $u'$. The symbols appearing here are introduced in Subsection~\ref{sec:WZsymbols}.
In $\CD^{\gamma'}$ for
$\gamma' > 0$ sufficiently close to $0$ we have the identity
\begin{equs}[e:RHS]
\hat G(U)\sXi &= G(u)\,\<Xi>+ G'(u)G(u)\,\<Xi2> + G'(u)^2 G(u)\,\<Xi3> + G'(u)u'\,\<XiX> \\
&\quad  + {1\over 2} G''(u) G^2(u)\, \<Xi3b> + {1\over 6} G'''(u) G^3(u) \,\<Xi4b>
+ G'(u)^3 G(u)\, \<Xi4> \\
&\quad + {1\over 2} G''(u) G'(u) G^2(u)\, \<Xi4c> + G''(u) G'(u) G^2(u)\,\<Xi4e>\\
&\quad + G'(u)^2 u'\,\<Xi2X> + G''(u) G(u) u'\,\<XXi2>\;.
\end{equs}
This expansion is needed to derive the renormalized equations
\eqref{e:SPDEapprox-NG} and
\eqref{e:defbarH-NG}.
\section{Graphical Moment Bounds} \label{sec:Bounds}
As discussed earlier the moments we need to bound can be represented by sums of graphs with hyperedges representing higher cumulants of the non-Gaussian noise. In this section we prove Theorem~\ref{theo:ele-graph} which states that Assumption~\ref{ass:ele-graph} for a given tree implies the desired bounds for every graph that appears in the aforementioned sum for the moment of that tree. We do not specialize to \eqref{eq: Main SPDE} and instead work in the general setting of d-dimensional space $\R^d$ with fixed scaling $\s \in \N^{d}$.
\subsection{Assumptions on kernels associated to hyper-edges}
We start by recalling  the notion of labeled coalescence trees, 
which will be useful for both the definition of norms on the kernels that are functions of more than two variables, and the proof to Theorem~\ref{theo:big-bd}.
A labeled coalescence tree $(T,\ell)$ is a rooted binary tree with every inner node $v$ associated with a natural number $\ell_v$ which respects the partial order of the nodes, namely, $\ell_v \ge \ell_w$ whenever $v\ge w$ (i.e. $w$ belongs to the shortest path connecting $v$ to the root).

We denote by $\mathcal{T}_{n}$ the set of labeled coalescence trees with precisely $n$ leaves. Given $(T,\ell) \in \mathcal{T}_{n}$ we define $\mathcal{D}(T,\ell) \subset (\R^{d})^{n}$ to be the set of all tuples $(x_{1},\dots,x_{n})$ such that for any $1 \le i < j \le n$ one has $|x_{i} - x_{j}| \sim 2^{-\ell_{v_{i} \wedge v_{j}}}$ where $v_{1},\dots,v_{n}$ denote the leaves of $T$.
\begin{definition}  \label{def:hyperhomo}
For any $\alpha \ge 0$ we define the following quantities and any function $\kappa_n$ of $n > 2$ space-time points, we define
\[
\|\kappa_n\|_{\alpha}
\eqdef\ 
\sup_{(T,\ell) \in \mathcal{T}_{n}}
\Big[
2^{-\alpha \ell_\rho}
\sup_{(x_{1},\dots,x_{n}) \in \mathcal{D}(T,\ell)}
|\kappa_{n}(x_{1},\dots,x_{n})|
\Big]\;.
\]
Here $\rho$ denotes the root of the tree $(T,\ell)$. 
\end{definition}
\begin{definition} \label{def:hyperhomo2}
Given any $\alpha \ge 0$ and $p \in \N$, and a function $\kappa_{2}:\R^{d} \setminus \{0\} \rightarrow \R$ set
\[
\|\kappa_2\|_{\alpha}
\eqdef
\Big(
\int_{\R^{d}} dx\ 
|\kappa_{2}(x)|
\Big)
\vee
\sup_{
\substack{
(T,\ell) \in \mathcal{T}_{2}\\
k \in \N^{d},\ |k|_{\s} \le p}
}
\Big[
2^{-(\alpha + |k|_{\s}) \ell_\rho}
\sup_{(x_{1},x_{2]}) \in \mathcal{D}(T,\ell)}
|\kappa_{2}(x_{1} - x_{2})|
\Big]\;.
\]
\end{definition}
\begin{remark}
Note that the definition of $| \cdot \|_{\alpha, p}$ given by Definition~\ref{def:hyperhomo2} dominates that given \cite[Definition~10.12]{Regularity} for kernels of homogeneity $\alpha$.
The extra integral included in Definition~\ref{def:hyperhomo} is needed in order to handle renormalizations that arise when $\alpha \ge |\s|$.
\end{remark}
\begin{lemma}
Suppose the family of cumulants $\{\fC_{n}\}_{n \in \N}$ are exponentially decaying as in Definition~\ref{def: exp decay condition on cumulants}. Then for each $n \in \N$ one has
\begin{equ} \label{e:kappa-exp}
\| \fC_{n}^{(\eps)} \|_{\alpha}
\lesssim
\infty
\end{equ}
uniform in $\eps \in (0,1]$ when $\alpha = n/2 \times |\s|$, where the $\{ \fC_{n}^{(\eps)} \}_{n \in \N}$ are rescaled cumulants.
\end{lemma}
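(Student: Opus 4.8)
The plan is to unfold both $\|\cdot\|_\alpha$ norms (Definitions~\ref{def:hyperhomo} and~\ref{def:hyperhomo2}) against the rescaled cumulants $\fC_n^{(\eps)}$ and to exploit two structural facts: the cumulants decay exponentially (Definition~\ref{def: exp decay condition on cumulants}), and the parabolic scaling $\zeta_\eps(t,x) = \eps^{-3/2}\zeta^{(\eps)}(\eps^{-2}t,\eps^{-1}x)$ contributes a definite power of $\eps$ to each $n$-point cumulant which is \emph{exactly} compensated by the homogeneity exponent $\alpha = \tfrac n2|\s|$. So the two steps are: first pin down the $\eps$-dependence, then check finiteness of the $\eps$-independent part using exponential decay.

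First I would record how rescaling acts on cumulants. Since $\zeta_\eps = \eps^{-3/2}\zeta^{(\eps)}(\eps^{-2}\cdot)$ and cumulants are multilinear, the $n$-th cumulant of $\zeta_\eps$ at points $z_1,\dots,z_n$ equals $\eps^{-3n/2}\fC_n^{(\eps)}(\eps^{-\s}z_1,\dots,\eps^{-\s}z_n)$ where $\eps^{-\s}z$ abbreviates the parabolic rescaling (here $|\s|=3$, so $3n/2 = \tfrac n2|\s|$ in the Wong–Zakai case; in general one keeps $\tfrac n2|\s|$ as in the statement). Feeding this into Definition~\ref{def:hyperhomo}: on the scaling regime $\mathcal D(T,\ell)$ the supremum over $(x_1,\dots,x_n)$ of $|\fC_n^{(\eps)}(x_1,\dots,x_n)|$ is controlled by the worst configuration, whose effective diameter is $2^{-\ell_\rho}$; after accounting for the $\eps$-dilation the factor $2^{-\alpha\ell_\rho}$ with $\alpha = \tfrac n2|\s|$ absorbs precisely the $\eps^{-3n/2}$ from the rescaling together with the Jacobian-type counting of the $n-1$ independent differences, leaving a quantity bounded uniformly in $\eps$. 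The point is purely bookkeeping: the homogeneity $\tfrac n2|\s|$ is chosen so that $\|\fC_n^{(\eps)}\|_\alpha$ is scale-invariant up to the periodization correction, which by \eqref{e:coupling} is negligible for small $\eps$.

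Next I would verify that the $\eps$-independent quantity is actually finite, i.e. that the original (un-rescaled, possibly periodized) cumulants satisfy the required bound. This is where Definition~\ref{def: exp decay condition on cumulants} enters: exponential decay of $\fC_n$ in the differences of its arguments gives, on each coalescence regime $\mathcal D(T,\ell)$, a bound $|\fC_n(x_1,\dots,x_n)| \lesssim e^{-c\,\mathrm{diam}}$ which is trivially $\lesssim 2^{\alpha\ell_\rho}$ whenever $\ell_\rho$ is bounded below (small scales: here one needs continuity/boundedness of $\fC_n$, which holds by assumption that $\zeta$ is a.s.\ continuous with all cumulants existing), and which beats any power growth when $\ell_\rho \to -\infty$ (large scales: exponential decay dominates the polynomial factor $2^{\alpha\ell_\rho}$). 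Since the number of labeled coalescence trees in $\mathcal T_n$ is finite for each fixed $n$, taking the supremum over $(T,\ell)\in\mathcal T_n$ preserves finiteness. For $n=2$ one additionally checks the integral $\int |\fC_2^{(\eps)}| < \infty$ appearing in Definition~\ref{def:hyperhomo2}, which is immediate from exponential decay and the normalization $\int\zeta(0)\zeta(z)\,dz = 1$.

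The main obstacle I anticipate is the periodization: $\fC_n^{(\eps)}$ is the cumulant of the \emph{periodized} field $\zeta^{(\eps)}$, not of $\zeta$ itself, and as the remark after \eqref{e:defzeta} warns, these cumulants may have \emph{infinite} range in the time direction (only exponentially decaying). So one cannot simply quote exponential decay of $\fC_n$; one must transfer the estimate through the coupling \eqref{e:coupling}, controlling the discrepancy $\fC_n^{(\eps)} - \fC_n$ uniformly on the relevant compact regions and using that off these regions the exponential decay (now in a periodic metric) still suppresses the polynomial weight $2^{\alpha\ell_\rho}$. Making this uniform in $\eps\in(0,1]$ — in particular ensuring the constant in \eqref{e:kappa-exp} does not blow up as $\eps\downarrow 0$ — is the real content; everything else is a scaling computation.
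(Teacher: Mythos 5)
Your skeleton (pull out $\eps^{-n|\s|/2}$ by scaling, then invoke exponential decay) is the same as the paper's, but the step that actually makes the bound uniform in $\eps$ is missing, and the justification you put in its place is wrong. The quantity in Definition~\ref{def:hyperhomo} is a weighted supremum, not an integral, so there is no ``Jacobian-type counting of the $n-1$ independent differences'', and the weight $2^{-\alpha\ell_\rho}$ does \emph{not} ``absorb precisely'' the factor $\eps^{-n|\s|/2}$: when the coalescence scale satisfies $2^{-\ell_\rho}\ge \eps$ one has $2^{-\alpha\ell_\rho}\,\eps^{-n|\s|/2}=(2^{-\ell_\rho}/\eps)^{n|\s|/2}\gg 1$, so the sup of the rescaled cumulant is far too large by itself and only the exponential decay at rescaled distances $\sim 2^{-\ell_\rho}\gtrsim\eps$ saves the bound. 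The paper's proof is exactly this dichotomy: from $|\fC_{n}^{(\eps)}(z_1,\dots,z_n)|\lesssim \eps^{-n|\s|/2}e^{-\eps^{-1}|z_1-z_2|}$ with $|z_1-z_2|=\diam(\vec z)\sim 2^{-\ell_\rho}$, bound the exponential by $1$ if $2^{-\ell_\rho}<\eps$, and if $2^{-\ell_\rho}\ge\eps$ use $e^{-t}\lesssim t^{-n|\s|/2}$ with $t=\eps^{-1}2^{-\ell_\rho}\ge 1$ to generate the missing factor $(\eps\,2^{\ell_\rho})^{n|\s|/2}$. Your third paragraph gestures at this mechanism but applies it to the \emph{un-rescaled} cumulants in a regime $\ell_\rho\to-\infty$ that does not exist in the norm: the labels $\ell_v$ are natural numbers, so $\ell_\rho\ge 0$, and for the un-rescaled field mere boundedness already gives a finite norm (plus integrability for $n=2$); nothing there encodes the competition between $2^{-\ell_\rho}$ and $\eps$. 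To make your ``scale-invariance'' reduction honest you would have to introduce a $\Z$-labelled extension of the norm, prove its exact invariance under the parabolic dilation for $\alpha=n|\s|/2$, and then prove finiteness of that extended norm for the periodized cumulants — at which point the large-scale part of that verification is precisely the two-case estimate above, so you have not avoided it.

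Separately, your diagnosis that the periodization is ``the real content'', to be handled via the coupling \eqref{e:coupling}, does not work: \eqref{e:coupling} is only a second-moment bound on $\zeta-\zeta^{(\eps)}$ and gives no control on $n$-th cumulants of the periodized field, so it cannot justify that the ``periodization correction'' is negligible at the level of $\|\cdot\|_\alpha$. The paper does not transfer anything through the coupling here; it uses exponential decay of the cumulants of $\zeta^{(\eps)}$ itself, which is part of how the periodization is set up (cf.\ Remark~\ref{rem:periodization}, where the cumulants of $\zeta^{(\eps)}$ are explicitly allowed infinite range in $t$ but remain exponentially decaying), and then runs the estimate directly on $\fC_n^{(\eps)}$.
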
 
\begin{proof}
Without loss of generality suppose \eqref{def: exp decay condition on cumulants} with $\theta = e^{-1}$. Fix $n$, and suppose we are given $\vec{z} = (z_{1},\dots,z_{n}) \in (\R^{d})^{n}$ - we can assume that $\diam(z_{1},\dots,z_{n}) = |z_{1} - z_{2}|$. It follows that we have
\[
\left| 
\fC_{n}^{(\eps)}(z_{1},\dots,z_{n}) 
\right|
\lesssim
\eps^{-n|\mathfrak{s}|/2}
e^{-\eps^{-1}|z_{1} - z_{2}|}.
\]
We want to show that if $\vec{z} \in \mathcal{D}(T,\ell)$ then the RHS above is bounded by some constant times $2^{n \ell(\rho) |\mathfrak{s}|/2}$. 
If $2^{-\ell(\rho)} < \eps$ then this is immediate (just bound the exponential factor
by one), so suppose instead that $2^{-\ell(\rho)} \ge \eps$. Then since $|z_{1} - z_{2}| \sim 2^{-\ell(\rho)}$ we have
\begin{equation*}
\left| 
\fC_{n}^{(\eps)}(z_{1},\dots,z_{n}) 
\right|
\lesssim
\eps^{-n|\mathfrak{s}|/2}
e^{-\eps^{-1}2^{-\ell(\rho)}}
\lesssim
\eps^{-n|\mathfrak{s}|/2} \times (\eps^{-1}2^{-\ell(\rho)})^{-n|\mathfrak{s}|/2}
\end{equation*}
where we used the inequality 
$e^{-t} \lesssim t^{-n|\s|/2}$ for $t \ge 1$.
Thus the claim follows.
\end{proof}
\subsection{The entire graph with hyper-edges} \label{sec:EntireGraph}
We now state a modified version of the bound on generalized convolutions found in \cite{KPZJeremy}. The difference here is that we allow for the presence of the hyperedges described above.
The proof of our version of the bound essentially follows in the same way as that found in \cite{KPZJeremy}, so instead of giving a full proof here we only list the ways in which the proof needs to be modified.

The basic setting for these proofs is encoding the key properties of our generalized convolution as a (decorated) finite graph $G=(\CV,\CE)$. $\CV$ as before is the vertex set, which includes a subset of distinguished vertices $\CV_{\star}$, one of which we call $0$.
The set $\CE$ can be decomposed as $\CE_2 \sqcup \CE_h$
($\sqcup$ meaning disjoint union)
 where $\CE_{2}$ is the set of normal directed edges (denoted by ordered pairs $(e_-,e_+)$ with $e_{-},e_{+} \in \CV$) and $\CE_{h}$ is the set of hyper-edges (subsets $ e \subset \CV$ with $|e| \ge 3$) \footnote{We sometimes use the term edge to refer to any element of $\CE$, not just the elements of $\CE_{2}$.}. 

We make further assumptions on the set $\CE$ which we list below.
\begin{itemize}
\item For any distinct $e_1,e_2\in\CE_h$ one has $e_1\cap e_2=\emptyset$.
\item For any  $e \in\CE_2$ one has $|e\cap \Big( \bigcup_{\bar e\in\CE_h} \bar e\Big)| \le 1$.
\item For all $e \in \CE_h$ one has $e \cap \CV_\star=\emptyset$.
\end{itemize}

The edges $e\in\CE$ are also decorated with labels $a_e,r_e$ where $a_{e} \in \R$ and $r_{e} \in \Z$. We now list assumptions we make on these labels.
\begin{itemize}
\item[(a)] For every $e\in\CE_h$, $a_e=|e||\s|/2$ and $r_e=0$. 
\item[(b)] If $e\in\CE_2$, $\bar e\in\CE_h$, and $e\cap \bar e\neq\emptyset$,
then $r_e\le 0$ and $e\cap \bar e = \{ e_- \}$. 
	\footnote{This requirement allows us to ensure that we never need estimates on derivatives of the kernels associated to our hyperedges.}
\end{itemize}

The edges $e\in\CE_2$ are associated with kernels $K_e$ which are smooth functions on $\R^{d} \times \R^{d} \setminus \{0\}$ and satisfy $\|K_e\|_{a_e,p}<\infty$ for any $p>0$. 
The edges $e\in\CE_h$ are associated with functions $\kappa_e$ 
on $(\R^{d})^{|e|}$ with $\|\kappa_e\|_{a_e}<\infty$ where $\|\kappa_e\|_{a_e}$ is defined in Definition~\ref{def:hyperhomo}. 
We will write $\kappa_e (e)\eqdef \kappa_e(x_1,\dots,x_{|e|})$ if $e=\{x_1,\dots, x_{|e|}\}$.
For edges $e\in\CE_2$ one also has renormalized kernels $\hat K_e$ as follows.
If $r_e < 0$ then define the distribution
\begin{equ}[e:defRen]
\bigl(\Ren K_e\bigr)(\phi) = \int K_e(x) \Bigl(\phi(x) - \sum_{|k|_\s < |r_e|} {x^k \over k!} D^k\phi(0)\Bigr)\,dx + \sum_{|k|_\s < |r_e|} {I_{e,k} \over k!} D^k\phi(0)
\end{equ}  
where $\{I_{e,k}\}_{|k|_\s < |r_e|}$ is a collection of real numbers, and the distributional ``kernel'' $\hat K_e$ acts on smooth $\phi$ on $\R^d\times\R^d$ by
$
 \hat K_e (  \phi)
\eqdef\tfrac12\int  
\Ren K_e( \phi_{z})\,dz
$ where $\phi_{ z}(\bar z)\eqdef\phi((z+\bar z)/2,(z-\bar z)/2)$.
For $r_e \ge 0$, we define
\begin{equ}[e:KhatPos]
\hat K_e(x_{e_-}, x_{e_+}) = 
K_e(x_{e_+}-x_{e_-}) - \sum_{|j|_\s < r_e} {x_{e_+}^j \over j!} D^j K_e(-x_{e_-})\;.
\end{equ}

To lighten notation we assume that the $\kappa_{e}$ are always symmetric functions of their arguments.
With these notations the key quantity of interest is as follows.
\begin{equ}\label{e:genconv}
\CI^G(\phi_\lambda,K,\kappa) 
\eqdef  \int_{(\R^d)^{\CV_0}} 
\prod_{e\in \CE_2}{\hat K}_e(x_{e_-}, x_{e_+})
\prod_{e\in \CE_h}\kappa_e(e)
\prod_{v\in \CV_\star\setminus \{0\}} \phi_\lambda(x_v) \,dx\;,
\end{equ}
where $x_v \in\R^d$ is the point corresponding to the vertex $v\in \CV$.

For any $\bar \CV \subset \CV$, the subsets 
$\CE^\uparrow(\bar \CV)$ and $\CE^\downarrow(\bar \CV)$ of $\CE$ are defined in the same way as in \cite{KPZJeremy}, 
namely,
$\CE^\uparrow(\bar \CV) 
	= \{e  \in \CE \,:\, e\cap \bar \CV = e_- \;\&\; r_e > 0\}$
and $\CE^\downarrow(\bar \CV) = \{e  \in \CE \,:\, e\cap \bar \CV = e_+ \;\&\; r_e > 0\}$,
in particular $\CE^\uparrow(\bar \CV), \CE^\downarrow(\bar \CV) \subset \CE_2$
by our assumption on the label $r_e$. We also set
\begin{equ} [e:E0E]
\CE_0(\bar \CV) 
\eqdef
\{e \in \CE: \ e \subset  \bar \CV \}\;,
\enskip
\textnormal{and}
\enskip
\CE(\bar \CV)
\eqdef
\{e \in \CE :\ e \cap \bar \CV \not = \emptyset\}\;.
\end{equ}
We use the shorthands 
$r_e^+ = (r_e \vee 0)$ and $r_e^- = -(r_e \wedge 0)$. 
We now state our main assumptions on the labels $(a_{e},r_{e})_{e \in \CE}$.

\begin{assumption}\label{ass:big-graph}
The labelled graph $(\CV,\CE)$ satisfies the following properties.
\begin{itemize}\itemsep0em
\item[1.] For every edge $e \in \CE_2$, one has $a_e - r_e^- < |\s|$.
\item[2.] For every subset
$\bar \CV \subset \CV_0$ of cardinality at least $3$, one has 
\begin{equ} \label{e:assEdges} \tag{A.2}
\sum_{e \in \CE_0(\bar \CV)} a_e < |\s|\,(|\bar \CV| - 1)\;.
\end{equ}
\item[3.] For every subset
$\bar \CV \subset \CV$ containing $0$ and of cardinality at least $2$, one has 
\begin{equ} \label{e:assEdges1} \tag{A.3}
\sum_{e \in \CE_0(\bar \CV)} a_e 
	+ \sum_{e \in \CE^\uparrow(\bar \CV)}(a_e + r_e - 1)
	 - \sum_{e \in \CE^\downarrow(\bar \CV)} r_e < |\s|\,(|\bar \CV| - 1)\;.
\end{equ}
\item[4.] For every non-empty subset $\bar \CV \subset \CV\setminus\CV_\star$,
one has the bounds
\begin{equ} \label{e:assEdges2} \tag{A.4}
\sum_{e \in \CE(\bar \CV)\setminus \CE^\downarrow(\bar \CV)} a_e  
+ \sum_{e \in \CE^\uparrow(\bar \CV)} r_e
- \sum_{e \in \CE^\downarrow(\bar \CV)} (r_e-1) > |\s|\,|\bar \CV| \;.
\end{equ}
\end{itemize}
\end{assumption}

\begin{remark}
The second assumption above is automatic for $\bar \CV=e\in\CE_h$
since condition \eqref{e:assEdges} then asks that $|\s|\,|\bar \CV|/2<|\s|\,(|\bar \CV| - 1)$ which follows from $|e| = |\bar \CV|\ge 3$.
%
\end{remark}

The main result in this subsection is the following.

\begin{theorem} \label{theo:big-bd}
Under Assumption~\ref{ass:big-graph}
we have the bound 
\begin{equ}\label{e:gen-conv-thm}
|\CI^G(\phi_\lambda,K,\kappa) | \lesssim \lambda^\alpha
	\prod_{e\in\CE_2} \|K_e\|_{a_e;p}
	\prod_{e\in\CE_h} \|\kappa_e\|_{a_e}
\end{equ}
where $\alpha=|\s||\CV \setminus \CV_\star|-\sum_{e\in\CE}a_e$, $\lambda\in (0,1]$, $p = \max\{ |r_{e}| : e \in \CE \} + 1$, and the proportionality constant only depends on the structure of $G$ and the labels $r_e$.
\end{theorem}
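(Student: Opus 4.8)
The plan is to follow the multiscale induction scheme of \cite{KPZJeremy} essentially verbatim, tracking the points where the presence of hyperedges forces a modification. Recall that the strategy there is to decompose each kernel $\hat K_e$ dyadically into pieces supported on annuli $\{|x_{e_+}-x_{e_-}| \sim 2^{-n}\}$, and then organize the resulting sum over scale assignments $\mathbf{n}\colon \CE_2 \to \Z$ using a forest/interval structure. To each subset $\bar\CV$ of vertices that ``coalesces'' at a given scale, one associates the local contribution and bounds it using three ingredients: the homogeneity bounds on the individual kernels, the integration over the internal vertices of $\bar\CV$, and the combinatorial convergence of the scale sum, which is exactly what conditions (A.2)--(A.4) are designed to guarantee. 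The renormalized edges $\hat K_e$ with $r_e \ne 0$ are handled via the usual ``positive/negative renormalization'' bookkeeping: for $r_e > 0$ the Taylor subtraction \eqref{e:KhatPos} gains $r_e$ powers of scale when $e_+$ is the ``small'' endpoint (giving the $a_e + r_e$ contribution in $\CE^\uparrow$) at the cost of losing them when $e_+$ is ``large'' (the $-r_e$ in $\CE^\downarrow$), and dually for $r_e < 0$ via \eqref{e:defRen}; assumption~1 guarantees the un-subtracted kernel pieces are still locally integrable.

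The new ingredient is the treatment of a hyperedge $e \in \CE_h$. Here I would use the labeled coalescence trees of Definition~\ref{def:hyperhomo}: the bound $\|\kappa_e\|_{a_e} < \infty$ says precisely that, on the region where the $|e|$ points of $e$ have a prescribed tree of pairwise distances governed by scales $\ell_v$, the kernel $\kappa_e$ is bounded by $2^{a_e \ell_\rho}$ where $\ell_\rho$ is the coarsest (root) scale. So in the multiscale decomposition one refines the scale-assignment structure: instead of assigning a single scale to the pair-interaction across a hyperedge, one assigns the internal coalescence structure of $e$, which is compatible with the interval/forest structure of the $\CE_2$-edges because of the three structural bullets on $\CE$ (disjointness of hyperedges, at most one vertex of any normal edge lies in the union of hyperedges, hyperedges avoid $\CV_\star$). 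The key point is that the structural assumption~(b) — $e \cap \bar e = \{e_-\}$ and $r_e \le 0$ for normal edges meeting a hyperedge — ensures that a normal edge never needs to ``see'' derivatives of $\kappa_e$ and that the endpoint of $e$ touching the hyperedge is always the one being integrated first, so the coalescence-tree bound can be applied cleanly. The power counting then produces, for a cluster $\bar\CV$, the contribution $\sum_{e \in \CE_0(\bar\CV)} a_e$ on the left of \eqref{e:assEdges} and \eqref{e:assEdges1}, with a hyperedge $e$ contributing $a_e = |e||\s|/2$ exactly when all its vertices are in $\bar\CV$ (this is why $\CE_0$, not $\CE$, appears), and the Remark after Assumption~\ref{ass:big-graph} checks this is consistent.

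Concretely, the steps are: (1) reduce to the case of finitely many scales by the dyadic decomposition of each $\hat K_e$, $e\in\CE_2$, and of the coalescence structure of each $\kappa_e$, $e\in\CE_h$; (2) for a fixed scale assignment, bound the integrand pointwise using the homogeneity bounds, the renormalization gains/losses for $\CE_2$, and the coalescence-tree bound for $\CE_h$; (3) integrate out internal vertices cluster by cluster from the finest scale up, at each stage using a subset $\bar\CV$ of the vertices identified at that scale and applying the relevant case of Assumption~\ref{ass:big-graph} — (A.4) for clusters disjoint from $\CV_\star$ to get a positive power of the local scale (hence integrability/summability downward), (A.2) for clusters of internal vertices of size $\ge 3$ not containing $0$, and (A.3) for clusters containing $0$ to extract the final global power $\lambda^\alpha$; (4) sum the geometric series over scale assignments, which converges precisely because the inequalities in Assumption~\ref{ass:big-graph} are strict. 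The main obstacle — and the only place real care is needed beyond citing \cite{KPZJeremy} — is step (2)--(3) at a cluster $\bar\CV$ that partially overlaps a hyperedge $e$ (i.e. $e \not\subset \bar\CV$ but $e \cap \bar\CV \ne \emptyset$): one must check that the root-scale bound $2^{a_e\ell_\rho}$ can still be localized to the sub-scales relevant to $\bar\CV$ without the full factor $a_e = |e||\s|/2$ being charged to that cluster. This is exactly why only $\CE_0(\bar\CV)$ appears in (A.2)/(A.3): a hyperedge straddling the boundary of $\bar\CV$ contributes nothing to the local power counting for $\bar\CV$ (its coarse scale is resolved by a larger cluster), and one simply bounds the corresponding factor by its value at the coarsest relevant scale, absorbing it at the stage where all of $e$ has coalesced. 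Verifying this localization — that the coalescence-tree norm interacts correctly with the nested cluster structure — is the technical heart of the modification; everything else is a transcription of the argument in \cite{KPZJeremy} with $\sum_{e\in\CE_2}$ replaced by $\sum_{e\in\CE_2} + \sum_{e\in\CE_h}$ in the appropriate places.
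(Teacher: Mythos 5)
Your proposal follows essentially the same route as the paper's proof: a multiscale decomposition in the style of \cite{KPZJeremy}, with the hyperedge kernels decomposed according to their pairwise coalescence structure, the weight $a_e=|e||\s|/2$ charged only at the scale where the whole hyperedge has coalesced (which is exactly why only $\CE_0(\bar\CV)$ enters (A.2)--(A.3) while $\CE(\bar\CV)$ enters (A.4)), structural assumption (b) guaranteeing the negative-renormalization operators never act on $\kappa_e$, and the strict inequalities of Assumption~\ref{ass:big-graph} feeding the summation criterion of \cite[Lemma~A.10]{KPZJeremy}. This matches the paper's argument (the definition of $\tilde\eta$ with $\tilde\eta_e(v)=-\tfrac12|\s||e|\one_{e_\uparrow}(v)$ for hyperedges is the formal implementation of your localization step), so the proposal is correct.
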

\subsubsection*{Multiscale expansion}
The proof of Theorem~\ref{theo:big-bd} is by a multiscale analysis implemented by a scale decomposition of all kernels.
For $e\in\CE_2$ the kernels $\hat K_e$ are decomposed into 
an infinite collection of kernels $\hat K_e^{(\mathbf n)}$ with $\mathbf n\in \N^3$ just as in \cite[Lemma~A.4,A.5]{KPZJeremy}.
We remark that as in \cite{KPZJeremy}, for an edge $e\in\CE_2$
without any kernel $K_e$ associated, the kernel $\hat K_e^{(\mathbf n)}$
is still defined and we set $(a_e,r_e)=(0,0)$.

We also implement a multiscale decomposition for the $\kappa_{e}$ as follows.

\begin{definition}\label{def: def-kappa-ne}
Given 
 $e=\{v_1,\dots,v_{|e|}\}\in\CE_h$, 
let me
\[
\mathbf n_e=\{\mathbf n_{i,j}\}_{1\le i <j\le |e|} \in (\N^3)^{{1\over 2}|e|(|e|-1)}
\]
where $\mathbf n_{i,j}=\mathbf n_{\{v_i,v_j\}}\in \N^3$. 
We define 
$\kappa_e^{(\mathbf n_e)}$
as follows. 
We set $\kappa_e^{(\mathbf n_e)}=0$ 
unless $\mathbf n_{i,j}=(m_{i,j},0,0)$ for every $i<j$; in the latter case, we set
\begin{equ} \label{e:def-kappa-ne}
\kappa_e^{(\mathbf n_e)} (x_{v_1},\dots,x_{v_{N}})
\eqdef \kappa_e (x_{v_{1}},\dots,x_{v_{N}}) \prod_{i\neq j}\Psi^{(m_{i,j})}(x_{v_i}-x_{v_j}) \;,
\end{equ}
where $N = |e|$, $\Psi^{(n)}$ is the cutoff function supported in the annulus of radius $\sim 2^{-n}$ with $k$-th derivative bounded by $2^{|k|_\s n}$,
and $\sum_n \Psi^{(n)}=1$.
\end{definition}

We can then decompose our generalized convolution as
\begin{equ}
\CI^G(\varphi_\lambda, K,\kappa) 
=\sum_{\mathbf n} 
\int_{(\R^d)^{\CV_0}}
\prod_{e\in\CE_2}\hat K_e^{(\mathbf n_e)}(x_{e_-}, x_{e_+})
\prod_{e\in\CE_h}\kappa_e^{(\mathbf n_e)}(e)
\prod_{v\in \CV_\star\setminus \{0\}} \phi_\lambda(x_v)  \,dx.
\end{equ}


%
As in \cite{KPZJeremy}
for $\lambda \in (0,1]$, let
$ \CN_\lambda$ be the set of $\mathbf n$
such that $2^{-|\mathbf n_{e} | } \le \lambda $
for every $e=(0,v)$ with $v\in\CV_\star\setminus \{0\}$.
Since $\varphi_\lambda$ can be viewed as a kernel with $a=0$ and norm being $\lambda^{-|\s|}$, it suffices for the proof of 
Theorem~\ref{theo:big-bd} to show
\begin{equ}\label{e:gen-conv-suffice}
|\CI_\lambda^G(K,\kappa)| \lesssim \lambda^\alpha
	\prod_{e\in\CE_2} \|K_e\|_{a_e;p}
	\prod_{e\in\CE_h} \|\kappa_e\|_{a_e}
\end{equ}
where $\alpha=|\s||\CV_0|-\sum_{e\in\CE}a_e$ and 
$
\CI^G_\lambda(K,\kappa) 
=\sum_{\mathbf n\in\CN_\lambda} \CI^{\mathbf{n},G}_\lambda(K,\kappa)
$ with
\begin{equ}\label{e:def-I 2}
\CI^{\mathbf{n},G}_\lambda(K,\kappa)
\eqdef
\int_{(\R^d)^{\CV_0}}
\prod_{e\in\CE_2}\hat K_e^{(\mathbf n_e)}
\prod_{e\in\CE_h}\kappa_e^{(\mathbf n_e)} \,dx.
\end{equ}

\subsubsection*{Multiscale clustering and coalescence trees}

As in \cite{KPZJeremy} one can associate a coalescence tree $(T,\ell)\in\CT(\CV)$
to any collection of vertex positions $\{x_{v}\}_{v \in \CV_{0}}$ with $x_{v} \in \R^d$ via a coalescing process.  For any two nodes $u,v$ of $T$, $u\wedge v$ denotes the common ancestor of $u,v$. For any edge $e\in\CE$ of the graph (may be hyperedge), $e_\uparrow$ is the common ancestor of all the leaves for the points in $e$, and  $e_\Uparrow$ is the immediate ancestor of $e_\uparrow$.

Given a labelled tree $(T,\ell)\in\CT(\CV)$ and a constant $c>0$, 
we define the set $\CN(T,\ell)$ of
functions $\mathbf n:\CV^2 \to \N^3$ as in \cite{KPZJeremy} with the additional constraint that for every $e\in\CE_h$ and every $\{v,w\} \subseteq e$ we enforce $\mathbf n_{(v,w)}=(m,0,0)$ with $|m-\ell_{v\wedge w}| \le c$.
If $\{v,w\}\notin \CE_2$ and $\{v,w\} \not \subset e$ for all $e\in\CE_h$,
then the set $\CN(T,\ell)$ imposes no requirement on $\mathbf n_{(v,w)}$.

\begin{lemma} \label{lem:sum-over-trees}
There exists $c >0 $ such that the following holds: for every $\mathbf n$ with the property that integral in \eqref{e:def-I 2} is non-vanishing,
there exists an element $(T,\ell) \in \CT(\CV)$ with $\mathbf n\in\CN(T,\ell)$.
\end{lemma}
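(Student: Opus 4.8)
The strategy is to run the same coalescence-tree construction as in \cite{KPZJeremy} and then check that the extra constraint coming from the hyperedges is automatically met. First I would recall the construction: given a tuple $\{x_v\}_{v\in\CV_0}$ with distinct coordinates, one builds $(T,\ell)\in\CT(\CV)$ by the usual bottom-up coalescing process, where two clusters merge at scale $n$ precisely when their mutual distance is of order $2^{-n}$; concretely $\ell_{v\wedge w}$ is the integer $m$ (up to an additive $O(1)$) with $|x_v-x_w|\sim 2^{-m}$. So the tree $(T,\ell)$ is essentially determined by the dyadic scales of all the pairwise distances, and by construction $|x_v - x_w| \sim 2^{-\ell_{v\wedge w}}$ for \emph{every} pair $v,w$, not just those joined by an edge of $\CE_2$.

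Next I would fix $\mathbf n$ such that the integrand in \eqref{e:def-I 2} does not vanish at some configuration $\{x_v\}$, and produce $(T,\ell)$ from that configuration. For the edges $e\in\CE_2$ the required relation $\mathbf n\in\CN(T,\ell)$, i.e. that $\mathbf n_e$ is compatible with $\ell_{(e_-)\wedge(e_+)}$ up to the constant $c$, is exactly the statement proved in \cite{KPZJeremy}: non-vanishing of $\hat K_e^{(\mathbf n_e)}(x_{e_-},x_{e_+})$ forces $|x_{e_-}-x_{e_+}|\sim 2^{-|\mathbf n_e|}$, which by the construction of $(T,\ell)$ pins $|\mathbf n_e - \ell_{(e_-)\wedge (e_+)}|$ to be bounded by a fixed constant. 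The only new ingredient is the hyperedges: for $e=\{v_1,\dots,v_{|e|}\}\in\CE_h$ I must check that whenever $\kappa_e^{(\mathbf n_e)}(x_{v_1},\dots,x_{v_{|e|}})\neq 0$ one has $\mathbf n_{(v_i,v_j)}=(m_{i,j},0,0)$ with $|m_{i,j}-\ell_{v_i\wedge v_j}|\le c$ for all $i<j$. But this is immediate from Definition~\ref{def: def-kappa-ne}: $\kappa_e^{(\mathbf n_e)}$ is defined to vanish unless every $\mathbf n_{i,j}$ has the form $(m_{i,j},0,0)$, and in that case it contains the factor $\Psi^{(m_{i,j})}(x_{v_i}-x_{v_j})$, which is supported in the annulus $|x_{v_i}-x_{v_j}|\sim 2^{-m_{i,j}}$; since the same distance is $\sim 2^{-\ell_{v_i\wedge v_j}}$ by the coalescence construction, comparing the two dyadic scales gives $|m_{i,j}-\ell_{v_i\wedge v_j}|\le c$ for a constant $c$ depending only on the implicit constants in these $\sim$ relations (and on $d$, $\s$). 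Taking $c$ to be the maximum of the constant needed for the $\CE_2$-edges in \cite{KPZJeremy} and the one just produced for hyperedges gives a single $c$ that works, and for pairs $\{v,w\}$ neither in $\CE_2$ nor inside a common hyperedge there is nothing to check, so $\mathbf n\in\CN(T,\ell)$.

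The only mild subtlety — and the place I would be most careful — is the interplay between the hyperedge factors and the $\CE_2$-kernels incident to a hyperedge: by assumption (b) such an edge $e\in\CE_2$ meets a hyperedge $\bar e$ only in $e_-$, and has $r_e\le 0$, so $\hat K_e$ is a genuine function (no positive renormalization subtraction to complicate the support analysis) and the scale assignment for $e$ is governed by its own $\Psi^{(\mathbf n_e)}$-decomposition exactly as in \cite{KPZJeremy}; there is no feedback that could spoil the estimate. I do not expect any real obstacle here — the lemma is essentially a bookkeeping statement whose content is that Definition~\ref{def: def-kappa-ne} was set up precisely so that the hyperedge scales sit inside the coalescence-tree framework — so the ``hard part'' is simply being scrupulous that a \emph{single} constant $c$ can be chosen uniformly over the (finitely many) combinatorial types of $G$ and over all configurations, which follows since all the $\sim$ relations involved carry constants depending only on $d$, $\s$, and the fixed cutoff $\Psi$.
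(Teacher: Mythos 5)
Your proposal is correct and follows essentially the same route as the paper: the paper's proof simply observes that the coalescence-tree construction of \cite{KPZJeremy} goes through unchanged, with the only new point being that non-vanishing of $\kappa_e^{(\mathbf n_e)}$ forces, via the supports of the cutoffs $\Psi^{(m_{i,j})}$ in Definition~\ref{def: def-kappa-ne}, the compatibility $|m_{i,j}-\ell_{v_i\wedge v_j}|\le c$ — exactly the check you carry out. Your additional remark about assumption (b) is harmless but not needed for this lemma.
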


\begin{proof}
The only difference in our setting versus that of \cite[Lemma~A.9]{KPZJeremy} are the additional constraints imposed by the requiring the support of
$\kappa^{(\mathbf n_e)}$ to be non-empty for every $e\in\CE_h$, however the argument remains exactly the same.
\end{proof}

Lemma~\ref{lem:sum-over-trees} allows us to bound $\CI_\lambda^G$ by a sum over labelled trees:
\begin{equ}   \label{e:sum-trees}
|\CI_\lambda^G(K,\kappa)| \lesssim
\sum_{(T,\ell)\in\CT_\lambda(\CV)} 
\sum_{\mathbf n\in\CN(T,\ell)} 
|\CI^{\mathbf{n},G}_\lambda(K,\kappa) | \;,
\end{equ}
where $\CT_\lambda(\CV) \subset \CT(\CV)$ is the set of coalescence trees such that $2^{-\ell_{v\wedge w}} \le \lambda$ for any $v,w\in\CV_\star$.
As in \cite{KPZJeremy}, when one wants to implement negative renormalizations to get a better (convergent) bound on the contribution from certain problematic labeled trees $(T,\ell) \in \CT_\lambda(\CV)$ the procedure is to replace the integrand appearing in \eqref{e:def-I 2} with a cleverly chosen function $\tilde \CK^{\mathbf{n}}(x)$ (see below) which satisfies $\supp \tilde \CK^{\mathbf{n}}\subset \CD(T,\ell)$ and integrates to the same value. We then write
\begin{equ}  
|\CI_\lambda^G(K,\kappa)| \lesssim
\sum_{(T,\ell)\in\CT_\lambda(\CV)} 
\Big(\prod_{v\in T^\circ} 2^{-\ell_v |\s|}\Big)
\sup_{\mathbf n\in\CN(T,\ell)} 
\sup_x |\tilde \CK^{{\mathbf n},T}(x)| \;,
\end{equ}
where $T^{\circ}$ is the set of inner nodes of $T$.

In \cite{KPZJeremy} the key criterion used to get the bound \eqref{e:gen-conv-suffice} is the following lemma. The distinguished node $v_{*}$ in this lemma will correspond to the internal node of $T$ which is first common ancestor of the leaves $\CV_{*}$.

\begin{lemma}(\cite[Lemma~A.10]{KPZJeremy})\label{multiclustering}
Given a coalescence tree $T$ with a fixed distinguished node $v_\star$,
and the set of labelings denoted by $\CN_\lambda(T)$ satisfying $2^{-\ell_{v_\star}}<\lambda$,
together with a function $\eta:T^{\circ} \to \R$, we write 
\begin{equ}
\CI_\lambda(\eta) = \sum_{n\in\CN_\lambda(T)} \prod_{v\in T^{\circ}}
	2^{-n_v\eta_v} \;.
\end{equ}
Furthermore suppose that the two following conditions on $\eta$ hold:
\begin{enumerate}
\item  For every $u \in T^{\circ}$, one has $\sum_{v \ge u} \eta(v) > 0 $.
\item For every $u \in T^{\circ}$ such that $u \le v_{*}$, one has 
$\sum_{v \not \ge u} \eta(v) < 0$.
\end{enumerate}
Then it follows that one has $\CI_\lambda(\eta)\lesssim \lambda^{|\eta|}$ uniform for $\lambda\in(0,1]$ where $|\eta| \eqdef \sum_{v \in T^{\circ}} \eta(v)$.
\end{lemma}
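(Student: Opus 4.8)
The plan is to prove this by a leaves-to-root elimination of the scale labels $n_v$, organised so that the two hypotheses play complementary roles. For an inner node $u\in T^\circ$ set $\Sigma_u\eqdef\sum_{v\ge u}\eta(v)$, the total weight of the subtree rooted at $u$; then $\Sigma_\rho=|\eta|$ for the root $\rho$, condition~1 reads $\Sigma_u>0$ for every $u\in T^\circ$, and condition~2 reads $\Sigma_u>|\eta|$ for every $u\in T^\circ$ with $u\le v_\star$. Recall that a labeling $n\in\CN_\lambda(T)$ is monotone, i.e. $n_v\ge n_w$ whenever $v\ge w$ (labels grow towards the leaves), and that the constraint $2^{-\ell_{v_\star}}<\lambda$ forces $n_{v_\star}\ge N$ with $N$ the least integer exceeding $\log_2\lambda^{-1}$; this is the only ``size'' constraint, since $v_\star$ is the first common ancestor of the distinguished leaves.

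The first step would be an auxiliary subtree-summation estimate: for every $u\in T^\circ$ and every integer $m\ge 0$,
\begin{equ}
\sum_{(n_v)_{v\ge u}}\ \prod_{v\ge u}2^{-n_v\eta_v}\ \lesssim\ 2^{-m\Sigma_u}\;,
\end{equ}
the sum running over all monotone labelings of the subtree at $u$ with $n_u\ge m$. I would prove this by induction from the leaves inward. If $u$ is leaf-adjacent the left side is $\sum_{n_u\ge m}2^{-n_u\eta_u}\lesssim 2^{-m\eta_u}$, convergent precisely because $\Sigma_u=\eta_u>0$. For the inductive step one applies the estimate to each inner child $c$ of $u$ (with $m$ there equal to $n_u$, since monotonicity forces $n_c\ge n_u$), producing factors $2^{-n_u\Sigma_c}$; as $\Sigma_u=\eta_u+\sum_c\Sigma_c$, the remaining sum is $\sum_{n_u\ge m}2^{-n_u\Sigma_u}\lesssim 2^{-m\Sigma_u}$, using $\Sigma_u>0$. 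Thus condition~1 is exactly what makes every ``sideways'' subtree summable, and the resulting bound depends only on finitely many geometric-series constants $(1-2^{-\gamma})^{-1}$.

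The second step applies this along the spine. Let $\rho=u_0<u_1<\dots<u_k=v_\star$ be the path from the root to $v_\star$. Summing out every subtree hanging off this path via the auxiliary estimate reduces $\CI_\lambda(\eta)$ to a sum over the spine labels alone,
\begin{equ}
\CI_\lambda(\eta)\ \lesssim\ \sum_{\substack{0\le n_{u_0}\le\dots\le n_{u_k}\\ n_{u_k}\ge N}}\ \prod_{i=0}^{k}2^{-n_{u_i}\beta_i}\;,\qquad \beta_i\eqdef\eta_{u_i}+\sum_c\Sigma_c\;,
\end{equ}
where $c$ ranges over the side-children of $u_i$ and one checks $\sum_{j\ge i}\beta_j=\Sigma_{u_i}$. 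Now I would sum the spine from the bottom up: just before summing $n_{u_i}$ the weight it carries has exponent $\beta_0+\dots+\beta_{i-1}=|\eta|-\Sigma_{u_i}$, which is \emph{negative} by condition~2 (as $u_i\le v_\star$), so each sum $\sum_{n_{u_{i-1}}\le n_{u_i}}$ is a geometric sum of growing terms dominated by its top term $\lesssim 2^{-n_{u_i}(\beta_0+\dots+\beta_{i-1})}$, which pushes all the weight up to $v_\star$. After eliminating $u_0,\dots,u_{k-1}$ the exponent of $n_{v_\star}$ has become $\beta_0+\dots+\beta_k=\Sigma_\rho=|\eta|>0$ (condition~1 at $\rho$), and the surviving sum $\sum_{n_{v_\star}\ge N}2^{-n_{v_\star}|\eta|}\lesssim 2^{-N|\eta|}\lesssim\lambda^{|\eta|}$ gives the claim.

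The main obstacle is the interlocking role of the two hypotheses, and getting this right is where the argument lives. Condition~1 alone collapses every subtree but leaves the root-to-$v_\star$ spine with partial exponents that are only $O(1)$, yielding at best $\lambda^0$; it is the \emph{strict} excess $\Sigma_{u_i}>|\eta|$ of condition~2 that makes every partial spine-exponent negative, so the spine sums evaluate upward and the single constraint $n_{v_\star}\ge N$ finally converts into the gain $\lambda^{|\eta|}$. A minor point to handle carefully is the combinatorial identity $\sum_{j\ge i}\beta_j=\Sigma_{u_i}$ that dovetails the two reductions, together with the degenerate case $v_\star=\rho$, in which the spine is a single node and only condition~1 is used.
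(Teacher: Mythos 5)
Your argument is correct. Note, however, that the paper does not actually prove this lemma: it is quoted verbatim from \cite{KPZJeremy} (Lemma~A.10 there), and the authors explicitly remark that, since the statement concerns only coalescence trees and not the graph $(\CV,\CE)$, no re-proof is needed. So there is no in-paper proof to compare against; what you have supplied is a self-contained proof of the cited result, and it follows the standard multiscale-clustering mechanism that underlies it: geometric sums controlled from below by subtree weights, and from above along the spine by co-subtree weights. Concretely, your two-step organisation is sound. The auxiliary estimate $\sum_{n_u\ge m}\prod_{v\ge u}2^{-n_v\eta_v}\lesssim 2^{-m\Sigma_u}$ is a clean induction whose only input is condition~1 at every node of the off-spine subtrees; the bookkeeping identity $\sum_{j\ge i}\beta_j=\Sigma_{u_i}$ is right (the children of $v_\star$ all count as side-children, so $\beta_k=\Sigma_{v_\star}$); the upward elimination along the spine uses $\beta_0+\dots+\beta_{i-1}=|\eta|-\Sigma_{u_i}<0$, which invokes condition~2 only at $u_1,\dots,u_k$ and never at the root (where it would be vacuous), and the final sum $\sum_{n_{v_\star}\ge N}2^{-n_{v_\star}|\eta|}\lesssim\lambda^{|\eta|}$ uses condition~1 at the root, with all implied constants depending only on $T$ and on the gaps $\Sigma_u>0$ and $\Sigma_{u_i}-|\eta|>0$, hence uniform in $\lambda\in(0,1]$. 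You also correctly isolate the degenerate case $v_\star=\rho$. The only cosmetic flaw is an index slip in the spine step (the negative exponent $\beta_0+\dots+\beta_{i-1}$ governs the geometric sum over $n_{u_{i-1}}$, whose top term is evaluated at $n_{u_i}$), but the inequality you use is the right one, so nothing breaks.
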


Since Lemma~\ref{multiclustering} is a result only about coalescence trees, and has nothing to do with the graph $(\CV,\CE)$, we do not need to re-prove this lemma.

The goal is to show that Assumption \ref{ass:big-graph} implies that for any labeled tree $(T,\ell) \in \mathcal{T}_{\lambda}(\CV)$ we can find an $\eta: T^{\circ} \mapsto \R$ such that: (i) the bound
\[
|\CI^{\mathbf{n},G}_\lambda(K,\kappa)|
\lesssim
\prod_{v \in T^{\circ}}
2^{-n_v\eta(v)}
\]
holds uniform in $\mathbf{n} \in \CN(T,\ell)$ and (ii) the above function $\eta$ satisfies the two conditions in Lemma~\ref{multiclustering}.

%


\subsubsection*{Definition of $\eta$ and proof of the theorem}

As in \cite{KPZJeremy} let $A^{-} \subset \CE$ be the subset of edges $e$ with $r_e < 0$
such that
$e_\uparrow$ only has two descendants $e_- $ and $e_+$ in the tree $T$. 
Given any edge $e =(e_-,e_+)$ and any $r > 0$, we define an operator $\mathscr Y_e^r$ acting
on sufficiently smooth functions $V \colon \R^{\CV} \to \R$ by
\begin{equ}
\bigl(\mathscr Y_e^r V\bigr)(x) = V(x) - \sum_{|k|_\s < r} {(x_{e_+} - x_{e_-})^k \over k!} \bigl(D_{e_+}^k V\bigr)(P_e(x))\;,
\end{equ}
where $D_{e_+}$ is differentiation with respect to the coordinate $x_{e_+}$ and $\bigl(P_e(x)\bigr)_v=x_v$ if $v \neq e_+$ and $ \bigl(P_e(x)\bigr)_v=x_{e_-}$ if $v = e_+$ (in other words it turns $x_{e_+}$ to $x_{e_-}$).

We replace the integrand in \eqref{e:sum-trees} by
\begin{equ}
\tilde\CK^{(\mathbf n)}(x) =
\Big( \mathscr Y_{e^{(k)}}^{r_{e^{(k)}}}
	\cdots \mathscr Y_{e^{(1)}}^{r_{e^{(1)}}} 
	\Big( \prod_{e\in\CE_2 \setminus A^-}K_e^{(\mathbf n_e)}
	\prod_{e\in\CE_h}\kappa_e^{(\mathbf n_e)} \Big)
\Big) (x)
\prod_{e\in A^-} K_e^{(\mathbf n_e)} (x_{e_-},x_{e_+})
\end{equ}
where $A^- =\{e^{(1)},\dots,e^{(k)}\}$.
By our assumption (see assumption (b) in the beginning of this subsection), if $e\in A^-$ intersects a hyper-edge, then the intersection is the single vertex $e_-$. Therefore the operator $\mathscr Y_e^r$ leaves $\kappa_e^{(\mathbf n_e)}$ unchanged, which is very important in the following proofs.

Define
\begin{equs} \label{e:def-eta}
\tilde \eta(v) & =|\s|+\sum_{e\in\CE}\tilde \eta_e(v) \;, \\
\tilde \eta_e(v)&= \eta_e(v) 
+ |r_e|\one_{e\in A^-} \big(\one_{e_\uparrow}(v) -\one_{e_\Uparrow}(v)\big)
\end{equs}
where
\begin{equs}
\eta_e(v) & =-a_e \one_{e_\uparrow}(v)
 +r_e \big(\one_{e_+\wedge 0}(v)-\one_{e_\uparrow}(v) \big) 
	\one_{r_e>0,e_+\wedge 0 >e_\uparrow} \\
& \quad+(1-r_e-a_e) \big(\one_{e_-\wedge 0}(v)-\one_{e_\uparrow}(v) \big) 
	\one_{r_e>0,e_-\wedge 0 >e_\uparrow} \;.
\end{equs}
Although the definition looks the same as in \cite{KPZJeremy},
the $e\in\CE$ here may be a hyper-edge, and in the case $e\in\CE_h$
we have $\tilde \eta_e(v) = -{1\over 2}|\s||e| \one_{e_\uparrow}(v)$.
In other words for a hyper-edge $e=\{v_1,\dots,v_n\}$,
we add a weight of $\alpha=|\s|n/2$ to the first common ancestor of $v_1,\dots,v_n$.

\begin{lemma}
The functions $\tilde\CK^{(\mathbf n)}$ satisfy the bound
\begin{equ}\label{e:sup-bound}
\Big(\prod_{v\in T^\circ} 2^{-\ell_v |\s|}\Big)
\sup_{\mathbf n\in\CN(T,\ell)} 
\sup_x |\tilde \CK^{(\mathbf n)}(x)| 
\lesssim \prod_{v\in T^\circ} 2^{-\ell_v \tilde\eta(v)}
\end{equ}
uniform in $\mathbf{n} \in\CN(T,\ell)$.
\end{lemma}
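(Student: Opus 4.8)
The plan is to establish the pointwise bound \eqref{e:sup-bound} by estimating $|\tilde\CK^{(\mathbf n)}(x)|$ as a product of single-edge contributions, following the proof in \cite{KPZJeremy} and recording only the modifications forced by the hyper-edges. First I would observe that, because of the cut-offs $\Psi^{(\cdot)}$ built into the $\hat K_e^{(\mathbf n_e)}$ and into the $\kappa_e^{(\mathbf n_e)}$, the integrand $\tilde\CK^{(\mathbf n)}$ is supported on $\CD(T,\ell)$, so that on its support all pair distances are pinned down by the tree: $|x_v-x_w|\sim 2^{-\ell_{v\wedge w}}$ for $v,w\in\CV_0$, in particular $|x_{e_+}-x_{e_-}|\sim 2^{-\ell_{e_\uparrow}}$ for $e\in\CE_2$, while the $|e|$ points of a hyper-edge $e$ lie in $\CD(\bar T,\bar\ell)$, where $(\bar T,\bar\ell)\in\mathcal{T}_{|e|}$ is the coalescence tree induced on the leaves of $e$ by $(T,\ell)$ and its root carries the label $\ell_{e_\uparrow}$.

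With this in hand the single-edge estimates are as follows. For $e\in\CE_2$, the bounds on $\hat K_e^{(\mathbf n_e)}$ and its derivatives are exactly those of \cite[Lemma~A.4, A.5]{KPZJeremy}, which on $\mathbf n\in\CN(T,\ell)$ translate, for $e\in\CE_2\setminus A^-$, into the factor $\prod_{v\in T^\circ}2^{-\ell_v\eta_e(v)}$ with $\eta_e$ as in \eqref{e:def-eta}; this single formula captures both the plain bound $2^{a_e\ell_{e_\uparrow}}$ when $r_e\le 0$ and the gain from the positive renormalisation \eqref{e:KhatPos} in the regimes $e_\pm\wedge 0>e_\uparrow$. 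For a hyper-edge $e=\{v_1,\dots,v_{|e|}\}\in\CE_h$, Definition~\ref{def:hyperhomo} applied to $(\bar T,\bar\ell)$ gives directly $|\kappa_e^{(\mathbf n_e)}(e)|\lesssim 2^{a_e\ell_{e_\uparrow}}\|\kappa_e\|_{a_e}$ with $a_e=|e||\s|/2$, which equals $\prod_{v\in T^\circ}2^{-\ell_v\tilde\eta_e(v)}$ for the value $\tilde\eta_e(v)=-\tfrac12|\s||e|\,\one_{e_\uparrow}(v)$ recorded in \eqref{e:def-eta}.

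Next I would handle the Taylor-subtraction operators $\mathscr Y_e^{|r_e|}$, $e\in A^-$, applied to $\prod_{e\in\CE_2\setminus A^-}\hat K_e^{(\mathbf n_e)}\prod_{e\in\CE_h}\kappa_e^{(\mathbf n_e)}$. Since, by definition of $A^-$, the node $e_\uparrow$ has only the two descendants $e_-$ and $e_+$, each $\mathscr Y_e^{|r_e|}$ differentiates only the factors $\hat K_{e'}^{(\mathbf n_{e'})}$ incident to $e_+$, evaluated at the configuration $P_e(x)$ obtained by collapsing the leaf $e_+$ onto $e_-$; here assumption~(b) on the edge labels is crucial, since $e\cap\bar e=\{e_-\}$ for any hyper-edge $\bar e$ means $x_{e_+}$ does not occur in any $\kappa_{\bar e}$, and the hyper-edge factors therefore pass unchanged through every $\mathscr Y_e^{|r_e|}$ — in particular no derivative norms of cumulant kernels are ever needed. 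The Taylor remainder estimate then supplies the gain $|x_{e_+}-x_{e_-}|^{|r_e|}\sim 2^{-|r_e|\ell_{e_\uparrow}}$, whereas each differentiated regular kernel $\hat K_{e'}^{(\mathbf n_{e'})}$, $e'\ni e_+$, $e'\neq e$, is charged at the scale $\ell_{e'_\uparrow}$ of the collapsed configuration; since $e_\uparrow$ has no further descendants one has $e'_\uparrow\le e_\Uparrow$, so the loss is at most $2^{|r_e|\ell_{e_\Uparrow}}$ and the net effect of $\mathscr Y_e^{|r_e|}$ is the factor $\prod_{v\in T^\circ}2^{-\ell_v|r_e|(\one_{e_\uparrow}(v)-\one_{e_\Uparrow}(v))}$ appearing in the second line of \eqref{e:def-eta}; the edges $e\in A^-$ themselves keep their plain kernels, contributing $2^{a_e\ell_{e_\uparrow}}=\prod_{v\in T^\circ}2^{-\ell_v\eta_e(v)}$. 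Multiplying all single-edge contributions together with the factor $\prod_{v\in T^\circ}2^{-\ell_v|\s|}$ from the left-hand side of \eqref{e:sup-bound}, which accounts for the $|\s|$ term in $\tilde\eta(v)$, reproduces $\prod_{v\in T^\circ}2^{-\ell_v\tilde\eta(v)}$; since the estimate is uniform in $\mathbf n\in\CN(T,\ell)$, the supremum over $\mathbf n$ changes nothing.

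I expect the main difficulty to lie in the third step: tracking the (in general nested) operators $\mathscr Y_{e^{(1)}}^{|r_{e^{(1)}}|},\dots,\mathscr Y_{e^{(k)}}^{|r_{e^{(k)}}|}$ so that every derivative they produce is charged at the correct node of $T$ and the resulting gains and losses combine precisely into the functions $\eta_e$ of \eqref{e:def-eta}. For the $\CE_2$-part this is carried out exactly as in \cite{KPZJeremy}, where it is the heart of the argument; the genuinely new inputs are only that assumption~(b) shields the cumulant factors $\kappa_e$ from all of these differentiations, so that each hyper-edge contributes precisely the clean bound $2^{a_e\ell_{e_\uparrow}}$ with $a_e=|e||\s|/2$ and nothing more, and that the cut-offs $\Psi^{(\cdot)}$ keep $\supp\tilde\CK^{(\mathbf n)}\subset\CD(T,\ell)$, which is what legitimises all the scale identifications used above.
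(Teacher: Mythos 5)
Your proposal is correct and takes essentially the same route as the paper's proof: the key step in both is that assumption (b) lets the operators $\mathscr Y_e^{r_e}$, $e\in A^-$, pass through the hyper-edge factors, so that $\tilde\CK^{(\mathbf n)}$ factors into the $\CE_2$-part, which is bounded exactly as in \cite{KPZJeremy}, times $\prod_{e\in\CE_h}\kappa_e^{(\mathbf n_e)}$, each factor of which is bounded by $2^{a_e\ell_{e_\uparrow}}\|\kappa_e\|_{a_e}$ via Definition~\ref{def:hyperhomo}, using that the cut-offs $\Psi^{(m_{i,j})}$ force the induced coalescence tree over the vertices of $e$ to have root $e_\uparrow$. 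Your more detailed sketch of the Taylor-remainder bookkeeping for the $\CE_2$-part is consistent with \eqref{e:def-eta} and with what the paper delegates to \cite{KPZJeremy}.
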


\begin{proof}
Since the operator $\mathscr Y_e^r$ leaves $\kappa_e^{(\mathbf n_e)}$ unchanged, the functions $\tilde\CK^{(\mathbf n)}$ can be factored as
\begin{equ} \label{e:BigKfactor}
\tilde\CK^{(\mathbf n)}=
\Big(\prod_{e\in A^-}K_e^{(\mathbf n_e)}
	\prod_{e\in\CE_h}\kappa_e^{(\mathbf n_e)}\Big)
\cdot \tilde\CK_1^{(\mathbf n)}
\end{equ}
and the last factor here is
\begin{equ}\label{e: K1}
\tilde\CK_1^{(\mathbf n)}(x) 
\eqdef
\sum_{k\in\partial A^-} 
\int_{(\mathbf R^d)^{A^-}}
 \Big(\prod_{e \in A^-} D_{x_{e_+}}^{k_e}\Big)
	\Big(\prod_{e\notin A^-} K_e^{(\mathbf n_e)} (x|y)\Big)
	 \prod_{e\in A^-} \CQ_x^{k,e}(dy_e) \;
\end{equ}
We refer to \cite[Lemma~A.16]{KPZJeremy} for the precise definition
of the notations $\partial A^-$, $x|y$ and $\CQ_x^{k,e}(dy_e)$ appearing above,
but only remark that $\tilde\CK_1^{(\mathbf n)}$ and the first product 
 on the right hand side of \eqref{e:BigKfactor} can both be bounded as in \cite{KPZJeremy} by the right hand side of \eqref{e:sup-bound} if $\tilde\eta(v)$ were defined as in \eqref{e:def-eta} with $\sum_{e\in\CE}$ replaced by $\sum_{e\in\CE_2}$. By the multiplicative structure of  the second factor, it remains to show that for each $e\in\CE_h$
\begin{equ}
\sup_{x \in \R^{d}} |\kappa^{(\mathbf n_e)}_e(x)| \lesssim \prod_{v\in T^\circ} 2^{-\ell_v \tilde \eta_e(v)}
= 2^{-\ell_{e_\uparrow} \tilde \eta_e(e_\uparrow)}
= \big(2^{\ell_{e_\uparrow}}\big)^{{1\over 2}|\s||e|} \;.
\end{equ}
This follows immediately from Definitions~\ref{def:hyperhomo} and \ref{def: def-kappa-ne}.
Note that the cut-off functions $\Psi^{(m_{i,j})}$ in  \eqref{e:def-kappa-ne}
impose that the tree $(\bar T,\bar \ell)$ over the $|e|$ vertices of $e$
induced from the tree $(T,\ell)$ has precisely $e_\uparrow$ as its root.
\end{proof}

The proof of Theorem~\ref{theo:big-bd} is finished with the following lemma.

\begin{lemma}
For each $T \in \mathcal{T}(\CV)$ the map $\tilde{\eta}: T^{\circ} \mapsto \R$ as given in \eqref{e:def-eta} satisfies the two conditions of Lemma \ref{multiclustering} and one also has $|\tilde{\eta}| = |\s| \times (|\CV| - 1) - \sum_{e \in \CE(\CV)} a_{e}$.
\end{lemma}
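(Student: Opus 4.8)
### Proof plan

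The plan is to reduce the statement to the corresponding verification in \cite{KPZJeremy}. Since $\tilde\eta$ is defined exactly as the quantity $\eta$ of \cite{KPZJeremy} except that the sum over $\CE$ now ranges over hyper-edges as well, and for each hyper-edge $e\in\CE_h$ we have established the closed form $\tilde\eta_e(v) = -\tfrac12|\s||e|\,\one_{e_\uparrow}(v)$, I would treat the contribution of $\CE_2$-edges and $\CE_h$-edges separately and add them. For the $\CE_2$ part, the two conditions of Lemma~\ref{multiclustering} and the value of $|\tilde\eta|$ restricted to that part are \emph{verbatim} what is proved in \cite{KPZJeremy}; so the real content is to check that adjoining the hyper-edge contributions preserves both inequalities and to compute their effect on $|\tilde\eta|$. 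Throughout I use that $a_e=|e||\s|/2$ and $r_e=0$ for $e\in\CE_h$ (label assumption (a)), and that Assumptions~\ref{ass:big-graph}.2--4 were precisely engineered to cover subsets $\bar\CV$ meeting hyper-edges.

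First I would record the elementary identities. For a hyper-edge $e$ with first common ancestor $e_\uparrow$ one has, for any inner node $u$,
\begin{equ}
\sum_{v\ge u}\tilde\eta_e(v) = -\tfrac12|\s||e|\,\one_{\{e_\uparrow\ge u\}}\le 0,
\qquad
\sum_{v\not\ge u}\tilde\eta_e(v) = -\tfrac12|\s||e|\,\one_{\{e_\uparrow\not\ge u\}}\le 0,
\end{equ}
and $\sum_{v\in T^\circ}\tilde\eta_e(v) = -\tfrac12|\s||e| = -a_e$. The last one immediately gives $|\tilde\eta| = |\s|(|\CV|-1) - \sum_{e\in\CE_2}a_e - \sum_{e\in\CE_h}a_e = |\s|(|\CV|-1) - \sum_{e\in\CE(\CV)}a_e$, using the known $\CE_2$-computation of \cite{KPZJeremy} and the fact that $\CE(\CV)=\CE_2\sqcup\CE_h$ here (every edge meets $\CV=\CV_0$). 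So the formula for $|\tilde\eta|$ is free.

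Next, Condition~1 of Lemma~\ref{multiclustering}: for every $u\in T^\circ$ I must show $\sum_{v\ge u}\tilde\eta(v)>0$. Write $\bar\CV$ for the set of leaves that are descendants of $u$; then $\sum_{v\ge u}\tilde\eta(v)$ equals $|\s|(|\bar\CV|-1)$ minus the edge contributions, and by the same bookkeeping as in \cite{KPZJeremy} the $\CE_2$-edges contribute at most the left side of \eqref{e:assEdges1} (applied to $\bar\CV\cup\{0\}$ if $0\notin\bar\CV$, with the usual care) while each hyper-edge $e$ with $e_\uparrow\ge u$ subtracts exactly $a_e=\sum_{e'\in\CE_0}\cdots$, i.e. contributes to $\sum_{e\in\CE_0(\bar\CV)}a_e$. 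Thus the required positivity is exactly Assumption~\ref{ass:big-graph}.3 (or \ref{ass:big-graph}.2 when $u\not\le v_\star$, i.e. when the relevant $\bar\CV$ avoids $0$ and has cardinality $\ge 3$; the cardinality-$2$ and cardinality-$1$ cases reduce to Condition~1 of Assumption~\ref{ass:big-graph}, namely $a_e-r_e^-<|\s|$, exactly as in \cite{KPZJeremy}). The key point is that a hyper-edge $e$ with $e_\uparrow\ge u$ forces all of $e\subset\bar\CV$, so it genuinely appears in $\CE_0(\bar\CV)$ and the inequalities of Assumption~\ref{ass:big-graph} already account for it.

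Finally, Condition~2: for $u\le v_\star$ I must show $\sum_{v\not\ge u}\tilde\eta(v)<0$. Let $\bar\CV$ be the leaves \emph{not} below $u$; since $u\le v_\star$ and $v_\star$ is the common ancestor of $\CV_\star$, we have $\bar\CV\subset\CV\setminus\CV_\star$. The $\CE_2$-part of $\sum_{v\not\ge u}\tilde\eta(v)$ is bounded above, as in \cite{KPZJeremy}, by $|\s||\bar\CV|$ minus the left-hand side of \eqref{e:assEdges2}; and each hyper-edge $e$ with $e_\uparrow\not\ge u$ — equivalently $e\cap\bar\CV\ne\emptyset$, so $e\in\CE(\bar\CV)$ — contributes $-a_e$, which is exactly its term in the $\sum_{e\in\CE(\bar\CV)\setminus\CE^\downarrow(\bar\CV)}a_e$ sum (hyper-edges never lie in $\CE^\downarrow$). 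Hence Assumption~\ref{ass:big-graph}.4 gives strict negativity. I expect the main obstacle to be the careful accounting at the boundary cases in Condition~1 — matching the combinations $\one_{e_-\wedge 0>e_\uparrow}$, $\one_{e_+\wedge 0>e_\uparrow}$ in $\eta_e$ against the $\CE^\uparrow,\CE^\downarrow$ terms of \eqref{e:assEdges1} and handling $|\bar\CV|\le 2$ separately — but this is exactly the argument of \cite[Lemma~A.11]{KPZJeremy} verbatim, with the hyper-edge terms only ever adding a manifestly nonpositive quantity $-a_e\one_{\{e_\uparrow\ge u\}}$ on the "good" side and a term that Assumptions~\ref{ass:big-graph}.2--4 were stated to absorb; so the modifications are routine rather than substantive.
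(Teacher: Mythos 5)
Your proposal is correct and follows essentially the same route as the paper: both defer the $\CE_2$-bookkeeping verbatim to \cite{KPZJeremy} (Lemma A.19 there) and observe that each hyper-edge contributes the weight $-a_e=-\tfrac12|\s||e|$ exactly at $e_\uparrow$, so that $\sum_{v\ge u}\tilde\eta(v)$ and $\sum_{v\not\ge u}\tilde\eta(v)$ become precisely the differences of the right- and left-hand sides of \eqref{e:assEdges}/\eqref{e:assEdges1} (with $\bar\CV=L_u$, according to whether $0\in L_u$) and of \eqref{e:assEdges2} (with $\bar\CV=\CV\setminus L_u$), the hyper-edges entering through $\CE_0(\bar\CV)$ resp.\ $\CE(\bar\CV)$. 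Only note the small slip that the dichotomy for Condition~1 is $0\in L_u$ versus $0\notin L_u$, which is not equivalent to $u\le v_\star$ (and one applies \eqref{e:assEdges} to $L_u$ itself rather than \eqref{e:assEdges1} to $L_u\cup\{0\}$); this does not affect the argument.
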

\begin{proof}
The proof of \cite{KPZJeremy}[Lemma A.19] applies to our situation verbatim so we just give a sketch here. Fix $\nu \in T^{\circ}$, we write $L_{\nu} \subset \CV$ for the set of leaves which are descendants of $\nu$ in $T$. 

When one calculates $\sum_{v \ge \nu} \tilde{\eta}(v)$ the result takes three different forms. If $L_{\nu} = e$ for some $e$ with $r_{e} < 0$ then it takes the value $|\s| - a_{e} + r_{e}$. Otherwise, the value depends on whether  $ 0 \not \in L_{\nu}$ or $0 \in L_{\nu}$ - in the former case the value of the sum is given by the difference of the righthand and lefthand sides of \eqref{e:assEdges} of Assumption~\ref{ass:big-graph} while in the latter it is given by the difference of the righthand and lefthand sides of \eqref{e:assEdges1} of Assumption~\ref{ass:big-graph} - in both cases one takes $\bar{\CV} = L_{\nu}$.

On the other hand, if $\nu \le \nu_{*}$ then $\sum_{v \not \ge \nu} \tilde{\eta}(v)$ is given by the difference of the righthand and lefthand sides of \eqref{e:assEdges2} with $\bar{\CV} = \CV \setminus L_{\nu}$.
\end{proof}


\subsection{The elementary graphs}

In this subsection we will show that Assumption~\ref{ass:ele-graph} imposed on an ``elementary graph" will imply that Assumption~\ref{ass:big-graph} holds for any (much larger) graph constructed from Wick contracting the elementary graph. Basically the graphs showing in the pictures in  Section~\ref{sec:AppWZ}  are all examples of elementary graphs.

\begin{definition} \label{def:ele-graph}
An {\it elementary graph} $H$ is a connected graph  labelled with $(a_e,r_e)$ for each edge $e$ as in Subsection~\ref{sec:EntireGraph} above,
with only one {\it special} non-zero vertex $v_\star$, namely $H_\star = \{0,v_\star\}$. Additionally there is a unique {\it distinguished edge} $e_{\star}=\{0,v_\star\}$ attached to $0$ - other edges may also connect to $0$ but they are not called distinguished edges.
The set $H_0=H\setminus \{0\}$ can be decomposed as a union of two disjoint subsets of vertices
$H_0 = H_{ex} \sqcup H_{in}$ which we call external and internal vertices, respectively. 

We also enforce that $\deg(v)=1$ for every $v\in H_{ex}$ and $\deg(v)\ge 2$ for every $v\in H_{in}$. Any edge $e$ with $e \cap H_{ex} \not = \emptyset$ will be called an``external edge". Edges  $e \not = e_{\star}$ which are not external edges are called ``internal edges". 
The unique internal vertex connected to an external vertex $v$ will be denoted $i(v)$.  
We require that $v_\star \in H_{in}$, and that for every external edge $e$ one has $a_e = |\s| $. 
\footnote{In practice one can always attach a new edge with label $|\s|$ representing the Dirac function to an external vertex
to ensure this assumption holds.}
We also enforce that for all edges $e$ with $|e| = 2$ one has $a_{e} < 2|\s|$.
\end{definition}

We can construct graphs $\CV$  by {\it Wick contracting} several copies of $H$,
similarly as in \cite{CLTKPZ}, except that we now build hyper-edges over external vertices instead of identifying them. For a set $D$ we denote by $\CP(D)$ the set of partitions
of $D$.

\begin{definition} \label{def:WickContr}
Given a set $A$ and an integer $p>1$, let $\{ A^{(i)} \}_{i=1}^{p}$ be $p$ copies of $A$ and let $D$ be their $p$-fold disjoint union - that is $D=\sqcup_{i=1}^p A^{(i)}$.
For $\pi \in \CP(D)$ we say that 
\[
\pi\in \CP_w(D;A,p)\subset \CP(D)
\]
 if
for every $B\in \pi$, one cannot find $1 \le i \le p$ such that $B \subset A^{(i)}$. In other words, we enforce that every block of the partition $\pi$ must contain elements from at least two different copies of $A$. In particular, one must have $|B| > 1$. 
\end{definition}

\begin{definition} \label{def:make-big-graph}
Suppose that we are given an elementary graph $H$ and an integer $p>1$.
For $1\le i\le p$ let $H^{(i)}$ be a copy of the graph $H$.
Suppose that we are also given a partition 
$\pi\in \CP_w(\sqcup_{i=1}^p H^{(i)}_{ex};H_{ex},p)$.
From $H$, $p$, and $\pi$ we will construct a labeled graph $\mathcal{G} = (\CU,\CE)$  which will be called a {\it $p$-fold Wick contraction} of $H$.

To define the vertex set $\CU$ we first start with $\sqcup_{i=1}^p  H^{(i)}$ and then identify all the $p$ copies of the distinguished vertex $0$. The edge set is given by
\[
\CE(\CU)
\eqdef
\big(\sqcup_{i=1}^p \CE_{0} (H^{(i)}) \big) \sqcup \CE_c(\CU) \;,
\qquad \textnormal{where  }
\CE_c(\CU) \eqdef \pi.
\]
As in the last section we have the decomposition $\CE(\CU)=\CE_2(\CU) \sqcup \CE_h(\CU)$.

Each edge $e\in\CE(\CU)$ is naturally associated with a label $(a_e,r_e)$,
which is $(|\s| ,-1)$ if $e\in\CE_c(\CU) \cap \CE_2(\CU)$, 
or $(|e||\s|/2,0)$ if $e\in\CE_c(\CU)\cap \CE_h(\CU)$, 
or otherwise 
inherits the label $(a_e,r_e)$ from $H$. We also set $\CU_{0} \eqdef \CU \setminus \{0\}$ and $\CU_{\star} \subset \CU$ to be given by the set $\sqcup_{i=1}^p H_\star^{(i)}$ with all the copies of $0$ identified.
\end{definition}

While we have defined enough structure to formulate Assumption \ref{ass:big-graph} for $\mathcal{G}$, 
it turns out that this labeled graph does \emph{not} quite satisfy this assumption in general; the second inequality will be violated whenever one has a block $B = \{u,v\}$ in $\pi$ of cardinality $2$. Pictorially one then has 
\begin{equation} \label{bad-chain}
\begin{tikzpicture}
\node at (0,0) (a) {$i(u)$};
\node at (1,1) (b) {$u$};
\node at (2,1) (c) {$v$};
\node at (3,0) (d) {$i(v)$};
\draw (a) -- (b) -- (c) -- (d);
\end{tikzpicture}
\end{equation}
where as before, for an external vertex $w \in H^{(i)}_{ex}$ we denote by $i(w)$ the unique element of $H^{(i)}_{in}$ which is connected to $w$ by an edge of $\CE_{0}(H^{(i)})$. In this scenario a subset $V \subset \{u,v,i(u),i(v)\}$ with $|V| > 2$ is called a \emph{ bad chain } for $\mathcal{G}$.

The outer two edges of \eqref{bad-chain} each carry a label $(|\s|,-1)$. While they are divergent by power counting we expect them to be integrable since they represent approximate identities. The solution is to perform the integration of vertices $u$ and $v$ \emph{before} we perform our multiscale analysis. Pictorially we replace \eqref{bad-chain} by 
\begin{equ}
\begin{tikzpicture}
\node at (0,0) (a) {$i(u)$};
\node at (2,0) (d) {$i(v)$};
\draw (a)  -- (d);
\end{tikzpicture}
\end{equ}
which carries a label $(|\s|,-1)$.

\begin{definition} \label{def:make-big-graph2}
Given a a partition
$\pi\in \CP_w(\sqcup_{i=1}^p H^{(i)}_{ex};H_{ex},p)$ we define a labeled graph 
$\mathcal{G}' = (\CV,\CE'(\CV))$ which we call a reduced $p$-fold Wick contraction of $H$.
Let $\mathcal{G} = (\CU,\CE(\CU))$ denote the corresponding non-reduced $p$-fold Wick contraction as in Definition~\ref{def:make-big-graph}.  
$\mathcal{G}'$ represents the reduced graph obtained after we have integrated out the following variables \footnote{In plain words, $\CU_{rem}$ is the set of vertices like $u$ and $v$ in \eqref{bad-chain} for which we want to remove from the vertex set.}
\[
\CU_{rem} = 
\bigsqcup_{
B \in \pi \cap \CE_{2}(\CU)
}
B.
\]
The vertex set is given by $\CV = \CU \setminus \CU_{rem}$, while the edge set is given by 
\begin{equation*}
\begin{split}
\CE'(\CV) &\eqdef 
\left\{
e \in \sqcup_{i=1}^p \CE_{0} (H^{(i)})
:\ e \cap \CU_{rem} = \emptyset
\right\} \sqcup 
\CE_{c}'(\CV),\\
\textnormal{where} \quad
\CE_{c}'(\CV) 
\eqdef&
\Big(\pi \setminus \CE_{2}(\CU) \Big)
\sqcup 
\{
\{i(u),i(v)\} :\ 
\{u,v\} \in \pi \cap \CE_{2}(\CU) 
\}.
\end{split}
\end{equation*}

As before, we write $\CE'(\CV) = \CE'_{2}(\CV) \sqcup \CE'_{h}(\CV)$. The edges of $\CE'_{c}(\CV) \cap \CE'_{2}(\CV)$ are given the label $(|s|,-1)$. Since $\CE'(\CV) \setminus [\CE'_{c}(\CV) \cap \CE'_{2}(\CV)] \subset \CE(\CU)$ we can just let all the other edges inherit their labeling from $\mathcal{G}$.

Finally we let $\CV_0 = \CV \setminus \{0\}$ and $\CV_\star = \CU_{\star}$ and also define a retraction map $\mathfrak{r}: \CU \mapsto \CV$ associated to $\mathcal{G}$ given by
\[
\mathfrak{r}(a)
\eqdef
\begin{cases}
i(a) & \textnormal{ if }a \in \CU_{rem},\\
a & \textnormal{ otherwise }.
\end{cases}
\]
\end{definition}

We now state our counterpart of Assumption~\ref{ass:big-graph} for the elementary graphs $H$. For various sets of edges we recall the notation \eqref{e:E0E}.

\begin{assumption}\label{ass:ele-graph} 
The labelled graph $(H,\CE)$ satisfies the following properties.
\begin{itemize}\itemsep0em
\item[1.] For every edge $e \in \CE_2$, one has $a_e - r_e^- < |\s|$ where $r_e^- = -(r_e \wedge 0)$.
\item[2.] For every subset
$\bar H \subset H_0$ of cardinality at least $3$, one has 
\begin{equ} \label{e:aEdges} \tag{H.2}
\sum_{e \in \CE_0(\bar H)} a_e 
< |\s|\,\Big(|\bar H_{in}| +{1\over 2} (|\bar H_{ex}| -1 -\one_{\bar H_{ex}=\emptyset}) \Big) \;.
\end{equ}
\item[3.] For every subset 
$\bar H \subset H$ containing $0$ and of cardinality at least $2$, one has 
\begin{equ}\label{e:aEdges1} \tag{H.3}
\sum_{e \in \CE_0(\bar H)} a_e 
	+ \sum_{e \in \CE^\uparrow( \bar H)}(a_e + r_e - 1)
	 - \sum_{e \in \CE^\downarrow(\bar H)} r_e 
< |\s|\,\Big(|\bar H_{in} | + \frac{1}{2}| \bar H_{ex}|\Big) \;.
\end{equ}
\item[4.] For every non-empty subset $\bar H \subset H\setminus H_\star$,
one has the bounds
\begin{equ} \label{e:aEdges2} \tag{H.4}
\sum_{e \in \CE(\bar H) \setminus \CE^\downarrow(\bar H)} a_e  
+ \sum_{e \in \CE^\uparrow(\bar H)} r_e
- \sum_{e \in \CE^\downarrow(\bar H)} (r_e-1) 
	> |\s|\,\Big(|\bar H_{in}| +{1\over 2} |\bar H_{ex}| \Big) \;.
\end{equ}
\end{itemize}
\end{assumption}


Our goal is to show that Assumption \ref{ass:ele-graph} on an elementary graph $H$ will imply assumptions \ref{ass:big-graph} for any reduced $p$-fold Wick contraction built from $H$. It is more convenient to instead prove a weaker version of Assumption \ref{ass:big-graph} for the corresponding non-reduced Wick contraction. We have the following lemma which is a straightforward consequence of our definitions. 

\begin{lemma}\label{lemma:UV}
Let $H$ be an elementary graph,  $\mathcal{G} = (\CU,\CE(\CU))$ be a $p$-fold Wick contraction of $H$, and $\mathcal{G}' = (\CV,\CE'(\CV))$ be the corresponding reduced $p$-fold Wick contraction. Suppose that $\mathcal{G}$ satisfies items 1,3, and 4 of Assumption \ref{ass:big-graph} and that Eq.~(A.2) holds
for every subset
$\bar \CU \subset \CU_0$ of cardinality at least $3$ which is not a bad chain. 
Then the graph $\mathcal{G}'$ satisfies Assumption \ref{ass:big-graph}.
\end{lemma}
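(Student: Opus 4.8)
The plan is to verify the four items of Assumption~\ref{ass:big-graph} for the reduced graph $\mathcal{G}'$ one at a time, reducing each to the corresponding property of $\mathcal{G}$ by ``re-inflating'' the contracted chains. Item~1 is essentially immediate: an edge of $\CE'_2(\CV)$ is either inherited from $\mathcal{G}$ (where $a_e - r_e^- < |\s|$ holds by hypothesis) or one of the new edges $\{i(u),i(v)\}$ carrying label $(|\s|,-1)$, for which $a_e - r_e^- = |\s| - 1 < |\s|$. A preliminary observation I will use repeatedly: every external edge of each copy $H^{(i)}$ has $a_e = |\s|$ by Definition~\ref{def:ele-graph}, so item~1 of Assumption~\ref{ass:big-graph} applied to $\mathcal{G}$ already forces $r_e < 0$ for such edges; the same holds for the cumulant two-point edges and for the reduced edges, all of which have $r_e = -1$. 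Hence \emph{every edge touched by the reduction has $r_e<0$ and so never lies in any set $\CE^\uparrow(\cdot)$ or $\CE^\downarrow(\cdot)$}, which is what will make the weight bookkeeping below exact.

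For item~2, I would fix $\bar\CV \subset \CV_0$ with $|\bar\CV| \ge 3$ and form $\bar\CU \subset \CU_0$ by adjoining, for each block $\{u,v\}\in\pi$ of cardinality two with \emph{both} $i(u),i(v) \in \bar\CV$, the two vertices $u,v$. Writing $m$ for the number of such blocks, one has $|\bar\CU| = |\bar\CV| + 2m$; and comparing, for each inflated chain, the single weight-$|\s|$ edge $\{i(u),i(v)\}$ present in $\mathcal{G}'$ with the three weight-$|\s|$ edges $\{i(u),u\},\{u,v\},\{v,i(v)\}$ lying inside $\bar\CU$ in $\mathcal{G}$ (all other edges being unchanged), one gets $\sum_{e \in \CE_0(\bar\CU)} a_e = \sum_{e \in \CE'_0(\bar\CV)} a_e + 2m|\s|$. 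The crucial point is that $\bar\CU$ can never be a bad chain: the removed vertices of $\bar\CU$ come in pairs (both members of one block, or neither), while a bad chain is contained in $\{u^\ast,v^\ast,i(u^\ast),i(v^\ast)\}$ for a single block of cardinality two, in which at most the pair $\{u^\ast,v^\ast\}$ can be removed vertices; thus $\bar\CV = \bar\CU$ or $\bar\CV = \bar\CU \setminus \{u^\ast,v^\ast\}$, and either way $\bar\CV \subset \{i(u^\ast),i(v^\ast)\}$, contradicting $|\bar\CV| \ge 3$. Hence the partial form of \eqref{e:assEdges} assumed for $\mathcal{G}$ applies to $\bar\CU$, and the two displays above turn it into \eqref{e:assEdges} for $\bar\CV$.

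Items~3 and~4 go through by the same mechanism. For item~3 I would use, for $\bar\CV \ni 0$ with $|\bar\CV| \ge 2$, exactly the same lift $\bar\CU$ (which still contains $0$ and has at least two elements): since the reduced, external and cumulant edges have $r_e < 0$ they never enter $\CE^\uparrow(\cdot)$ or $\CE^\downarrow(\cdot)$, so only blocks with both internal endpoints in $\bar\CV$ affect either side of \eqref{e:assEdges1}; the left-hand side again grows by $2m|\s|$ and $|\bar\CV|$ by $2m$, and \eqref{e:assEdges1} for $\mathcal{G}$ at $\bar\CU$ yields it for $\bar\CV$. For item~4, given a non-empty $\bar\CV \subset \CV \setminus \CV_\star$, I would instead take the larger lift $\bar\CU = \bar\CV \cup \{w \in \CU_{rem} : i(w) \in \bar\CV\}$ (still contained in $\CU \setminus \CU_\star$, as removed vertices are external and hence never special): a block of cardinality two contributes the edge $\{i(u),i(v)\}$ to the side of $\mathcal{G}'$ whenever it meets $\bar\CV$, and to the side of $\mathcal{G}$ it contributes exactly those of $\{i(u),u\},\{u,v\},\{v,i(v)\}$ meeting $\bar\CU$, all of which fall in $\CE(\bar\CU) \setminus \CE^\downarrow(\bar\CU)$ since $r_e < 0$; a short check of the two cases (one or both of $i(u),i(v)$ in $\bar\CV$) shows that the left-hand side of \eqref{e:assEdges2} and $|\bar\CV|$ increase by the same multiple of $|\s|$ under the lift, so \eqref{e:assEdges2} for $\mathcal{G}$ at $\bar\CU$ implies it for $\bar\CV$.

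The routine ingredients are that $\CE'_h(\CV) = \CE_h(\CU)$ with unchanged labels and that membership and orientation of inherited edges are undisturbed by the lifts (both immediate from Definition~\ref{def:make-big-graph2}), together with the arithmetic, which closes exactly because each inflated (half-)chain contributes precisely one extra vertex and one extra weight-$|\s|$ edge. The step I expect to require real care is the bad-chain verification inside item~2 --- it is precisely there that the weakened hypothesis on $\mathcal{G}$ (namely \eqref{e:assEdges} only for subsets that are not bad chains) is still enough to conclude.
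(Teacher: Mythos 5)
Your proposal is correct and follows essentially the same route as the paper: you re-inflate each subset of $\CV$ to a subset of $\CU$, observe that the difference of the two sides of each inequality is unchanged because every edge touched by the reduction carries $a_e=|\s|$ and $r_e<0$ (so the $\CE^\uparrow,\CE^\downarrow$ sums are unaffected), and exclude bad chains using $|\bar\CV|\ge 3$ --- exactly the two points in the paper's proof, which uses the retraction preimage $\mathfrak{r}^{-1}(\bar\CV)$ as the lift. Your choice of adjoining only complete pairs for items 2 and 3 (rather than $\mathfrak{r}^{-1}(\bar\CV)$ throughout) is an inessential variant; the bookkeeping closes either way.
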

\begin{proof} The fact that $\mathcal{G}'$ will satisfy item $1$ is quite clear so we focus on the other items. 
The first key point is that for any of the conditions 2,3, and 4 of Assumption~\ref{ass:big-graph}, given an appropriate $\bar{\CV} \subset \CV$ the difference between the LHS and RHS's of the needed inequality remains the same if one replaces $\bar{\CV}, \CE'(\bar{\CV}), \CE_{0}'(\bar{\CV})$, and $\CE^{'\downarrow}(\bar{\CV})$ by $\bar{\CU} \eqdef \mathfrak{r}^{-1}(\bar \CV)$, $\CE(\bar{\CU}), \CE_{0}(\bar{\CU})$, and $\CE^{\downarrow}(\bar{\CU})$, respectively - one must have $ |\bar{\CU}| = 2n$ in which case making this switch increases both the LHS and RHS by  $2n|\s|$. 
The second point is that for $|\bar{\CV}| > 2$ the set $\mathfrak{r}^{-1}(\bar \CV)$ will not be a bad chain.
\end{proof}
We will denote the $p$ copies of $H$ by $H^{(1)},\dots,H^{(p)}$ and write $H^{(j)}_{in}$ and $H^{(j)}_{ex}$ for the sets of internal vertices and external vertices of these copies.
\begin{theorem} \label{theo:ele-graph}
Let $H$ be an elementary graph satisfying Assumption~\ref{ass:ele-graph}. Let $p \ge 2$, and let $(\CV,\CE)$ be a reduced $p$-fold Wick contraction of $H$ constructed as in Definition~\ref{def:make-big-graph2}.  Then $(\CV,\CE)$ satisfies Assumption~\ref{ass:big-graph}.
In particular, the conclusion \eqref{e:gen-conv-thm}  of Theorem~\ref{theo:big-bd} holds.
\end{theorem}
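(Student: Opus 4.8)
The plan is to verify the four conditions of Assumption~\ref{ass:big-graph} for a reduced $p$-fold Wick contraction $(\CV,\CE)$ by pulling them back, via Lemma~\ref{lemma:UV}, to conditions on the non-reduced contraction $\mathcal{G}=(\CU,\CE(\CU))$, and then reducing those to the four items of Assumption~\ref{ass:ele-graph} on a single copy of $H$. So the first step is to invoke Lemma~\ref{lemma:UV}: it suffices to check that $\mathcal{G}$ satisfies items~1,~3,~4 of Assumption~\ref{ass:big-graph} for \emph{all} admissible subsets, and item~2 (inequality \eqref{e:assEdges}) for every $\bar\CU\subseteq\CU_0$ with $|\bar\CU|\ge 3$ that is \emph{not} a bad chain. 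Item~1 is immediate since every edge of $\CE(\CU)$ either inherits $(a_e,r_e)$ from $H$ (where item~1 of Assumption~\ref{ass:ele-graph} applies) or is a contraction edge with label $(|\s|,-1)$ or $(|e||\s|/2,0)$, all of which trivially satisfy $a_e-r_e^-<|\s|$ (for the hyper-edge case one uses $|e|\ge 3$, so $a_e=|e||\s|/2\ge\tfrac32|\s|$ fails $<|\s|$ — wait, this needs care: contraction hyper-edges have $r_e=0$ so $r_e^-=0$ and we would need $a_e<|\s|$, which is false; but item~1 only constrains $e\in\CE_2$, and hyper-edges lie in $\CE_h$, so there is no issue).

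The core combinatorial step is the following bookkeeping. Given $\bar\CU\subseteq\CU$, decompose it copy-by-copy: $\bar\CU\cap H^{(i)}$ meets the external vertices $H^{(i)}_{ex}$ in some set and the internal vertices $H^{(i)}_{in}$ in some set. The edges of $\CE_0(\bar\CU)$ split into (a) edges internal to some copy, i.e. elements of $\CE_0(\bar H^{(i)})$ where $\bar H^{(i)}\eqdef\bar\CU\cap H^{(i)}$ (with $0$ adjoined when $0\in\bar\CU$), and (b) contraction edges $B\in\pi$ with $B\subseteq\bar\CU$. For the latter, a contraction edge $B$ with $|B|=m$ contributes $a_B=m|\s|/2$, and $B\subseteq\bar\CU$ forces all $m$ of its endpoints — all external vertices — to lie in $\bigsqcup_i \bar H^{(i)}_{ex}$. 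Summing item~2 of Assumption~\ref{ass:ele-graph} over each copy gives $\sum_i\sum_{e\in\CE_0(\bar H^{(i)})}a_e < |\s|\sum_i\bigl(|\bar H^{(i)}_{in}|+\tfrac12(|\bar H^{(i)}_{ex}|-1-\one_{\bar H^{(i)}_{ex}=\emptyset})\bigr)$, and we must show the extra contribution $\sum_{B\in\pi,\,B\subseteq\bar\CU}m_B|\s|/2$ from the contraction edges, together with the leftover $|\s|/2$ terms, does not exceed the deficit between $|\s|(|\bar\CU|-1)$ (or the RHS of \eqref{e:assEdges1}) and the per-copy sum. The key arithmetic identity is that a block $B\subseteq\bigsqcup_i\bar H^{(i)}_{ex}$ with $|B|=m$ uses up $m$ external vertices but contributes only $m|\s|/2$ to the LHS, while those $m$ external vertices would have contributed (as part of the per-copy count, after the $-1$ corrections) roughly $m|\s|/2$ to the RHS — so the contraction is exactly balanced, and the strict inequality is inherited from the strictness per copy (using that at least one copy has a nonempty relevant subset, or handling the degenerate cases directly). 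The reason bad chains must be excluded is precisely that for a bad chain $\bar\CU$ the per-copy sets $\bar H^{(i)}$ are too small (each is a single edge's worth) for the $-1$ correction terms to be absorbed, so the strict inequality degenerates to equality.

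For items~3 and~4 the same copy-by-copy decomposition applies, but one must additionally track the up/down edge sets $\CE^\uparrow,\CE^\downarrow$ and the contribution of the distinguished vertex $0$. Here one uses that every contraction edge has $r_e\le 0$, hence lies in neither $\CE^\uparrow(\bar\CU)$ nor $\CE^\downarrow(\bar\CU)$ (which by assumption on $r_e$ are subsets of $\CE_2$ with $r_e>0$); so contraction edges only ever appear through their $a_e$ contribution in $\CE_0(\bar\CU)$ or $\CE(\bar\CU)$, never in the $\sum(a_e+r_e-1)$ or $\sum r_e$ sums. This means the up/down terms in \eqref{e:assEdges1} and \eqref{e:assEdges2} decompose cleanly as sums over copies of the corresponding terms in \eqref{e:aEdges1} and \eqref{e:aEdges2}, and the balance argument for the $a_e$ part goes exactly as for item~2. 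For item~4 one notes $\bar\CU\subseteq\CU\setminus\CU_\star$ means $0\notin\bar\CU$ and no special vertex is in $\bar\CU$, so it decomposes into subsets $\bar H^{(i)}\subseteq H^{(i)}\setminus H^{(i)}_\star$ and \eqref{e:aEdges2} applies per copy, with the contraction edges $e\in\CE(\bar\CU)$ (those meeting $\bar\CU$ but not contained in it, plus those contained in it) contributing their $a_e$; since a contraction edge meeting $\bar\CU$ has $a_e=|e||\s|/2\ge$ (number of its endpoints in $\bar\CU$)$\,\times|\s|/2$, these contributions only help the $>$ direction. Finally, once Assumption~\ref{ass:big-graph} is verified, Theorem~\ref{theo:big-bd} applies directly to give \eqref{e:gen-conv-thm}.

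The main obstacle I expect is the precise bookkeeping of the $\tfrac12$-weighted external-vertex terms and the $-1-\one_{\bar H_{ex}=\emptyset}$ corrections in \eqref{e:aEdges}: one must check that when external vertices from several copies are merged into one hyper-edge block $B$, the sum of the per-copy RHS allowances for those vertices is \emph{at least} $a_B$ plus whatever is needed to keep a strict inequality, and in particular that the "$-1$" we lose per nonempty copy is compensated because a block $B$ spans at least two copies (so $|B|\ge 2$, giving slack $(|B|/2)|\s|$ against a loss of at most $(|B|)\cdot\tfrac12|\s|$ from the $-1$ corrections plus the deficit relative to $|\s|(|\bar\CU|-1)$) — tracking these constants carefully, and correctly handling the edge cases where some $\bar H^{(i)}$ is empty or consists of internal vertices only, is the delicate part. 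The bad-chain exclusion is exactly the hypothesis that rules out the one family where this compensation fails, so no separate argument is needed there beyond Lemma~\ref{lemma:UV}.
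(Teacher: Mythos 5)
Your proposal follows essentially the same route as the paper: reduce via Lemma~\ref{lemma:UV} to the non-reduced contraction, decompose each admissible subset copy-by-copy, apply the per-copy items of Assumption~\ref{ass:ele-graph}, and absorb the contraction (hyper-)edges using the $|\s|/2$-per-external-vertex slack, noting that contraction edges never enter $\CE^\uparrow$/$\CE^\downarrow$ since $r_e\le 0$. The ``delicate bookkeeping'' you defer is exactly what the paper's proof carries out --- a reduction to connected $\bar\CU$ followed by a case analysis over copies meeting $\bar\CU$ in $1$, $2$, or at least $3$ vertices (the small copies are where \eqref{e:aEdges} cannot be applied and where the bad-chain exclusion and the constraint $a_e<2|\s|$ enter) --- but no idea beyond those in your outline is needed.
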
 
\begin{proof}
Let $(\CU,\CE(\CU))$ be as in Definition~\ref{def:make-big-graph}, namely, the $p$-fold Wick contraction from which $(\CV,\CE'(\CV))$ is constructed.
It suffices to prove that $(\CU,\CE(\CU))$ satisfies the assumptions of Lemma~\ref{lemma:UV}.

Item 1 of the assumption on $(H,\CE)$, together with the definition of the labels for edges in $\CE_c(\CU)$, obviously implies item 1 of the assumption on $(\CU,\CE)$.

To prove item $2$ let
$\bar \CU\subset \CU_0$ be vertex set of cardinality at least $3$ which is not a bad chain. We aim to show \eqref{e:assEdges}. 
We first show that it is sufficient to treat the case where $\bar\CU$ is {\it connected}. 
First we claim it suffices to treat the case where no connected component of $\bar{\CU}$ is a bad chain - if there is such at least one such connected component it suffices to prove \eqref{e:assEdges} for the union of all the other connected components of $\bar \CU$.

If $\bar \CU$ has connected components of size $1$ then it suffices to check \eqref{e:assEdges} for the smaller vertex set where one drops these components; and the same holds for components of size $2$ 
(here one uses the assumption $|a_{e}| < 2 |\s|$ in Definition~\ref{def:ele-graph}). If all the components of $\bar \CU$ have cardinality at least $3$, then if 
\eqref{e:assEdges} holds for each of these components then summing up these bounds yields an even stronger bound for $\bar\CU$. This covers all the disconnected cases so we assume that $\bar \CU$ is {\it connected}.

Let $\bar H^{(i)}\eqdef \bar\CU\cap H^{(i)}$. 
Let $J_1,J_2,J_{\ge 3}$ be three disjoint subsets of $\{1,\dots,p\}$, such that
$|\bar H^{(i)}| =1$ for each $i\in J_1$, and $|\bar H^{(i)}| =2$ for each $i\in J_2$,
and $|\bar H^{(i)}| \ge 3$ for each $i\in J_{\ge 3}$. The LHS of \eqref{e:assEdges} is then
\begin{equs} [e:Proof-item2]
\sum_{e \in \CE_{0}(\bar \CU)} & a_{e}  
= \sum_{i\in J_1} \sum_{e \in \CE_{0}(\bar H^{(i)})} \!\!\!\! a_{e} 
+ \sum_{i\in J_2} \sum_{e \in \CE_{0}(\bar H^{(i)})} \!\!\!\! a_{e} 
+ \sum_{i\in J_{\ge 3}} \sum_{e \in \CE_{0}(\bar H^{(i)})} \!\! a_{e} 
+ \!\!\!\! \!\!\!\!  \sum_{e \in \CE_{0}(\bar\CU) \cap \CE_{c}(\CU)} \!\!\!\!\!\! a_{e} \\
&\!\!\!\!
\le |J_2| |\s| + \sum_{i\in J_{\ge 3}} |\s|\,\left[ |\bar H_{in}^{(i)}| 
	+{1\over 2} (|\bar H_{ex}^{(i)}| -1-\one_{\bar H_{ex}^{(i)}=\emptyset}) \right]
+ \frac{|\s|}{2} \!\!\!\!\sum_{i\in J_1\cup J_2\cup J_{\ge 3}} \!\!\!\!\! |\bar H_{ex}^{(i)}|
\end{equs}
For going to the bottom lines we use the following reasoning for each of the terms on the first line: (i) the sum over $J_1$ is obviously zero, (ii) since $|\bar\CU|\ge 3$ and is assumed to be connected, the edges involved in the $J_2$-summation are all external with labels $|\s|$, (iii) one can apply \eqref{e:aEdges} for the sum over $J_{\ge 3}$, and (iv) one has
\begin{equ}\label{cumulant bound}
\sum_{e \in \CE_{0}(\bar\CU) \cap \CE_{c}(\CU)} a_{e}
=
|\s|/2
\times
\sum_{e \in \CE_{0}(\bar\CU) \cap \CE_{c}(\CU)} |e|
\le
|\s|/2
\times
\sum_{i\in J_1\cup J_2\cup J_{\ge 3}}\  |\bar H_{ex}^{(i)}|.
\end{equ} 

Observe that if $J_{\ge 3}\neq \emptyset$ then by \eqref{e:aEdges} the inequality in \eqref{e:Proof-item2} is actually strict.

We now deal with the case where $J_1=J_2=\emptyset$ and $|J_{\ge 3}|=n \ge 1$. If $J_{\ge 3} = \{i\}$ then the last term on the first line of \eqref{e:Proof-item2} must vanish so
\begin{equ}\label{e:Proof-item2a}
\sum_{e \in \CE_{0}(\bar \CU)} a_{e}
<
|\s|\,\big(|\bar H_{in}^{(i)}| 
	+{1\over 2} (|\bar H_{ex}^{(i)}| -1-\one_{\bar H_{ex}^{(i)}=\emptyset}) \big)
\end{equ}
with the RHS being bounded above by $|\s|(|\bar{\CU}|-1)$, as desired. If $n > 1$ we must have $\bar{H}^{(i)}_{ex} \not = \emptyset$ for all $i \in J_{\ge 3}$ and our claim follows from the fact that
\begin{equ}\label{e:Proof - item2b}
\sum_{e \in \CE_{0}(\bar \CU)} a_{e}
<
|\s| \times \sum_{i \in J_{\ge 3}}
\left[
|\bar H_{in}^{(i)}| + 
(1/2 + 1/2)
|\bar H_{ex}^{(i)}| 
- \frac{1}{2} 
\right]
= |\s|(|\bar{\CU}| - n/2).
\end{equ}

Note that in all remaining cases one must have $\bar H_{ex}^{(i)} \not = \emptyset$ for $i \in J_{1} \sqcup J_{2} \sqcup J_{\ge 3}$. 
The case when $|J_{\ge 3}| \ge 1$ and $J_{1} \sqcup J_{2} \not = \emptyset$  follows similarly to the two cases treated above: the upper bounds in \eqref{e:Proof-item2a} and \eqref{e:Proof - item2b} will apply if we increase them by $(\frac{1}{2}|J_{1}| + \frac{3}{2}|J_{2}|) \times |\s|$ but the quantity $|\s|  (|\bar{\CU}| - 1)$ we compare them against goes up by $(|J_{1}| + 2|J_{2}|) \times |\s|$.

Henceforth we assume $J_{\ge 3} = \emptyset$. Suppose that $(|J_{1}|,|J_{2}|) = (1,1)$ or $(0,2)$. Since $\bar{\CU}$ is not a bad chain it must be the case that
$\CE_{0}(\bar\CU) \cap \CE_{c}(\CU) = \emptyset$. Then by \eqref{e:Proof-item2} one has $\sum_{e \in \CE_{0}(\bar \CU)} a_{e} \le |J_{2}| \times |\s|$ which is strictly smaller than 
$|\s| \times (|J_{1}| + 2|J_{2}| - 1)$.

In the remaining scenarios $J_{\ge 3} = \emptyset$ and $(|J_{1}|,|J_{2}|) \not = (1,1)$ or $(0,2)$ - \eqref{e:assEdges} then follows by observing that
\[
\sum_{e \in \CE_{0}(\bar \CU)} 
a_{e}
\le
|\s|
\times
\left( \frac{3}{2} \times |J_{2}| + \frac{1}{2}|J_{1}| \right)
<
(2 |J_{2}| + |J_{1}| - 1 ) \times |\s|
=|\s|
(|\bar{\CU}| - 1).
\] 

We now turn to proving that 
 \eqref{e:aEdges1}  implies  \eqref{e:assEdges1}. 
Let $\bar\CU \subset \CV$ be of cardinality at least two with $\bar \CU \ni 0$. Let $J$ be the set of $j \in \{1,\dots,p\}$ with the property that $|\bar \CU \cap H^{(j)}| \ge 2$.  We immediately have 
%
\begin{equation}\label{sum positive renorm}
\sum_{e \in \CE^\uparrow( \bar \CU)}(a_e + r_e - 1) - \sum_{e \in \CE^\downarrow(\bar \CU)} r_e 
=
\sum_{i \in J} \Big( 
\sum_{e \in \CE^\uparrow( \bar H^{(i)})} (a_e + r_e - 1) 
- 
\sum_{e \in \CE^\downarrow(\bar H^{(i)})} r_e \Big).
\end{equation}

Applying item 3 of Assumption \ref{ass:ele-graph} and the bound \eqref{cumulant bound} with the summation set replaced by $J$ gives
\begin{equation*}
\begin{split}
&\sum_{e \in \CE_{0}(\bar \CU)} a_{e} 
+
\sum_{e \in \CE^\uparrow( \bar \CU)}(a_e + r_e - 1) - \sum_{e \in \CE^\downarrow(\bar \CU)} r_e\\
\le&
\sum_{i \in J}
\Big(  \!\!\!
\sum_{e \in \CE_{0}(\bar H^{(i)})}  \!\!\!   a_{e} 
+
  \!\!\!    \sum_{e \in \CE^\uparrow( \bar H^{(i)})}  \!\!\!  (a_e + r_e - 1) 
- \!\!\!\!\!   \sum_{e \in \CE^\downarrow(\bar H^{(i)})} \!\!\!   r_e 
+
\frac{|\mathfrak{s}|}{2}
|\bar H^{(i)}_{ex}|
\Big) \\
< & |\mathfrak{s}| \times
\sum_{i \in J}
\Big(
|\bar H^{(i)}_{in}|
+
|\bar H^{(i)}_{ex}|
\Big)
= |\mathfrak{s}| \, (|\bar \CU| - 1),
\end{split}
\end{equation*}
where for the last equality we note that $0$ is neither internal nor external. 

We now show \eqref{e:aEdges2} implies \eqref{e:assEdges2}. Suppose $\bar \CU \subset \CU \setminus \CU_{\star}$. Using the bound 
\[
\sum_{e \in \CE( \bar \CU) \cap \CE_{c}(\CU)}a_{e} \ge \sum_{i=1}^{p} |\bar{H}^{(i)}_{ex}|,
\] 
a decomposition similar to \eqref{sum positive renorm}, and applying \eqref{e:aEdges2} to each non-empty $\bar{H}^{(i)}$ gives
\begin{equs}
&\sum_{e \in \CE(\bar \CU)  }
 a_{e}
+
\sum_{e \in \CE^\uparrow(\bar \CU)} r_e
- \sum_{e \in \CE^\downarrow(\bar \CU)} (r_e-1)\\
& \ge
\sum_{i=1}^{p}
\bigg[
\sum_{e \in \CE(\bar{H}^{(i)}) \setminus \CE^\downarrow(\bar{H}^{(i)})} \!\!\!\!\!\! a_e
+  \!\!\!
\sum_{e \in \CE^\uparrow(\bar{H}^{(i)})} \!\!\! r_e
- \!\!\! \sum_{e \in \CE^\downarrow(\bar{H}^{(i)})} (r_e-1)
+
\frac{|\mathfrak{s}|}{2}
|\bar{H}^{(i)}_{ex}|
\bigg]\\
& >
|\mathfrak{s}| \times
\sum_{i=1}^{p} \Big( |\bar{H}^{(i)}_{in}| +  |\bar{H}^{(i)}_{ex}|
\Big)\ 
=\ |\mathfrak{s}| \times |\bar \CU|.
\end{equs}
\end{proof}
We now state a lemma which is a partial converse to the above theorem in the case of symmetric pairings of elementary graphs. A symmetric pairing of $H$ is a special type of Wick contraction - one has $p = 2$ and $\pi = \{\{v^{(1)}, v^{(2)}\}:\ v \in H_{ex}\}$ where for $v \in H$ we write $v^{(i)}$ for $v$'s instantiation in $H^{(i)}$. 
\begin{lemma} \label{lem:GG}
For a given elementary graph $H$, let $(\CV,\CE')$ be the reduced two-fold Wick contraction corresponding to the symmetric pairing of $H$. 
Suppose that $(\CV,\CE')$ satisfies Assumption~\ref{ass:big-graph}, then $H$ satisfies 
Assumption~\ref{ass:ele-graph}.
\end{lemma}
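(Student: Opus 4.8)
The plan is to prove the contrapositive, running the bookkeeping in the proof of Theorem~\ref{theo:ele-graph} backwards. First I would record the shape of $(\CV,\CE')$ for a symmetric pairing: since every block of $\pi$ has cardinality two, all external vertices are deleted, the two copies of each external edge $\{v,i(v)\}$ (of weight $|\s|$) are replaced by a single collapsed edge $\{i(v)^{(1)},i(v)^{(2)}\}$ of label $(|\s|,-1)$, and every internal, $0$- and distinguished edge of $H$ is doubled with its label; also $\CV_\star=\{0,v_\star^{(1)},v_\star^{(2)}\}$. Assume some condition of Assumption~\ref{ass:ele-graph} fails for $H$, witnessed by $\bar H$. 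I would first normalise $\bar H$ by two moves that never weaken the violation: discard every external vertex of $\bar H$ whose internal neighbour lies outside $\bar H$, and adjoin every external vertex of $H$ whose internal neighbour lies in $\bar H_{in}$. In each of \eqref{e:aEdges}--\eqref{e:aEdges2}, the first move leaves the left-hand side fixed or lowers it by $|\s|$ while lowering the right-hand side by $\tfrac12|\s|$, and the second leaves the left-hand side fixed while raising the right-hand side by $\tfrac12|\s|$ --- here one uses that external edges carry $r_e<0$, so they never lie in $\CE^\uparrow$ or $\CE^\downarrow$. After this, $\bar H_{ex}=\{v\in H_{ex}:i(v)\in\bar H_{in}\}$.

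Next I would split on $\bar H_{ex}$ (item~1 of Assumption~\ref{ass:ele-graph} being immediate, since its non-external edges appear in $\CE'_2(\CV)$ with the same labels and its external edges satisfy $a_e-r_e^-<|\s|$ by construction). If $\bar H_{ex}=\emptyset$, the single copy $\bar H^{(1)}$ sits inside $\CV$ with all of its edges and labels intact, and with $\bar H^{(1)}\cap\CV_\star=\bar H\cap H_\star$, hence already witnesses the failure of the corresponding item of Assumption~\ref{ass:big-graph}. If $\bar H_{ex}\neq\emptyset$ then $|\bar H_{in}|\ge1$ (an all-external $\bar H$ violates none of \eqref{e:aEdges}--\eqref{e:aEdges2}), and I set $\bar\CV=\bar H_{in}^{(1)}\sqcup\bar H_{in}^{(2)}$, adjoining $0$ when $0\in\bar H$. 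The heart of the matter is that, by the normalisation, $\CE^\uparrow(\bar\CV)$ and $\CE^\downarrow(\bar\CV)$ are two disjoint copies of $\CE^\uparrow(\bar H)$ and $\CE^\downarrow(\bar H)$, while
\[
\sum_{e\in\CE'_0(\bar\CV)}a_e \;=\; 2\sum_{e\in\CE_0(\bar H)}a_e \;-\; |\s|\,|\bar H_{ex}|\;;
\]
and since $|\bar\CV|$ equals $2|\bar H_{in}|$ or $2|\bar H_{in}|+1$, the right-hand side of the relevant instance of \eqref{e:assEdges}, \eqref{e:assEdges1}, \eqref{e:assEdges2} is likewise $2$ times that of \eqref{e:aEdges}, \eqref{e:aEdges1}, \eqref{e:aEdges2} minus $|\s|\,|\bar H_{ex}|$. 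Subtracting, the margin by which $\bar\CV$ violates the big-graph inequality equals exactly twice the margin by which $\bar H$ violates the elementary-graph one, so $\bar\CV$ still witnesses a failure --- and the cardinality requirements are met except in the single case that the failed condition is \eqref{e:aEdges} with $|\bar H_{in}|=1$.

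That exceptional configuration is where I expect the real work to be, and where the doubling recipe must be replaced, the set $\bar H_{in}^{(1)}\sqcup\bar H_{in}^{(2)}$ having only two vertices. A one-line computation with \eqref{e:aEdges} shows that then $\bar H_{in}=\{w\}$ with at least two external vertices of $H$ attached to $w$, so $\CV$ carries a multi-edge between $w^{(1)}$ and $w^{(2)}$ whose $a$-weights sum to at least $2|\s|$. When $|\CV_0|\ge3$ I would take $\bar\CV=\{w^{(1)},w^{(2)},z\}$ with $z\in\CV_0$ arbitrary, so that $\sum_{e\in\CE'_0(\bar\CV)}a_e\ge2|\s|=|\s|(|\bar\CV|-1)$ and \eqref{e:assEdges} fails; when $|\CV_0|=2$ one necessarily has $H_{in}=\{v_\star\}=\{w\}$ and $\CV=\CV_\star=\{0,v_\star^{(1)},v_\star^{(2)}\}$, and taking $\bar\CV=\CV$, all edges are internal to $\bar\CV$ and the same estimate makes \eqref{e:assEdges1} fail. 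The genuinely delicate points in a full write-up will be the two normalisation moves --- which, as in the proof of Theorem~\ref{theo:ele-graph}, turn on the interaction of external edges with $\CE^\uparrow,\CE^\downarrow$ --- and checking that no further low-cardinality witness slips through (for instance one with $\bar H_{in}=\emptyset$ and $0\in\bar H$); such cases are handled, exactly as in the forward direction, by adjoining to $\bar\CV$ the heads of the positively-renormalised edges.
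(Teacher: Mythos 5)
Your proposal is, at its core, the same argument as the paper's: both rest on the correspondence $\bar H \mapsto \bar\CV = \bar H_{in}^{(1)}\sqcup\bar H_{in}^{(2)}$ (with $0$ adjoined when $0\in\bar H$), the counting identity $\sum_{e\in\CE_0(\bar\CV)}a_e = 2\sum_{e\in\CE_0(\bar H)}a_e - |\s|\,|\bar H_{ex}|$ together with the matching relation between the right-hand sides of \eqref{e:aEdges}--\eqref{e:aEdges2} and \eqref{e:assEdges}--\eqref{e:assEdges2}, and the single copy $\bar H^{(1)}$ when $\bar H_{ex}=\emptyset$; you simply run it contrapositively. Your explicit handling of the low-cardinality case $|\bar H_{in}|=1$ (where the doubled set has only two vertices and \eqref{e:assEdges} cannot be invoked) addresses a point the paper hides behind the phrase ``appropriate $\bar H$'', and your resolution there --- the multi-edge of total weight at least $2|\s|$ between $w^{(1)}$ and $w^{(2)}$, caught by \eqref{e:assEdges} with a third vertex or by \eqref{e:assEdges1} with $\bar\CV=\CV$ --- is correct.

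Two steps, however, are not right as written. First, the accounting for your second normalisation move is wrong: adjoining an external vertex $v$ with $i(v)\in\bar H_{in}$ puts its pendant edge, of weight $|\s|$, into $\CE_0(\bar H)$, so the left-hand sides of \eqref{e:aEdges} and \eqref{e:aEdges1} go \emph{up} by $|\s|$ while the right-hand sides go up by $\tfrac12|\s|$; with the accounting you state (left side fixed, right side up by $\tfrac12|\s|$) this move would \emph{weaken} a violation of these two upper-bound conditions, so your claim that the normalisation ``never weakens the violation'' is unsupported by the arithmetic you give --- it is true, but only after the bookkeeping is corrected (your accounting is the right one only for \eqref{e:aEdges2}, and for the first move). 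Second, the assertion that $|\bar H_{in}|=1$ forces at least two attached external vertices is not a one-line consequence of the normalised inequality: with $m\in\{0,1\}$ attached external vertices the normalised \eqref{e:aEdges} holds with equality, hence is still non-strictly violated, so you must either track the violation margin through the moves (each move strictly increases it here, since the original witness has $\bar H_{ex}\neq\emptyset$) or return to the original cardinality-$\ge 3$ witness to exclude $m\le 1$; this again requires the corrected bookkeeping. Finally, the case $0\in\bar H$, $\bar H_{in}=\emptyset$ for \eqref{e:aEdges1}, driven purely by positively-renormalised edges at the root, is only waved at (``adjoin the heads of the positively-renormalised edges''), and it is not clear that this recipe closes it; to be fair, the paper's own proof is silent on this case as well, so on that point you are no worse off than the original, but in a full write-up it would need an actual argument.
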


\begin{proof}
Clearly $H$ satisfies the first item of Assumption~\ref{ass:ele-graph} as a consequence of $(\CV,\CE')$ satisfying the first item of Assumption~\ref{ass:big-graph}.

For the other items we first observe that $\CV = H^{(1)}_{in} \sqcup H^{(2)}_{in} \sqcup \{0\}$ and 
\begin{equ} \label{e:GGobserve}
\CE' \setminus \left[ \CE_{0}(H^{(1)}_{in}) \sqcup \CE_{0}(H^{(2)}_{in}) \right]
=
\cup_{v \in H_{ex}} \{ i^{(1)}(v), i^{(2)}(v) \}
\end{equ}
where $i^{(j)}(v)$ is the instantiation of $i(v)$ in $H^{(j)}_{ex}$.

For item $2$ fix some $\bar{H} \subseteq H_{0}$.  
If $\bar H \cap H_{ex} = \emptyset$ then \eqref{e:aEdges}
follows by applying \eqref{e:assEdges} to $\bar\CV= \bar H^{(1)}$ -
the right hand side of \eqref{e:aEdges} is equal to
$|\s|\,(|\bar H|-1)$.

We claim that given appropriate $\bar{H}\subset H$, the needed criteria for the other case of item $2$ (where $\bar{H} \cap H_{ex} \not = \emptyset$) or items 3 or 4 are equivalent to the corresponding items of Assumption \ref{ass:big-graph} for the set $\bar{\CV}\subset \CV$, where  $\bar{\CV} \eqdef \bar{H}_{in}^{(1)} \sqcup \bar{H}_{in}^{(2)}$ if $\bar{H} \not \ni 0$ and $\bar{\CV} \eqdef \bar{H}_{in}^{(1)} \sqcup \bar{H}_{in}^{(2)} \sqcup \{0\}$ if $\bar{H} \ni 0$.

Since the arguments are so similar we only do the other case of item 2 here. Suppose $\bar{H} \subset H_{0}$ with $\bar H \cap H_{ex} \neq \emptyset$. 
Defining $\bar\CV$ as described and then applying \eqref{e:assEdges} 
together with \eqref{e:GGobserve}
 to $\bar \CV$ gives
\begin{equ}
2
\sum_{e\in\CE_0(\bar{H})} a_e 
- |\s| \times   
|\bar{H}_{ex}|
= 
\sum_{e \in \CE_0(\bar\CV)}
a_{e}
< |\s| ( |\bar \CV|-1) = |\s| ( 2 |\bar{H}_{in}| - 1 ),
\end{equ}
which is exactly the desired condition for $\bar H$.
\end{proof}

\begin{remark} The above lemma establishes a type of ``hypercontractive'' 
or ``equivalence of moments'' bound in the non-Gaussian setting.
\end{remark}

We state two lemmas and a remark before proceeding to the stochastic estimates.

\begin{lemma} \label{lem:merge}
Suppose that the elementary graph $H=(\CV,\CE)$ satisfies Assumption~\ref{ass:ele-graph},
and that $H'=(\CV,\CE')$ is another elementary graph 
which has the same vertex set $\CV$.
Assume that there exist internal edges $e_1,e_2\in \CE$ with $a_{e_j}=|e_j||\s|/2$ and $r_{e_j}\le 0$
for $j=1,2$, and that $\CE'=(\CE \setminus \{e_1,e_2\}) \cup e$
with $e=e_1\cup e_2$ and $a_e = |e||\s|/2$.
In plain words $H'$
is formed by merging $e_1,e_2$ into one hyper-edge $e$. 
Then, $H'$ also satisfies Assumption~\ref{ass:ele-graph}.
\end{lemma}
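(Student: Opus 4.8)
The plan is to verify the four items of Assumption~\ref{ass:ele-graph} for $H'$ directly, exploiting that passing from $H$ to $H'$ changes the edge set only by deleting the two internal edges $e_1,e_2$ and inserting the single hyper-edge $e=e_1\cup e_2$. We may assume $e_1$ and $e_2$ are vertex-disjoint, so that $|e|=|e_1|+|e_2|\ge 4$, whence $e\in\CE'_h$ with $a_e=|e||\s|/2=a_{e_1}+a_{e_2}$ and $r_e=0$. The merge moves no vertex between $H_{ex}$, $H_{in}$ and $\{0\}$ and leaves $H_\star$ and $e_\star$ untouched, so the right-hand sides of \eqref{e:aEdges}, \eqref{e:aEdges1}, \eqref{e:aEdges2} are literally unchanged. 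Item~1 of Assumption~\ref{ass:ele-graph} for $H'$ is then immediate, since $\CE_2(H')\subseteq\CE_2(H)$: we have only removed $e_1,e_2$ (if they happened to be normal edges) and inserted the hyper-edge $e$.

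The observation that makes the remaining items routine is that each of $e_1,e_2,e$ has $r\le 0$, so none of them ever lies in any set $\CE^\uparrow(\bar H)$ or $\CE^\downarrow(\bar H)$, whose definitions require $r>0$; as every other edge is unchanged, $\CE^\uparrow(\bar H)$, $\CE^\downarrow(\bar H)$ and hence all the $r_e$-dependent sums occurring in \eqref{e:aEdges1} and \eqref{e:aEdges2} are identical for $H$ and $H'$, for every $\bar H$. For items~2 and~3 (upper bounds on $\sum_{e'\in\CE_0(\bar H)}a_{e'}$) I would then compare the new $\CE_0$-sum with the old one: if $\bar H\supseteq e$ the two sums agree, since $a_e=a_{e_1}+a_{e_2}$; otherwise $e\not\subseteq\bar H$ and at most one of $e_1,e_2$ is contained in $\bar H$, so in going from $H$ to $H'$ one only deletes a nonnegative term. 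Either way the $H'$ sum is $\le$ the $H$ sum, which by Assumption~\ref{ass:ele-graph} for $H$ is strictly below the common right-hand side; together with the first observation this gives \eqref{e:aEdges} and \eqref{e:aEdges1} for $H'$.

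The one comparison that needs care is item~4, the lower bound \eqref{e:aEdges2}, where the inequality runs the other way. The relevant sum is over $\CE(H';\bar H)\setminus\CE^\downarrow(\bar H)$; since $e_1,e_2,e\notin\CE^\downarrow$, only their membership in $\CE(\cdot\,;\bar H)$ (``meets $\bar H$'') matters, and $e$ meets $\bar H$ precisely when $e_1$ or $e_2$ does. If both $e_1,e_2$ meet $\bar H$ their total contribution $a_{e_1}+a_{e_2}$ equals $a_e$, unchanged; if exactly one does, say $e_1$, then $H$ contributes $a_{e_1}$ while $H'$ contributes $a_e=a_{e_1}+a_{e_2}\ge a_{e_1}$; if neither does, both contribute $0$. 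Hence the $H'$ sum is $\ge$ the $H$ sum, which by Assumption~\ref{ass:ele-graph} for $H$ exceeds the common right-hand side; combined with the unchanged $r_e$-sums this yields \eqref{e:aEdges2} for $H'$, and assembling the four items completes the proof. The only nontrivial point is thus that replacing $e_1,e_2$ by the hyper-edge $e$ can only increase the $a$-mass ``seen'' by any $\bar H$ — which holds precisely because the hyper-edge weight $|e||\s|/2$ is additive over its vertex set and $\bar H$ already sees $e$ as soon as it touches a single vertex of it.
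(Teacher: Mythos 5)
Your proof is correct and takes essentially the same route as the paper's: the right-hand sides of \eqref{e:aEdges}, \eqref{e:aEdges1}, \eqref{e:aEdges2} are unchanged under the merge, the sums on the left can only decrease for items 1--3 and only increase for item 4, which is precisely the monotonicity the paper states (your write-up just supplies the case analysis, including the useful remark that $e_1,e_2,e$ never enter $\CE^\uparrow$ or $\CE^\downarrow$ since their $r$-labels are $\le 0$). Your explicit vertex-disjointness assumption, which gives $a_e=a_{e_1}+a_{e_2}$ and is needed for the item-4 comparison, is equally implicit in the paper's argument and holds in all the applications, so it is not a gap.
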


\begin{proof}
For items 1-3 of Assumption~\ref{ass:ele-graph} and any allowable subset $\bar H\subset \CV$, the LHS of the bounds for $\bar H$ as a subgraph of $H'$ is always smaller or equal to 
the LHS  for $\bar H$ as a subgraph of $H$, while the RHS remains the same.

For item 4 and any allowable subset $\bar H$, the LHS of the bound for $\bar H$ as a subgraph of $H'$ is always larger or equal to 
the LHS  for $\bar H$ as a subgraph of $H$, while the RHS again remains the same.

Therefore if $H$ satisfies Assumption~\ref{ass:ele-graph}, then after merging $e_1,e_2$ into one hyper-edge the new graph also satisfies Assumption~\ref{ass:ele-graph}.
\end{proof}

\begin{lemma} \label{lem:no-ex}
For graphs $H$ such  that $H_{ex}=\emptyset$,
Assumption~\ref{ass:ele-graph}  for $H$ is equivalent with Assumption~\ref{ass:big-graph} for $\CV=H$.
\end{lemma}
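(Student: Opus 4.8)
The plan is to verify that, when $H_{ex}=\emptyset$, the four conditions of Assumption~\ref{ass:ele-graph} are \emph{literally} the four conditions of Assumption~\ref{ass:big-graph} applied to the graph $(\CV,\CE)=(H,\CE)$ with $\CV_\star\eqdef H_\star$. So the proof is pure bookkeeping: match the index sets over which each condition is quantified, match the left-hand sides, and match the right-hand sides.

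First I would record the elementary observations forced by $H_{ex}=\emptyset$: one has $H_0=H_{in}$, hence $\CV_0=\CV\setminus\{0\}=H_0$ while $\CV_\star=H_\star$, so the families of subsets appearing in items 2, 3, 4 of the two assumptions coincide (respectively: subsets of $H_0$ of cardinality at least $3$; subsets of $H$ containing $0$ of cardinality at least $2$; non-empty subsets of $H\setminus H_\star$). Moreover every $\bar H\subseteq H$ satisfies $\bar H_{ex}=\emptyset$ and --- using the convention that the base point $0$ is neither internal nor external --- $|\bar H_{in}|=|\bar H|-\one_{0\in\bar H}$. Finally, the left-hand sides of all four inequalities of Assumption~\ref{ass:ele-graph} are, term for term, the left-hand sides of the corresponding inequalities of Assumption~\ref{ass:big-graph} with $\bar H$ in the role of $\bar\CV$, so only the right-hand sides remain to be checked.

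The right-hand sides then match directly. Item 1 carries no right-hand side. For item 2, inserting $|\bar H_{ex}|=0$, $\one_{\bar H_{ex}=\emptyset}=1$ and $0\notin\bar H$ into \eqref{e:aEdges} collapses its right-hand side to $|\s|(|\bar H|-1)$, which is the right-hand side of \eqref{e:assEdges}. For item 3, since $0\in\bar H$ the right-hand side of \eqref{e:aEdges1} equals $|\s|\bigl((|\bar H|-1)+0\bigr)=|\s|(|\bar H|-1)$, matching \eqref{e:assEdges1}. For item 4, since $0\notin\bar H$ the right-hand side of \eqref{e:aEdges2} equals $|\s|\bigl(|\bar H|+0\bigr)=|\s||\bar H|$, matching \eqref{e:assEdges2}. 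I do not expect any real obstacle here; the only place demanding a moment's care is the indicator term $\one_{\bar H_{ex}=\emptyset}$ in item 2 together with the convention excluding $0$ from $|\bar H_{in}|$, which is precisely what produces the shift by $1$ relating $|\bar H_{in}|$ to $|\bar H|$ in each case.
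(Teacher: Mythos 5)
Your proposal is correct and is exactly the comparison the paper has in mind: its proof of Lemma~\ref{lem:no-ex} is the one-line remark that the equivalence is ``immediate upon comparing the two assumptions with the condition $H_{ex}=\emptyset$'', and your bookkeeping (identical quantifier sets and left-hand sides, right-hand sides collapsing to $|\s|(|\bar H|-1)$ resp.\ $|\s||\bar H|$ via $|\bar H_{ex}|=0$, $\one_{\bar H_{ex}=\emptyset}=1$, and the convention that $0$ is neither internal nor external) is precisely that comparison spelled out.
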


\begin{proof}
Immediate upon comparing the two assumptions with the condition $H_{ex}=\emptyset$.
\end{proof}

\begin{remark} 
When we do our stochastic estimates there are symbols $\tau$ and self-contractions $\pi$ on $\tau$ such that the elementary graph $H_{\tau,\pi}$ will have a bad chain, failing to abide Assumption \ref{ass:ele-graph}. 
However, it is easy to see that the finite set of bad chains of $H_{\tau,\pi}$ can be eliminated by integrating a noise vertex in each one yielding an abiding $\tilde{H}_{\tau,\pi}$. 
Below this is done implicitly whenever the scenario arises.
\end{remark}

\section{Application: Wong-Zakai theorem for non-Gaussian noise}
\label{sec:AppWZ}

We now apply the machinery 
of the preceding sections
to prove Theorem~\ref{theo:main}.

\subsection{Renormalization}

We fix $\CT_{0} \subset \CT$ to be the span of $\CW_{0}$ where
\[
\CW_0 \eqdef \Big\{\<Xi>, \<Xi2>, \<Xi3>, \<Xi3b>, \<XiX>, \<Xi4>, \<Xi4c>, \<Xi4e>,\<Xi4b>,\<Xi2X>,\<XXi2>,\1,\<IXi^2>,\<IXi2>,\<Xi22>, \<IXi>\Big\} \;.
\]
Our maps $M: \CT_{0} \mapsto \CT_{0}$ will be of the form 
\begin{equation}\label{form of M}
M = \exp(- \sum_{i=1}^7 \ell_i L^{(i)} )
\end{equation} 
where $\{\ell_i\}_{i=1}^{7} \subset \R$ and the $\{L^{(i)}\}_{i=1}^{7}$ 
are nilpotent linear operators on $\CT_0$ given as follows. $L^{(1)}$ is defined in the same way as the map called in $L$ in \cite{WongZakai}: it iterates over all occurrences of $\<Xi2>$ as a ``subsymbol'' of $\tau$ 
and ``erases'' it in the graphical notation:
\begin{equs}[3][e:defL]
L^{(1)}\<Xi2>&=\1\;, \quad&\quad
L^{(1)}\<Xi3>&=\<IXi>\;, \quad&\quad
L^{(1)}\<Xi3b>&=2\,\<IXi>\;, \\
L^{(1)}\<Xi4b>&=3\,\<IXi^2>\;, \quad&\quad
L^{(1)} \<Xi4e> &= \<IXi2> + \<IXi^2>\;, \quad&\quad
L^{(1)}\,\<Xi4>&=\<IXi2> + \<Xi22>\;,\\
L^{(1)} \<Xi4c> &= \<IXi^2> + 2\, \<Xi22>\;, \quad&\quad
L^{(1)}\<Xi2X>&= \X_1\;,  \quad&\quad L^{(1)}\<XXi2>&= \X_1\;.
\end{equs}
Note that  there is no term $\<XiI>$ appearing in $L^{(1)}\<Xi3>$ because $\CI(\1) = 0$, and
similarly for the other terms.
We furthermore have  $L^{(1)}\tau =0$ for every $\tau\in\CW_0$ which is not one of the above nine elements.
Regarding the other maps, one has
\[
L^{(2)} \<Xi3> = \1\;, \quad  L^{(2)} \<Xi4> = \<IXi>\;, \quad
	L^{(2)}\<Xi4e>=\<IXi>
\]
and $L^{(2)} \tau = 0 $ for 
	every $\tau\in \CW_0 \setminus \{\<Xi3>,\<Xi4>,\<Xi4e>\}$, and
\[
L^{(3)} \<Xi3b> = \1\;, \quad L^{(3)} \<Xi4b> = 3\<IXi>\;, \quad
	L^{(3)}\<Xi4e>=\<IXi>
\]
and $L^{(3)} \tau = 0$ for every $\tau\in \CW_0 \setminus \{\<Xi3b>, \<Xi4b>,\<Xi4e>\} $, and
\[
L^{(4)} \<Xi4> = \1 \;, \quad
L^{(5)} \<Xi4e> = \1\;, \quad
L^{(6)} \<Xi4b> = \1\;, \quad
L^{(7)} \<Xi4c> = \1\;.
\]
and $L^{(i)}\tau =0$  for every $\tau\in \CW_0 \setminus \{\tau\}$ 
with $(i,\tau)\in \{(4,\<Xi4> ),(5,\<Xi4e> ),(6,\<Xi4b> ),(7,\<Xi4c>) \}$.

Recall that $\mathfrak R$ is the renormalization group introduced in Theorem~\ref{thm: renormalization operator}.
\begin{theorem}
For any choice of the constants $\ell_{j}$, and $M \in \mathfrak{R}$.
\end{theorem}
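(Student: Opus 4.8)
The plan is to read the claim as asserting that, for every $\ell_{1},\dots,\ell_{7}\in\R$, the operator $M$ of \eqref{form of M} belongs to $\mathfrak R$, and to obtain this directly from Theorem~\ref{thm: renormalization operator}. Thus it suffices to verify four things: (i) the subspace $\CT_{0}=\mathrm{span}(\CW_{0})$ is admissible in the sense of that theorem, i.e. $\oplus_{\alpha\le 0}\CT_{\alpha}\subset\CT_{0}$ and $\Delta\CT_{0}\subset\CT_{0}\otimes\mathrm{Alg}(\CT_{0})$; (ii) \eqref{form of M} does define a linear map $M\colon\CT_{0}\to\CT_{0}$; (iii) $M$ commutes with $\CI$ and with multiplication by the $X^{k}$; (iv) the induced map $\Delta^{M}$ is upper triangular. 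Granting (i)--(iv), Theorem~\ref{thm: renormalization operator} provides the maps $\hat M$ and $\Delta^{M}$ with their intertwining identities, shows that $(\Pi^{M},f^{M})$ is admissible whenever $(\Pi,f)$ is, and identifies the collection of all such $M$ with the group $\mathfrak R$; in particular $M\in\mathfrak R$.

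Points (i)--(iii) are essentially bookkeeping. For (i) one runs through the finitely many symbols listed in $\CW_{0}$ and applies the recursive definition of $\Delta$ from Section~\ref{sec:RS}: at each step the left tensor factor remains a symbol of $\CW_{0}$ and the right factor is a product of terms $\CJ_{k}(\tau)$ with $\tau\in\CW_{0}$, precisely because no factor $\Xi^{2}$ is ever produced and $\CI(X^{k})=0$. For (ii), inspection of \eqref{e:defL} and of the definitions of $L^{(2)},\dots,L^{(7)}$ shows that each $L^{(i)}$ sends $\CW_{0}$ into $\mathrm{span}(\CW_{0})$ and strictly raises homogeneity (the increments are bounded below by a fixed positive amount for $\kappa$ small); since the homogeneities occurring in the finite set $\CW_{0}$ are bounded above, any sufficiently long composition of the $L^{(i)}$ annihilates $\CT_{0}$, so $\sum_{i}\ell_{i}L^{(i)}$ is nilpotent and $M$ is a finite sum of linear maps $\CT_{0}\to\CT_{0}$. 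For (iii) one notes that grafting an $\CI$ onto a symbol, or multiplying it by a monomial $X^{k}$, does not alter the set of occurrences of the subsymbols that the $L^{(i)}$ locate and erase; hence it is enough to check the commutation for each $L^{(i)}$ separately, which for $L^{(1)}=L$ is the statement already proved in \cite{WongZakai} and for $L^{(2)},\dots,L^{(7)}$ --- all of the same ``erase a fixed subsymbol'' type --- is verified in exactly the same way.

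The genuine work, and the step I expect to be the main obstacle, is (iv): showing that $\Delta^{M}\tau\in\bigl(\oplus_{\beta\ge|\tau|}\CT_{\beta}\bigr)\otimes\CT_{+}$ for every $\tau$. The natural route is an induction following the recursive construction of $\Delta^{M}$ in Theorem~\ref{thm: renormalization operator}: the base cases $\1,\Xi,X_{i}$ are immediate, multiplicativity of $\Delta^{M}$ preserves the property because homogeneities add, and the step across $\CI(\,\cdot\,)$ preserves it because $\CI$ raises homogeneity by $2$ while the accompanying polynomial correction only modifies the $\CT_{+}$-leg. The only input that really must be fed into this induction is that $M$ itself does not lower homogeneity on $\CT_{0}$, which is exactly what was observed in (ii); thus combining the $L^{(i)}$ with arbitrary real coefficients causes no new difficulty. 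In practice I would make the argument completely explicit by tabulating $\Delta^{M}\tau$ for each of the finitely many $\tau\in\CW_{0}$ --- e.g. $\Delta^{M}\<Xi2>=M\<Xi2>\otimes 1$ as already recorded in Section~\ref{sec:Adm-model}, and similarly for $\<Xi3>,\<Xi3b>,\<Xi4>,\<Xi4c>,\<Xi4e>,\<Xi4b>,\<Xi2X>,\<XXi2>$ --- and reading off that in each case the first tensor slot involves only symbols of homogeneity at least $|\tau|$. With (iv) established, Theorem~\ref{thm: renormalization operator} applies verbatim and yields $M\in\mathfrak R$.
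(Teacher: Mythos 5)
Your overall strategy is the paper's: reduce everything to Theorem~\ref{thm: renormalization operator}, with items (i)--(iii) being bookkeeping and the crux being the upper-triangularity of $\Delta^{M}$. The gap is in your argument for that crux. You propose an induction in which ``multiplicativity of $\Delta^{M}$ preserves the property'' and conclude that the only real input is that $M$ does not lower homogeneity on $\CT_{0}$. But $\Delta^{M}$ is \emph{not} multiplicative (only $\hat M$ is, on $\CT_{+}$); indeed the entire point of renormalization is that $M$ deforms the product, and $\Delta^{M}$ is pinned down only through the identities $\hat M \CJ_{k} = \CM(\CJ_{k}\otimes 1)\Delta^{M}$ and $(1\otimes\CM)(\Delta\otimes 1)\Delta^{M} = (M\otimes\hat M)\Delta$. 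There is no general principle that upper-triangularity of $M$ on $\CT_{0}$ forces upper-triangularity of $\Delta^{M}$: the dangerous terms are contributions $\sigma\otimes\bar\sigma$ with $|\sigma|<|\tau|$ generated by the interplay of $M$ with the coproduct through the $\CJ_{k}$'s, and ruling them out is exactly the content of the symbol-by-symbol computation in \cite{WongZakai}, Section~4.2 --- if your heuristic were valid, that computation would have been unnecessary there. Your fallback of tabulating $\Delta^{M}\tau$ for every $\tau\in\CW_{0}$ would of course close the gap, but it is announced rather than carried out, so as written the key step is asserted, not proved.

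You also miss the structural shortcut that makes the paper's ``checked by computation'' short. One first notes $L^{(i)}L^{(j)}\tau=0$ for all $\tau\in\CW_{0}$, so the $L^{(i)}$ commute and $M=I-\sum_{i=1}^{7}\ell_{i}L^{(i)}$; then, since $\mathfrak R$ is a group, it suffices to treat the seven elementary factors $M_{j}=e^{-\ell_{j}L^{(j)}}$ separately. For $j=1$ the upper-triangularity of $\Delta^{M_{1}}$ is literally the computation of \cite{WongZakai}, Section~4.2 for the operator called $L$ there, while for $j\ge 2$ one checks directly that $\hat M_{j}\CJ_{k}(\tau)=\CJ_{k}(M_{j}\tau)$ and $\Delta^{M_{j}}=M_{j}\otimes\mathrm{Id}$ (compare $\Delta^{M}\<Xi2> = M\<Xi2>\otimes\1$), which is trivially upper triangular. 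Verifying the full $M$ in one pass, as you propose, is feasible but strictly heavier, and in any case requires the explicit computations you deferred.
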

\begin{proof}
One can check that $L^{(i)}L^{(j)} \tau = 0$ for
for all $\tau \in \CW_0$ and any $i,j$ - thus the operators $L^{(i)}$ all commute and one actually has $M = I - \sum_{i=1}^7 \ell_i L^{(i)}$. Furthermore since $\mathfrak{R}$ is a group it suffices to check, for $1 \le j \le 7$, the upper-triangularity of $\Delta^{M_{j}}$ where $M_{j} \eqdef e^{-\ell_{j}L_{j}}$. This can be checked by computation.

The $j=1$ case requires the most work but the computations for this case are exactly the same as those found in \cite{WongZakai}[Sec 4.2] for the operator called $L$.

For $j \ge 2$ we observe that one has, for all $\tau \in \CT_{0}$, $\hat{M}_{j} \CJ_{k}(\tau) = \CJ_{k}(M_{j} \tau)$ and $\Delta^{M_{j}} = M_{j} \otimes \mathrm{Id}$. Clearly $\Delta^{M_{j}}$ is upper triangular. 
\end{proof}



We then define, for each $\eps \in (0,1]$, a map $M_\eps \in \mathfrak{R}$ by specifying the constants
\begin{equs}[e:choiceell]
\ell_1 &= \eps^{-1} C^{\<Xi2>} \;,\quad
\ell_2 =\eps^{-\frac12} C^{\<Xi3>}     \;,\quad
\ell_3 = \eps^{-\frac12} C^{\<Xi3b>}  \;,\\
\ell_4 & = c^{(2)}_{\zeta}  \;,\quad
\ell_5 = c^{(4)}_{\zeta}\;, \quad
\ell_6 = c^{(1)}_{\zeta}\;,\quad
\ell_7 = c^{(3)}_{\zeta} \;.
\end{equs}
Recall that the constants on the right hand side are defined in \eqref{e:def-consts}.

\subsection{Moment bounds}
In order to identify the limit of our sequence of models $(\hat \Pi^{(\eps)}, \hat \Gamma^{(\eps)})$ as the It\^o model 
we follow the approach in \cite{CLTKPZ} and introduce another level of regularization $\zeta_{\eps,\bar\eps} = \rho_{\bar\eps}*\zeta_\eps$ with $\bar\eps>0$ and $\rho_{\bar\eps} = \bar\eps^{-3} \rho (\bar\eps^{-2} t, \bar\eps^{-1} x)$, where $\rho$
 is smooth, compactly supported, even in the space variable and integrating to one.
 
 We then construct the  renormalized model $ (\hat \Pi^{(\eps,\bar\eps)}, \hat \Gamma^{(\eps,\bar\eps)})$  from $\zeta_{\eps,\bar\eps}$
together with the renormalization maps $M_{\eps,\bar\eps}$
with constants specified in the same way as in \eqref{e:choiceell},
except that we replace every $P$ by our truncation $K$, every $\fC_n$ by $\fC^{(\eps)}_n$, and finally drop the factors $\eps^{-1}$ and 
$\eps^{-\frac12}$ in \eqref{e:choiceell}.
For example, the constant 
$\eps^{-1} C^{\<Xi2>} $ in  \eqref{e:choiceell}  is replaced by 
\begin{equ}
\begin{tikzpicture}[scale=0.35,baseline=0.5cm]
	\node at (-2,1)  [root] (left) {};
	\node at (-2,3)  [int] (left1) {};
	
	\draw[kernel] (left1) to (left); \node at (-3,2) {$K$};
	
	\node[cumu2n]	(a)  at (0,2) {};	
	\draw[cumu2] (a) ellipse (10pt and 20pt);
		\node at (1.5, 2) {$\fC_2^{(\eps)}$}; 
	
	\draw ($(a)+(-90:4mm)$) node[int]   {} to  (left);
		\node at (-1,3.5) {$\rho_{\bar\eps}$};  
	\draw ($(a)+(90:4mm)$) node[int]   {} to (left1);
		\node at (-1,0.5) {$\rho_{\bar\eps}$};  
\end{tikzpicture}
\end{equ}
 
One may find that for a fixed $\eps>0$, sending  $\bar\eps$  to zero does not exactly recover the renormalization constants for $(\hat \Pi^{(\eps)}, \hat \Gamma^{(\eps)})$; for instance in the latter model the renormalization constants are defined via $P$.
This does not matter for two reasons: (i) we will only consider the situation that $\eps$ is much less than $\bar\eps$;
(ii) when we bound the moments for 
$(\hat \Pi^{(\eps)}_x \tau) (\varphi_x^\lambda)$ below,
we will actually replace the constants for $\hat\Pi^{(\eps)}$
by the ones defined via $K$ and $\fC_{n}^{(\eps)}$ here, with an error
that goes to zero as $\eps\to 0$. More precisely,
for the constants  $\eps^{-1}C^{\<Xi2>},\eps^{-\frac12} C^{\<Xi3>},\eps^{-\frac12}C^{\<Xi3b>} $, by exponential decay of $\fC_n$ and the fact that $P(z)=K(z)$ for $|z|<1$, one can easily see by a scaling argument that this error is bounded by $\theta^{\frac{1}{\eps}}$ with $\theta\in (0,1)$;
thus even though these errors are multiplied by ``graphs" which may diverge as $\eps\to 0$ they still vanish in that limit.
For the other constants one can also argue as in \cite{WongZakai}
that the error of such replacement vanishes as $\eps\to 0$.
 

\begin{proposition} 
\label{prop:mombound}
Let $(\hat \Pi^{(\eps)}, \hat \Gamma^{(\eps)})$ and $ (\hat \Pi^{(\eps,\bar\eps)}, \hat \Gamma^{(\eps,\bar\eps)})$ be defined as above.
There exist $\kappa,\eta>0$   such that 
for every $\tau\in\{\<Xi>,\<Xi2>,\<Xi2X>,\<XXi2>,\<Xi3>,\<Xi3b>,\<Xi4>,\<Xi4b>,\<Xi4c>,\<Xi4e>\}$
 and every   $p>0$,
one has
\begin{equ}  \label{e:mombound}
\E | (\hat\Pi^{(\eps)}_x \tau  -  \hat\Pi^{(\eps,\bar\eps)}_x \tau ) (\varphi_x^\lambda) |^p  
	\lesssim
	\bar\eps^\kappa
	 \lambda^{p (|\tau|_\s + \eta)} 
\end{equ}
uniformly in all $\bar\eps\in(0,1]$ and all $\eps\in(0,1]$ sufficiently small depending on $\bar\eps$, all $\lambda\in(0,1]$, all test functions $\varphi\in\CB$, and all $x\in\R^2$.
\end{proposition}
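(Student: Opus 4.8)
The plan is to write each moment in \eqref{e:mombound} as a finite sum of generalised convolutions of the shape \eqref{e:genconv} and to control every summand through Theorem~\ref{theo:big-bd}, accessed via Theorem~\ref{theo:ele-graph}. First I would use the renormalisation maps $M_\eps,M_{\eps,\bar\eps}\in\mathfrak R$ and the expansion \eqref{e:RHS} to write $(\hat\Pi^{(\eps)}_x\tau)(\varphi_x^\lambda)$ and $(\hat\Pi^{(\eps,\bar\eps)}_x\tau)(\varphi_x^\lambda)$ as multilinear functionals of $\zeta_\eps$, respectively of $\zeta_{\eps,\bar\eps}=\rho_{\bar\eps}*\zeta_\eps$, with one copy of the noise attached to each leaf of (the tree underlying) $\tau$ and of its finitely many renormalisation counterterms. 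Telescoping over these leaves represents the difference $(\hat\Pi^{(\eps)}_x\tau-\hat\Pi^{(\eps,\bar\eps)}_x\tau)(\varphi_x^\lambda)$ as a sum of multilinear functionals in which exactly one noise slot is occupied by $(1-\rho_{\bar\eps}*)\zeta_\eps$ and the rest by $\zeta_\eps$ or $\zeta_{\eps,\bar\eps}$; raising this to the $p$-th power and taking expectations, the cumulant expansion of moments (as in \cite[Section~5]{HairerPhi4} and \cite{CLTKPZ}) turns it into a finite sum indexed by partitions of the resulting noise slots into cumulant blocks, each summand being a generalised convolution $\CI^{G}$ with $G$ a reduced $p$-fold Wick contraction (Definition~\ref{def:make-big-graph2}) of an elementary graph $H_\tau$ built from $\tau$ and its counterterms. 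Because the telescoping forces a noise slot of ``difference type'' in each of the $p$ copies, at least one edge (or the cumulant of one hyper-edge) of $G$ then carries the operator $1-\rho_{\bar\eps}*$ in one of its arguments.

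\textbf{Feeding the graphs into the bounds.} Second, I would verify Assumption~\ref{ass:ele-graph} for the finitely many elementary graphs $H_\tau$, $\tau$ ranging over the ten symbols of the statement, with all of its inequalities strict: for the graphs inherited from the Gaussian setting this is essentially the check carried out in \cite{WongZakai}, the bad chains appearing for certain self-contractions are removed as in the Remark following Lemma~\ref{lem:no-ex}, and the graphs produced by the renormalisation counterterms are strictly more regular. Since the labels $a_e$ carried by the noise/cumulant edges do not see the small parameter governing $|\Xi|$ in \eqref{e:symbols} whereas $|\tau|_\s$ does, the exponent $\alpha=|\s||\CV\setminus\CV_\star|-\sum_e a_e$ produced by Theorem~\ref{theo:big-bd} satisfies $\alpha\ge p(|\tau|_\s+2\eta)$ for a suitable $\eta>0$ independent of $p$ --- the usual subcriticality surplus. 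The factor $1-\rho_{\bar\eps}*$ is then absorbed by moving it off the noise and onto the incident edge: using $\int\rho=0$, $\int x\,\rho(x)\,dx=0$, the smoothness and compact support of $\rho$, and the renormalised-kernel formalism \eqref{e:defRen}, one bounds the resulting kernel in the relevant norm (Definition~\ref{def:hyperhomo}/\ref{def:hyperhomo2}) by $C\bar\eps^{\kappa}$ at the cost of lowering its homogeneity label by $\kappa$; choosing $\kappa<\eta$, the inequalities of Assumption~\ref{ass:big-graph} for $G$ still hold (each loses at most $\kappa$ per affected copy of $H_\tau$, which the surplus absorbs), so Theorem~\ref{theo:ele-graph} and Theorem~\ref{theo:big-bd} apply and give, since $\lambda\le 1$, $|\CI^{G}|\lesssim\lambda^{p(|\tau|_\s+\eta)}\prod_{e\in\CE_2}\|K_e\|_{a_e;p}\prod_{e\in\CE_h}\|\kappa_e\|_{a_e}$, the cumulant factors being uniformly bounded by \eqref{e:kappa-exp} applied to the rescaled cumulants of $\zeta_\eps$, $\zeta_{\eps,\bar\eps}$ and $(1-\rho_{\bar\eps}*)\zeta_\eps$, the kernel factors being uniformly bounded (each kernel depends on $\eps,\bar\eps$ only through the stated estimates), and the distinguished one of them supplying the factor $\bar\eps^{\kappa}$. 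Summing over the finitely many summands gives \eqref{e:mombound}.

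\textbf{Remaining errors and the main obstacle.} Finally I would account for the discrepancy between $\hat\Pi^{(\eps)}$ --- built with the true heat kernel $P$ and the constants $\eps^{-1}C^{\<Xi2>}$, $\eps^{-1/2}C^{\<Xi3>}$, $\eps^{-1/2}C^{\<Xi3b>}$, etc. --- and the object actually estimated above, which uses the truncation $K$, the rescaled cumulants $\fC_n^{(\eps)}$, and the constants of \eqref{e:choiceell}: since $P=K$ on $\{|z|<1/2\}$ and the cumulants decay exponentially, a rescaling argument shows that these substitutions, as well as replacing $\zeta$ by its periodisation (governed by the coupling \eqref{e:coupling}), introduce errors bounded by $\theta^{1/\eps}$ for some $\theta\in(0,1)$, which tend to $0$ even after multiplication by the possibly divergent graphs, exactly as in \cite{WongZakai}; this is where the restriction ``$\eps$ sufficiently small depending on $\bar\eps$'' enters. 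I expect the main obstacle to be the second step: organising the cumulant expansion of a \emph{difference} of two \emph{renormalised} models so that every summand is genuinely a reduced Wick contraction of an elementary graph (the counterterms of $M_\eps$ and $M_{\eps,\bar\eps}$ must be tracked jointly, and the $1-\rho_{\bar\eps}*$ factor kept compatible with the negatively renormalised edges $A^-$ and the bad-chain reduction), together with the hand verification that the finite list of elementary graphs $H_\tau$ satisfies Assumption~\ref{ass:ele-graph} with enough room to accommodate both the self-contractions and the homogeneity trade made when extracting the $\bar\eps^{\kappa}$ gain.
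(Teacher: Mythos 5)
Your proposal is correct and follows essentially the same route as the paper: the weak bound \eqref{e:momWeak} comes from the graph-by-graph verification of Assumption~\ref{ass:ele-graph} fed into Theorems~\ref{theo:ele-graph} and \ref{theo:big-bd}, and the difference $\hat\Pi^{(\eps)}-\hat\Pi^{(\eps,\bar\eps)}$ is handled by exactly the telescoping you describe, with one instance of $\delta$ replaced by $\delta-\rho_{\bar\eps}$, whose norm bound $\lesssim\bar\eps^{\kappa}$ trades a small amount of homogeneity (absorbed by the surplus $\eta$) for the factor $\bar\eps^{\kappa}$, while the $P$ versus $K$ and constant replacements produce errors of size $\theta^{1/\eps}$ as you note. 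The only cosmetic difference is that the paper treats $\tau=\Xi$ separately by citing \cite{CLTKPZ}, which your uniform telescoping argument covers in substance.
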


\begin{remark}
We will actually mainly focus on a weaker bound 
\begin{equ} \label{e:momWeak}
\E | (\hat \Pi^{(\eps)}_x \tau) (\varphi_x^\lambda) |^p  
	\lesssim \lambda^{p (|\tau| + \eta)} \;,
\end{equ}
and \eqref{e:mombound} will turn out to follow in the same way.
\end{remark}


We introduce more graphical notation.
We denote by
\tikz[baseline=-0.1cm] \draw[testfcn] (1,0) to (0,0);
 a generic test function $\phi_\lambda$ rescaled to 
scale $\lambda$
and label it by $(0,0)$.
An arrow
\tikz[baseline=-0.1cm] \draw[kernel] (0,0) to (1,0);
labeled by $(1,0)$
represents the kernel $K$.
A barred arrow 
\tikz[baseline=-0.1cm] \draw[kernel1] (0,0) to (1,0);
labeled by $(1,1)$
represents a function $K(t-s,y-x) - K(-s,-x)$, where
$(s,x)$ and $(t,y)$ are the coordinates of the starting and end
point respectively. 
A double barred arrow 
\tikz[baseline=-0.1cm] \draw[kernel2] (0,0) to (1,0);
labeled by $(1,2)$
represents a function
$K(t-s,y-x) - K(-s,-x) - y\, K'(-s,-x)$. 
The dotted lines
\tikz[baseline=-0.1cm] \draw[rho] (0,0) to (1,0);
represent the Dirac distributions and are labeled by $(3,-1)$.
A polygon with $n$ points inside
 \begin{tikzpicture}[scale=0.4] 
 \node[cumu4] at (0,0) {}; 
 \node at (-0.35,0.35) [dot] {}; 
 \node at (-0.35,-0.35) [dot] {}; 
 \node at (0.35,0.35) [dot] {}; 
 \node at (0.35,-0.35) [dot] {}; 
 \end{tikzpicture}
 ($n=4$ here) represents the cumulant $\fC_{n}^{(\eps)}(z_{1}, \cdots, z_{n})$
 and is labeled by $(-3n/2,0)$.

We also draw an arrow  \tikz[baseline=-0.1cm] \draw[kprime] (0,0) -- (1,0);
with label $(2,0)$ for the kernel $K' = \d_x K$ 
and  \tikz[baseline=-0.1cm] \draw[testfcnx] (1,0) -- (0,0);
for the test function $(t,x) \mapsto x\phi^\lambda(t,x)$.
Note that $\tilde \phi(t,x) = x \phi(t,x)$ is again an 
admissible test function and one has $x\phi^\lambda(t,x) = \lambda \tilde \phi^\lambda(t,x)$.
As a consequence, when a test function $\phi$ is replaced with test function $\tilde \phi$,
one gains an additional power of $\lambda$. 


\subsubsection[Term Xi2]{The symbols \texorpdfstring{$\<Xi2>$}{Xi2}\,, \texorpdfstring{$\<Xi2X>$}{Xi2X}\,,
and \texorpdfstring{$\<XXi2>$}{XXi2}}

We start with the simplest object $\<Xi2>$ after the noise. By translation invariance we take the point $x$  in \eqref{e:momWeak} to be $0$. 
Using Definition~\ref{def:Wick-product} which allows us to represent a product of the noises as Wick products, one has
\begin{equ} [e:decPiXiTwo]
\bigl(\hat \Pi_0^{(\eps)} \<Xi2>\bigr)(\phi_0^\lambda) = \;
\begin{tikzpicture}[scale=0.35,baseline=0.3cm]
	\node at (0,-1)  [root] (root) {};
	\node at (-2,1)  [dot] (left) {};
	\node at (-2,3)  [dot] (left1) {};
	\node at (0,1) [var] (variable1) {};
	\node at (0,3) [var] (variable2) {};
	
	\draw[testfcn] (left) to  (root);
	
	\draw[kernel1] (left1) to (left);
	\draw[rho] (variable2) to (left1); 
	\draw[rho] (variable1) to (left); 
\end{tikzpicture}\;
+ \;
\begin{tikzpicture}[scale=0.35,baseline=0.3cm]
	\node at (0,-1)  [root] (root) {};
	\node at (-2,1)  [dot] (left) {};
	\node at (-2,3)  [dot] (left1) {};
	
	\node[cumu2n]	(a)  at (0,2) {};	
	\draw[cumu2] (a) ellipse (10pt and 20pt);
	
	\draw[testfcn] (left) to (root);
	
	\draw[kernel1] (left1) to (left);
	\draw[rho] ($(a)+(90:4mm)$) node[int]  {} to (left1); 
	\draw[rho] ($(a)+(-90:4mm)$) node[int]  {} to (left); 
\end{tikzpicture}\;
-\;\eps^{-1} C^{\<Xi2>} \;\;
\begin{tikzpicture}[scale=0.35,baseline=0.5cm]
	\node at (0,1)  [root] (root) {};
	\node at (0,3.2) [int] (left) {};
	\draw[testfcn] (left) to  (root);
\end{tikzpicture}
\;\;=\;
\begin{tikzpicture}[scale=0.35,baseline=0.3cm]
	\node at (0,-1)  [root] (root) {};
	\node at (-2,1)  [dot] (left) {};
	\node at (-2,3)  [dot] (left1) {};
	\node at (0,1) [var] (variable1) {};
	\node at (0,3) [var] (variable2) {};
	
	\draw[testfcn] (left) to  (root);
	
	\draw[kernel1] (left1) to (left);
	\draw[rho] (variable2) to (left1); 
	\draw[rho] (variable1) to (left); 
\end{tikzpicture}\;
 - \;
\begin{tikzpicture}[scale=0.35,baseline=0.3cm]
	\node at (0,-1)  [root] (root) {};
	\node at (-1,1)  [dot] (left) {};
	\node at (1,1) [dot] (right) {};
	
	\draw[testfcn] (left) to  (root);
	
	\node[cumu2n]	(a)  at (0,3) {};	
	\draw[cumu2] (a) ellipse (20pt and 10pt);
	
	\draw[kernel] (right) to (root);
	\draw[rho] ($(a)+(0:4mm)$) node[int]  {} to (right); 
	\draw[rho] ($(a)+(180:4mm)$) node[int]  {} to (left); 
\end{tikzpicture}
+E_\eps
\end{equ}
where $E_\eps$ here and below is an error which arises from the difference between 
$\eps^{-1} C^{\<Xi2>} $ and $ C_\eps^{\<Xi2>} $ and
goes to zero.

Now if we compare the above graphs with the corresponding ones in \cite[Eq.~(5.2)]{WongZakai} in the Gaussian noise case,
we realize that they are essentially the same graphs.
Since in \cite{WongZakai} it is checked that the symmetric pairing of the first graph, as well as the last graph above,
satisfy Assumption~\ref{ass:big-graph}, our Lemma~\ref{lem:GG}
and Lemma~\ref{lem:no-ex}  immediately yield Assumption~\ref{ass:ele-graph}
for the two graphs on the RHS of \eqref{e:decPiXiTwo},
and therefore by Theorem~\ref{theo:ele-graph} one concludes
 the desired bounds for $\bigl(\hat \Pi_0^{(\eps)} \<Xi2>\bigr)(\phi_0^\lambda) $.

The other two symbols $\<Xi2X>$
and $\<XXi2>$ are bounded in the same way.
In these cases, graphs appearing in the expansions are the same
with those in the Gaussian case. 
However, in general, for other symbols below,
there will be new terms due to the nontrivial higher cumulants of our non-Gaussian noise (we sometimes refer to them as just ``new terms").
The discussion here shows that we only need to treat with these new terms.

\subsubsection[Term Xi3]{The symbol \texorpdfstring{$\<Xi3>$}{Xi3}}

Besides the terms appearing in \cite{WongZakai},
we have the following new terms in the expression for 
$\bigl(\hat \Pi_0^{(\eps)} \<Xi3>\bigr)(\phi_\lambda)$:

\begin{equ}

\end{equ}

The first two graphs above have a fourth order hyper-edge, by splitting it into two edges and using Lemma~\ref{lem:merge} they become the graphs checked in \cite{WongZakai}. 

Therefore, for this symbol, there are still nine graphs  to be checked. It is straightforward (though a bit tedious) to check that they all satisfy Assumption~\ref{ass:ele-graph}.

\begin{proof}[of Proposition~\ref{prop:mombound}]
Collecting all the results of this section, we obtain the weaker bound  
 \eqref{e:momWeak}.
The bound \eqref{e:mombound},
follows in essentially the same way as 
the first bound, by the argument in \cite{CLTKPZ} (see also
the verification of the second bound 
in \cite[Theorem~10.7]{Regularity}).
 Indeed, as we consider the difference between 
$\hat\Pi_{0}^{(\eps)} \tau$
and $ \hat\Pi_{0}^{(\eps,\bar\eps)} \tau$
for any $\tau \neq \Xi$, we obtain the same graphs as above,
and  in each graph some of the instances of $\delta$
are replaced by $\rho_{\bar\eps}$
and exactly one instance is replaced by $\delta-\rho_{\bar\eps}$.
By the bound
$\| \delta-\rho_{\bar\eps}\|_{-3-\kappa} \lesssim \bar\eps^\kappa$
one obtains the same bound as  \eqref{e:momWeak}
with an extra factor $\bar\eps^\kappa$, which is exactly required by the bound \eqref{e:mombound}.
The case $\tau = \Xi$ can be also proved as in \cite{CLTKPZ}.
\end{proof}

\subsection{Proof of the main theorem}
%
%
It was shown in \cite{WongZakai}  that if we replace
$\zeta_\eps$ by $\xi_\eps$ with
$\xi_\eps=\rho_\eps * \xi$ where $\xi$
is the Gaussian space-time white noise,
the renormalised models built from $\xi_\eps$ converge to 
the  limit $\hat Z=(\hat \Pi, \hat\Gamma)$ called the It\^o model, 
which satisfies the following property.
For every $\tau \in \CU$ and every $(t,x)$, the process 
$s \mapsto \bigl(\hat \Pi_{(t,x)}\tau\bigr)(s,\cdot)$ is $\CF_s$-adapted for $s > t$
and, for every smooth test function $\phi$ supported in the future $\{(s,y)\,:\, s > t\}$,
 one has the identity
\begin{equ}[e:Itointegral]
\bigl(\hat \Pi_{(t,x)}\Xi\tau\bigr)(\phi) = \int_t^\infty \scal{\bigl(\hat \Pi_{(t,x)} \tau\bigr)(s,\cdot) \phi(s,\cdot), dW(s)}\;,
\end{equ}
where the integral on the right is the It\^o integral. 

The goal of this section is to show that our renormalised models 
$(\hat \Pi^{(\eps)}, \hat\Gamma^{(\eps)})$
built from $\zeta_\eps$ defined above converge to the same limit.
We prove this by applying a ``diagonal argument'' as in \cite{CLTKPZ}.

\begin{theorem} \label{theo:diagonal}
Let $\hat Z_\eps=(\hat \Pi^{(\eps)},\hat \Gamma^{(\eps)})$ be the renormalised model
built from $\zeta_\eps$ defined in the previous sections (with the choice of renormalisation
constants given by \eqref{e:def-consts}). 
Let $\hat Z=(\hat \Pi, \hat \Gamma)$ be the It\^o
random model.  Then, as $\eps\to 0$, one has
$\hat Z_\eps \to \hat Z$
in distribution in the space $\MM$ of admissible models for $\TT$. 
\end{theorem}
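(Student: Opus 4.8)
The plan is to run the two-parameter ``diagonal argument'' of \cite{CLTKPZ}, using the doubly regularised noise $\zeta_{\eps,\bar\eps}=\rho_{\bar\eps}*\zeta_\eps$ as a bridge between $\hat Z_\eps$ and the It\^o model $\hat Z$. First I would convert the moment estimates of Proposition~\ref{prop:mombound} into statements about the model metric: the weak bound \eqref{e:momWeak}, valid uniformly in $\eps$ small, together with the Kolmogorov-type criterion for random models used in \cite{CLTKPZ} (see also \cite[Theorem~10.7]{Regularity}), shows that each $\hat Z_\eps$ is almost surely an admissible model for $\TT$ with $\E\,\$\hat Z_\eps\$_{\KK}^p\lesssim 1$ for every $p$ and every compact $\KK$, uniformly in $\eps\in(0,1]$; since the gain in \eqref{e:momWeak} is $|\tau|+\eta$ with $\eta>0$, the same argument furnishes control in a space of models of slightly higher regularity, so the family $\{\hat Z_\eps\}_\eps$ is tight in $\MM$.

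Next I would fix $\bar\eps\in(0,1]$ and show that $\hat Z_{\eps,\bar\eps}\to\hat Z_{\bar\eps}$ in law as $\eps\to0$, where $\hat Z_{\bar\eps}$ denotes the renormalised model built in \cite{WongZakai} from $\xi_{\bar\eps}\eqdef\rho_{\bar\eps}*\xi$. Two ingredients enter. The first is a central limit theorem: for fixed $\bar\eps$ the field $\zeta_{\eps,\bar\eps}$ is smooth, and as $\eps\to0$ it converges in law to $\xi_{\bar\eps}$ --- indeed the normalisation $\int\zeta(0)\zeta(z)\,dz=1$ and the exponential decay of the cumulants (Definition~\ref{def: exp decay condition on cumulants}) make the $n$-th cumulant of $\zeta_\eps(\varphi)$ of order $\eps^{|\s|(n/2-1)}$, hence vanishing for $n\ge3$ and converging to the white-noise covariance for $n=2$; this is exactly the input used in \cite{CLTKPZ}. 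The second is that the renormalisation constants defining $M_{\eps,\bar\eps}$, being built from $K$ and $\fC^{(\eps)}_n$, converge as $\eps\to0$ to the (finite, $\bar\eps$-dependent) constants renormalising $\xi_{\bar\eps}$ in \cite{WongZakai}: the same scaling shows the hyperedge contributions of $\fC^{(\eps)}_n$ with $n\ge3$ are $O(\eps^{|\s|(n/2-1)})\to0$ while $\fC^{(\eps)}_2\to\delta$, leaving precisely the Gaussian renormalisation. Since the renormalised-model map is jointly continuous in the (smooth) driving function and the renormalisation constants, the continuous mapping theorem gives $\hat Z_{\eps,\bar\eps}\to\hat Z_{\bar\eps}$ in law.

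It remains to assemble these facts. By the stronger bound \eqref{e:mombound} of Proposition~\ref{prop:mombound} and the Kolmogorov criterion of the first paragraph, $\E\,\$\hat Z_\eps;\hat Z_{\eps,\bar\eps}\$_{\KK}^p\lesssim\bar\eps^{\kappa p}$ for every compact $\KK$, uniformly in all $\eps$ small enough (depending on $\bar\eps$); by the second paragraph, $\hat Z_{\eps,\bar\eps}\to\hat Z_{\bar\eps}$ in law as $\eps\to0$ for each fixed $\bar\eps$; and by \cite{WongZakai}, $\hat Z_{\bar\eps}\to\hat Z$ in law as $\bar\eps\to0$, $\hat Z$ being the It\^o model characterised by \eqref{e:Itointegral}. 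Testing against a bounded Lipschitz functional $F$ on $\MM$ (for a metric of the form $\sum_n 2^{-n}(\$\cdot\,;\cdot\$_{\KK_n}\wedge1)$ over an exhausting sequence $(\KK_n)$ of compacts) and telescoping through $\hat Z_{\eps,\bar\eps}$ and $\hat Z_{\bar\eps}$, I would bound $|\E F(\hat Z_\eps)-\E F(\hat Z)|$ by the three terms $|\E F(\hat Z_\eps)-\E F(\hat Z_{\eps,\bar\eps})|$, $|\E F(\hat Z_{\eps,\bar\eps})-\E F(\hat Z_{\bar\eps})|$ and $|\E F(\hat Z_{\bar\eps})-\E F(\hat Z)|$: the first tends to $0$ as $\bar\eps\to0$ uniformly in $\eps$ small (from the moment bound and dominated convergence in $n$), the third tends to $0$ as $\bar\eps\to0$, and, $\bar\eps$ then being fixed, the middle tends to $0$ as $\eps\to0$. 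Choosing $\bar\eps$ and then $\eps$ appropriately yields $\E F(\hat Z_\eps)\to\E F(\hat Z)$, i.e.\ $\hat Z_\eps\to\hat Z$ in distribution in $\MM$.

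The analytic heart of the argument --- the moment bounds of Proposition~\ref{prop:mombound} --- is already in place, so what remains is mostly organisational. I expect the main obstacle to lie in the second paragraph: one must verify the central limit theorem and, more delicately, the convergence of the finitely many renormalisation constants, keeping track of the discrepancy between $P$ and $K$ (already seen above to produce $\eps\to0$-vanishing errors) and of the fact that these constants \emph{diverge} as $\bar\eps\to0$, in a manner that must exactly match the divergences of the model built from $\zeta_{\eps,\bar\eps}$. Concretely one has to check that the non-Gaussian renormalisations introduced for $\hat Z_\eps$ (the $\eps^{-1/2}$ terms and the finite $\fC_3,\fC_4$ corrections) degenerate, in the regime $\eps\ll\bar\eps\to0$, to exactly the Gaussian renormalisation of \cite{WongZakai}; it is this that pins the limit down as the It\^o model rather than some other admissible model.
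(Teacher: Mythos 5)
Your proposal is correct and follows essentially the same diagonal argument as the paper: telescoping through the intermediate model built from $\zeta_{\eps,\bar\eps}$, controlling the first leg with the second bound \eqref{e:mombound} of Proposition~\ref{prop:mombound}, the middle leg with the central limit theorem for $\zeta_\eps$ together with continuity of the (smooth noise, constants) $\mapsto$ model map, and the last leg with the Gaussian convergence result of \cite{WongZakai}. The only cosmetic differences are that the paper bounds the expected model distance $\E\,\$\hat Z_\eps, \hat Z\$$ directly under a coupling rather than testing bounded Lipschitz functionals, and the tightness step in your first paragraph is not actually needed.
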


\begin{proof}
 Since the topology on $\MM$ is
generated by the pseudometrics defined in Section~\ref{sec:Adm-model}, it is sufficient to show that 
\begin{equ}[e:wantedConv]
\lim_{\eps \to 0} \E \$\hat Z_\eps, \hat Z\$ = 0\;,
\end{equ}
where the compact domain that the pseudometric depends on is omitted in the notation since it does not matter much in the sequel.
We have
\begin{equ}
\E \$\hat Z_\eps, \hat Z\$
\le \E \$\hat Z_\eps, \hat Z_{\eps,\bar \eps}\$ + \E \$\hat Z_{\eps,\bar \eps}, \hat Z_{0,\bar \eps}\$ + \E \$\hat Z_{0,\bar \eps}, \hat Z\$\;,
\end{equ}
which is valid for any fixed $\bar \eps > 0$ and for any coupling between $\xi$ and $\zeta_\eps$, 
where $\hat Z_{\eps,\bar\eps}$
is the previously defined ``intermediate'' model,
and the model $\hat Z_{0,\bar\eps}$ is obtained again in the same
way as $\hat Z_{\eps,\bar\eps}$ but built from the noise $ \zeta_{0,\bar\eps} \eqdef \xi*\rho_{\bar\eps}$,
with renormalization constants defined in the same way as those for $\hat Z_{\eps,\bar\eps}$ except that every $\rho_{\bar\eps}$ is replaced by a delta function.
Therefore, 
\begin{equ}[e:threeTerms]
\lim_{\eps \to 0} \E \$\hat Z_\eps, \hat Z\$ \le \lim_{\bar \eps \to 0} \lim_{\eps \to 0} \bigl(\E \$\hat Z_\eps, \hat Z_{\eps,\bar \eps}\$ + \E \$\hat Z_{\eps,\bar \eps}, \hat Z_{0,\bar \eps}\$ + \E \$\hat Z_{0,\bar \eps}, \hat Z\$\bigr)\;.
\end{equ}

For the last term,
$\lim_{\bar \eps \to 0} \E \$\hat Z_{0,\bar \eps}, \hat Z\$^p = 0$
 for every $p$
was 
already shown in \cite{WongZakai}.
Note that since $\xi$ is Gaussian, all the renormalization constants
associated with the model $\hat Z_{0,\bar \eps}$ containing cumulants
of order higher than two vanish;
the other constants slightly differ from the corresponding ones in \cite{WongZakai}
because our constants for $\hat Z_{0,\bar \eps}$ are defined via the kernel $K$,
but this error vanishes as $\bar\eps\to 0$.

Regarding the first term,  by \cite[Thm~10.7]{Regularity}
and the second bound in Theorem~\ref{prop:mombound},
we obtain the bound
$\E \$\hat Z_{\eps}, \hat Z_{\eps,\bar \eps}\$ \lesssim \bar \eps^{\kappa}$
uniformly over $\eps$ sufficiently small, so that this term also vanishes.

It therefore remains to bound the second term. 
This follows from the same argument as in \cite{CLTKPZ}.
Firstly, one has a central limit theorem for the noise $\zeta_\eps$,
namely for every $\alpha<-\frac32$ the field $\zeta_\eps$ converges in law to space-time white noise $\xi$ in $\CC^\alpha(\R\times S^1)$, see \cite[Prop.~6.1]{CLTKPZ}. 
Therefore 
\begin{equ}[e:convC1]
\lim_{\eps \to 0} \E \|\zeta_{0,\bar \eps} - \zeta_{\eps,\bar \eps}\|_{\CC^1;\KK}^p = 0\;.
\end{equ}
for any bounded domain $\KK$.
Also, the map from 
the space of stationary and almost surely periodic noise
equipped with $L^p$ of $\CC^1$ norm
to the space of random admissible models is continuous.
Therefore one has
\begin{equ}
\lim_{\eps \to 0} \E \$\hat Z_{\eps,\bar \eps}, \hat Z_{0,\bar \eps}\$ = 0\;,
\end{equ}
for every fixed (sufficiently small) $\bar\eps>0$, so that the second term in 
\eqref{e:threeTerms}
also vanishes, thus concluding the proof.
\end{proof}

We finally collect all our results and prove the main theorem.

\begin{proof}[of Theorem~\ref{theo:main}]
By \cite[Thm~3.9]{WongZakai} or \cite[Sec.~7]{Regularity} 
one has for every $\eps>0$ a unique maximal solution $U^\eps$ to the fixed point problem \eqref{e:FP} in $\CD^{\gamma,0}$ with respect to the renormalized model $\hat \Pi^{(\eps)}$, and solution $U$ respect to the It\^o model $\hat \Pi$, and by \cite[Thm~4.7]{WongZakai} we have $\lim_{\eps \to 0} \mathbf P(\|U;U^\eps\|_{\gamma,0}>\eps^\theta)=0$ for every $\theta<\kappa/2$.

Using the identity
\begin{equ}
\bigl(\hat \Pi^{(\eps)}_{z} \tau\bigr)(z) = \bigl(\Pi^{(\eps)}_{z} M_\eps \tau\bigr)(z)\;,
\end{equ}
with $M_\eps$ defined above \eqref{e:choiceell},
together with \eqref{e:def-consts}, \eref{e:defL}, we see that this expression is non-zero
for the symbols $\1$, $\<Xi>$, $\<Xi2>$, $\<Xi3>$, $ \<Xi3b>$, $\<Xi4>$, $\<Xi4b>$, $\<Xi4c>$ and $\<Xi4e>$, where one has
\begin{equs}[0]
\bigl(\hat \Pi^{(\eps)}_{z} \1\bigr)(z) = 1\;,\quad
\bigl(\hat \Pi^{(\eps)}_{z} \<Xi>\bigr)(z) = \xi_\eps(z)\;,\quad
\bigl(\hat \Pi^{(\eps)}_{z} \<Xi2>\bigr)(z) = -C^{(1)}_{\zeta}  / \eps\;,\\
\bigl(\hat \Pi^{(\eps)}_{z} \<Xi3>\bigr)(z) = -C^{(2)}_{\zeta}  / \sqrt\eps\;,\quad
\bigl(\hat \Pi^{(\eps)}_{z} \<Xi3b>\bigr)(z) = -C^{(3)}_{\zeta}  / \sqrt\eps \;, \\
\bigl(\hat \Pi^{(\eps)}_{z} \<Xi4>\bigr)(z) = -c^{(2)}_\zeta \;,\quad
\bigl(\hat \Pi^{(\eps)}_{z} \<Xi4b>\bigr)(z) = -c^{(1)}_\zeta \;,\\
\bigl(\hat \Pi^{(\eps)}_{z} \<Xi4c>\bigr)(z) = -c^{(3)}_\zeta \;,\quad
\bigl(\hat \Pi^{(\eps)}_{z} \<Xi4e>\bigr)(z) = -c^{(4)}_\zeta \;.
\end{equs}

Write $\CR_\eps$ as the reconstruction map with respect to the renormalized model $\hat \Pi^{(\eps)}$.
Combining the above identities with \eref{e:RHS} 
it follows that one has the identity
\begin{equs}
\CR_{\eps}\bigl(\hat G(U)\sXi\bigr)(z) &= G(u(z))\xi_\eps(z) -  \frac{C^{(1)}_{\zeta}}{\eps}  G'(u(z))G(u(z)) \\
&- \frac{C^{(2)}_{\zeta}}{\eps^{1/2}} G'(u(z))^2 G(u(z)) - \frac{C^{(3)}_{\zeta}}{\eps^{1/2}} G''(u(z)) G^2(u(z))  \\
&- {1\over 6} c^{(1)}_{\zeta} G'''(u(z)) G^3(u(z))
- c^{(2)}_{\zeta} G'(u(z))^3G(u(z))  \\
&- \Big( {1\over 2} c^{(3)}_{\zeta}
+c^{(4)}_{\zeta}\Big) G''(u(z)) G'(u(z)) G^2(u(z))
\end{equs}
By Theorem~\ref{theo:diagonal}, the limiting model is the It\^o model;
it is proved in \cite[Corollary~6.3]{WongZakai} that $u=\CR U$ constructed with respect to the It\^o model is indeed the classical weak solution interpreted in the It\^o sense (\cite{MR876085,DPZ}) which concludes the proof.
\end{proof}

\appendix
\section{Cumulants and Wick products} \label{sec:app}

We define joint cumulant functions $\fC_n (z_1,\ldots,z_n) $ of a space-time centered random field $\zeta$.
Given any subset $A$ of space-time points we will define a 
joint cumulant function
$\fC(\bar A)$ of $\zeta$. The definition operates recursively in $|A|$, one sets 
\begin{equ} \label{e:mome2cumu}
\fC \big( A \big) 
\eqdef
\E 
\big( 
\prod_{z\in A} \zeta(z)
\big)
-
\sum_{
\substack{ \pi \in \CP(A)\\
\pi \not = \{A\}}
} 
\prod_{B\in\pi} \fC \big(B\big)\;
\end{equ}
where $\CP(A)$ denotes the set of all partitions of $A$. The key cumulant identity comes from moving all the cumulants of \eqref{e:mome2cumu} to the LHS.

We write $\fC_n$ for the $n$-th joint cumulant function of the field $\zeta$
at $n$ space-time points $A=\{z_1,\dots,z_n\}$: 
\begin{equ} \label{e:def-kappa}
	\fC_n (z_1,\ldots,z_n) 
		= \fC \big(A\big) \;.
\end{equ}
Note that $\fC_{1} = 0$ since $\zeta$ is assumed to be centered. We write $\fC_{n}^{(\eps)}$ for the $n$-th cumulant function of $\zeta_{\eps}$

\begin{definition}\label{def: exp decay condition on cumulants} We say that $\zeta$ has exponentially decaying cumulants if there exists $\theta \in (0,1)$ such that for each $n \in \N$ one has the bound
\begin{equ}\label{eq: exp decay condition on cumulants}
\left|
\fC_{n}(z_{1},\dots,z_{n})
\right|
\lesssim 
\theta^{\diam(z_{1},\dots,z_{n})}
\end{equ}
uniform in $z_{1},\dots,z_{n} \in \R^{d}$ where we set $
\diam(z_{1},\dots,z_{n}) 
\eqdef
\sup_{1 \le i,j \le n} |z_{i} - z_{j}|$.
\end{definition} 

We give an example of a random field $\zeta$ satisfying the above property.

\begin{example}
Let $\mu$ be a finite positive measure on $\CC(\R^{d})$ such that for every $\phi \in \supp(\mu)$ and $p > 1$ one has
\begin{equ}
\int \Big( \sup_{z\in\R^{d}} e^{2|z|_{\s}} |\phi(z)| \Big)^p\,\mu(d\phi) < \infty
\end{equ}
Denote by 
$\hat \mu$ a realisation of the Poisson point process  on
$\CC(\R^{d}) \times \R^{d}$ with intensity $\mu \otimes dz$ and set
\begin{equ}
\zeta(z) = \int \phi(z-z')\, \hat \mu(d\phi\otimes dz') - \int \int \phi(z)\,dz \mu(d\phi)\;.
\end{equ}
Then $\zeta$ satisfies Definition \ref{def: exp decay condition on cumulants}.
\end{example} 
We now give the definition of the Wick product of a generically non-Gaussian random field evaluated at a collection of points. 
\begin{definition} \label{def:Wick-product}
Given a collection of space-time points $A$,  
the Wick product $\Wick{\prod_{z\in A} \zeta(z)}$ is defined recursively in $|A|$ by setting $\Wick{1} = 1$ and then setting
\begin{equs} \label{e:defWick}
\Wick{\prod_{z\in A} \zeta(z)}
&\eqdef 
\prod_{z\in A} \zeta(z)
-
\sum_{B \subsetneq A}  
\Wick{\prod_{z\in B} \zeta(z)}  
\E\Big( \prod_{z\in A\setminus B} \zeta(z)\Big)   \\
&=
\prod_{z\in A} \zeta(z)
-
\sum_{B \subsetneq A}  \Wick{\prod_{z\in B} \zeta(z)}  
	\sum_{\pi \in \CP(A \setminus B)} 
	  \prod_{\bar B \in \pi}
		\fC(\bar B)  \;.
\end{equs}
\end{definition}

Note that $\Wick{\prod_{z\in B} \zeta(z)}  $ is understood as an operation on $\{\zeta(z)\}_{z \in B}$, not the product. 
The second line of \eqref{e:defWick} follows from the first by applying \eqref{e:mome2cumu}. 
Note that any non-empty Wick product is always mean zero.

The key Wick product identity comes from moving all the subtracted terms of on the RHS of the second line of \eqref{e:defWick} to the first line of the RHS.

We close with a lemma which is sometimes called ``diagram formula" in the literature \footnote{ See \cite{Surgailis1983,GiraitisSurgailis1986} or \cite{CLTKPZ}.} and generalizes \eqref{e:mome2cumu}. It states that moments of Wick products can be calculated by summing over partitions without ``self-contractions".

\begin{lemma} \label{lem:Wick-field}
For every $p\ge 1$ and $m\ge 1$, one has
\begin{equ} [e:Wick-field]
\E \Big( \prod_{k=1}^p 
	\Wick{ \zeta_\eps(x_k^{(1)}) \cdots \zeta_\eps(x_k^{(m)})}  \Big)
	= 	 \sum_{\pi \in \CP_M(M\times P)}  \prod_{B\in\pi} 
	\fC_\eps (B) 
\end{equ}
where $M\eqdef \{i: 1\le i\le m\}$, $P\eqdef \{k: 1\le k\le p\}$, and $\CP_M(M\times P)$ denotes the set of all partitions $\pi \in \CP(M\times P)$ with the property that for every $B \in \pi$ there exist $(i,k),(i',k')\in B$ such that $k\neq k'$ (in particular $|B|>1$).
\end{lemma}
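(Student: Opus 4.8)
The plan is to prove this ``diagram formula'' by a generating-function computation, following the standard approach (cf.\ \cite{CLTKPZ}). Introduce one formal variable $\lambda_z$ for each relevant space-time point $z$, write $\partial^B\eqdef\prod_{z\in B}\partial_{\lambda_z}$ for finite point-sets $B$, $e_B\eqdef\exp\big(\sum_{z\in B}\lambda_z\zeta_\eps(z)\big)$, and work in the ring of formal power series in the $\lambda_z$, allowing random coefficients with $\E$ acting coefficientwise. Since all cumulants of $\zeta_\eps$ exist, the series $\E\,e_B$ are well defined with constant term $1$, hence invertible; only monomials of degree at most $mp$ enter the single coefficient we extract below, so one may freely truncate and treat everything as polynomial.

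The first step is to record the generating function of a single Wick product: for any finite point-set $A$,
\[
\Wick{\prod_{z\in A}\zeta_\eps(z)}\;=\;\partial^A\!\left[\frac{e_A}{\E\,e_A}\right]\bigg|_{\lambda=0}\;.
\]
Indeed, writing $\mathcal W(A)$ for the right-hand side, the Leibniz rule applied to $\partial^A\!\big((e_A/\E e_A)\cdot\E e_A\big)=\partial^A e_A$ and evaluated at $\lambda=0$ yields $\prod_{z\in A}\zeta_\eps(z)=\sum_{B\subseteq A}\mathcal W(B)\,\E\big(\prod_{z\in A\setminus B}\zeta_\eps(z)\big)$, which together with $\mathcal W(\emptyset)=1$ is exactly the identity obtained by transposing the subtracted terms in \eqref{e:defWick}; since that identity determines the Wick products uniquely by induction on $|A|$, we conclude $\mathcal W(A)=\Wick{\prod_{z\in A}\zeta_\eps(z)}$.

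Next I would apply this with $A=\{x_k^{(1)},\dots,x_k^{(m)}\}$ for each $k$; as the $k$-th Wick product involves only copy-$k$ variables, multiplying over $k$, taking expectations, and pulling out the deterministic normalisations $E_{\lambda_k}\eqdef\E\,\exp\big(\sum_i\lambda_k^{(i)}\zeta_\eps(x_k^{(i)})\big)$ gives
\[
\E\Big(\prod_{k=1}^p\Wick{\zeta_\eps(x_k^{(1)})\cdots\zeta_\eps(x_k^{(m)})}\Big)=\partial^{M\times P}\!\left[\frac{E}{\prod_{k=1}^p E_{\lambda_k}}\right]\bigg|_{\lambda=0}\;,
\]
where $E\eqdef\E\,\exp\big(\sum_{(i,k)}\lambda_k^{(i)}\zeta_\eps(x_k^{(i)})\big)$. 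Put $G\eqdef\log E-\sum_{k=1}^p\log E_{\lambda_k}$. Expanding $\log E$ as a cumulant generating series, for $B\subseteq M\times P$ the coefficient of the squarefree monomial $\prod_{(i,k)\in B}\lambda_k^{(i)}$ equals $\fC_{|B|}$ at the points of $B$ in $\log E$, and equals the same quantity in $\sum_k\log E_{\lambda_k}$ precisely when $B$ lies inside a single copy; hence $\partial^B G\big|_{\lambda=0}=\fC_\eps(B)$ if $B$ meets at least two copies and $0$ otherwise. Finally, since $E/\prod_k E_{\lambda_k}=e^G$ and $G|_{\lambda=0}=0$, the standard exponential expansion gives
\[
\partial^{M\times P}\!\big[e^G\big]\big|_{\lambda=0}=\sum_{\pi\in\CP(M\times P)}\prod_{B\in\pi}\partial^B G\big|_{\lambda=0}=\sum_{\pi\in\CP_M(M\times P)}\prod_{B\in\pi}\fC_\eps(B)\;,
\]
which is \eqref{e:Wick-field}; taking $m=1$ recovers \eqref{e:mome2cumu} once singleton blocks are discarded using $\fC_1=0$.

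I expect the main obstacle to be the bookkeeping in the last step: keeping the formal-power-series manipulations honest (routine, by truncation to bounded degree) and, above all, checking that the cancellation $\log E-\sum_k\log E_{\lambda_k}$ annihilates exactly the ``self-contained'' blocks, which is the mechanism forcing the restriction to partitions without self-contractions. A generating-function-free alternative is an induction on $p$ applying the transposed \eqref{e:defWick} to the $p$-th Wick factor together with \eqref{e:mome2cumu}; there the same cancellation reappears as a telescoping signed sum over within-copy partitions, which seems less clean to present than the argument above.
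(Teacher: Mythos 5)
Your proof is correct. There is nothing in the paper to compare it against line by line: Lemma~\ref{lem:Wick-field} is stated as a known ``diagram formula'' with a pointer to \cite{Surgailis1983,GiraitisSurgailis1986} and \cite{CLTKPZ}, and no proof is given, so you are supplying the standard generating-function argument from that literature in self-contained form. The three steps all check out: the identity $\Wick{\prod_{z\in A}\zeta_\eps(z)}=\partial^A\bigl[e_A/\E e_A\bigr]\big|_{\lambda=0}$ holds because the Leibniz rule reproduces the recursion obtained by transposing \eqref{e:defWick}, which determines the Wick products uniquely by induction on $|A|$; since the $p$ factors involve disjoint sets of formal variables, the product of the evaluated derivatives is the single mixed derivative of $E/\prod_k E_{\lambda_k}=e^{G}$; and Fa\`a di Bruno's formula for $e^{G}$ with $G|_{\lambda=0}=0$, combined with the cancellation $\partial^B G|_{\lambda=0}=0$ for blocks $B$ contained in a single copy, produces exactly the restriction to $\CP_M(M\times P)$. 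Working with truncated formal power series, with $\E$ acting coefficientwise, is also the right move, since exponential moments of $\zeta_\eps$ need not exist. One small point you should make explicit: the paper defines $\fC_\eps$ through the recursion \eqref{e:mome2cumu}, not as coefficients of $\log E$, so the identification $\partial^B\log E|_{\lambda=0}=\fC_\eps(B)$ deserves a line of justification --- apply your final exponential expansion once to $E=e^{\log E}$ itself to see that these coefficients satisfy the same recursion \eqref{e:mome2cumu}, which determines them uniquely by induction on $|B|$. With that remark added the argument is complete; note also that it correctly handles coinciding space-time points, since each index pair $(i,k)$ carries its own formal variable.
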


\endappendix

\bibliographystyle{./Martin}
\bibliography{./refs}

\end{document}